\providecommand\@dotsep{5}
 \def\a{\alpha}
 \def\be{\beta}
 \def\e{\epsilon}
 \def\ve{\varepsilon}
 \def\ga{\gamma}
 \def\la{\lambda}
 \def\La{\Lambda}
 \def\re{{\mathbb R}}
 \def\ov{\overline}
 \def\SS{{\mathbb S}}
 \def\tq1{{\tilde{q}_1}}
 \def \ov{\overline}
 \DeclareMathOperator{\diam}{diam}
  \renewcommand{\proofname}{{\bf Proof:}}
\def\th@plain{%
  \thm@notefont{}% same as heading font
  \itshape % body font
}
\def\th@definition{%
  \thm@notefont{}% same as heading font
  \normalfont % body font
}
 \theoremstyle{plain}
 \newtheorem{Thm}{Theorem}[section]
 \newtheorem{Lemma}[Thm]{\bf Lemma}
 \newtheorem{Corollary}[Thm]{\bf Corollary}
 \newtheorem{Theorem}[Thm]{\bf Theorem}
 \newtheorem{Proposition}[Thm]{\bf Proposition}
 \newtheorem{Kerekjarto}[Thm]{\bf Ker\'ekj\'art\'o Theorem}
 \newtheorem{claim}[Thm]{\it Claim}
 \theoremstyle{definition}
 \theoremstyle{remark}
 \newtheorem{Remark}[Thm]{\bf Remark}
 \newtheoremstyle{Cl}% name
  {5pt}%      Space above
  {3pt}%      Space below
  {\sl}%   Body font
  {}%         Indent amount (empty = no indent, \parindent = para indent)
  {\it}% Thm head font
  {:}%        Punctuation after thm head
  {.5em}%     Space after thm head: " " = normal interword space;
 \theoremstyle{Cl}
 \def\begincproof{
                  \renewcommand{\proofname}{\it Proof:}
                  \begin{proof}
                 }
 \def\endcproof{
                \renewcommand{\qedsymbol}{$\diamondsuit$}
                \end{proof} 
                \renewcommand{\qedsymbol}{\openbox}
                \renewcommand{\proofname}{\bf Proof:}
               }
\newcommand{\customlabel}[2]{%
   \protected@write \@auxout {}{\string \newlabel {#1}{{#2}{\thepage}{#2}{#1}{}} }%
   \hypertarget{#1}{#2}
}
\def\ted{\hfill$\triangle$}
 \title[The ideal boundary]{The Ideal Boundary and the Accumulation Lemma} 
\author{Fernando Oliveira}
\address{Fernando Oliveira\newline\indent 
Universidade Federal de Minas Gerais\newline\indent
Av. Ant\^onio Carlos 6627, 
31270-901, Belo Horizonte,
MG, Brasil.}
\author{Gonzalo Contreras}
\address{Gonzalo Contreras\newline\indent 
Centro de Investigaci\'on en Matem\'aticas\newline\indent 
A.P. 402, 36.000, Guanajuato, GTO, Mexico}
\thanks{Gonzalo Contreras is partially supported by CONACYT, Mexico, grant A1-S-10145.}
\subjclass[2020]{37E30, 57K20}
\begin{document}

\parskip +3pt

\begin{abstract}
Let $S$ be a connected surface possibly with boundary, $\mu$ a finite Borel measure which is positive
on open sets and $f:S\to S$ a homeomorphism preserving $\mu$.
We prove that if $K$ is an $f$-invariant  compact connected subset of $S$ and $L$ is a branch 
of a hyperbolic periodic point of $f$ then $L\cap K\ne\emptyset$ implies $L\subset K$.
This is called the accumulation lemma.
For this we develop a classification of connected surfaces with boundary and 
a characterization of residual domains of compact subsets with finitely many 
connected components in a connected surface with boundary. 
\end{abstract}

\maketitle

\tableofcontents

%\listoftodos{}

\section{Introduction.}

A fundamental result in the modern theory of the dynamics of  area preserving
maps is the {\it Accumulation Lemma} which asserts that if $K$ is a compact connected
invariant set and $L$ is a branch of a hyperbolic periodic point $p$ (i.e. a connected
component of $W^s(p)-\{p\}$ or $W^u(p)-\{p\}$) then $L\cap K\ne\emptyset$
implies that $L\subset K$. 

The accumulation lemma is used to prove that for generic area preserving maps
all the branches of a hyperbolic periodic points have the same closure.
This fact in turn allows to obtain certain homoclinic orbits. For example a version
of the accumulation lemma in~\cite{CM2} provided homoclinic orbits which permitted
to prove the existence of Birkhoff sections for all Kupka-Smale Reeb flows.

The accumulation lemma was proved by John Mather in Corollary~8.3 of~\cite{Mat9}
for 
\linebreak
connected surfaces without boundary. A proof for the 2-sphere appears in 
Franks, Le Calvez~\cite{FLC} Lemma~6.1.
Here we present a proof of the accumulation lemma for connected surfaces with
boundary and partially defined maps  in Theorem~\ref{P22}.

We need the version for surfaces with boundary and partially defined area preserving maps 
in order to be able to apply the lemma
to return maps of Birkhoff sections of Reeb flows and holonomy maps on broken book
decompositions of contact 3-manifolds. The lemma allows in~\cite{OC1} to obtain homoclinics 
for Kupka-Smale area preserving maps of surfaces with boundary and in~\cite{CO3}
 to prove that Kupka-Smale geodesic flows of surfaces have homoclinic 
in every hyperbolic closed geodesic.

If $S$ is a surface with boundary and $K\subset S$ is a compact connected set, 
the connected components of $S-K$ are called residual domains of $K$.
The proof of the accumulation lemma uses the topology of residual domains,
so we need to extend the representation of connected surfaces by 
Ker\'ekj\'art\'o~\cite{kerekjarto} and Richards~\cite{Rich}
 to connected surfaces with boundary which we do in Proposition~\ref{P11}.
 
 The ideal boundary points of a surface are also known as topological ends.
 We use the former terminology to distinguish them from the prime end
 compactification of a surface which is also needed in the study of dynamics
 of area preserving maps. The ideal boundary gives a description of the way 
 in which compact subsets divide a complete  surface into unbounded components.
 
The proof of the accumulation lemma for subsets of the sphere $\SS^2$ in \cite[6.1]{FLC}
is quick because in $\SS^2$ a residual domain of a compact connected set is simply connected. 
For general surfaces the proof can be localized but requires the study of  the ideal boundary compactification of residual domains.

We see in Proposition~\ref{P12} that a relatively compact ideal boundary point
of a residual domain 
 has a neighborhood in the ideal boundary compactification which is homeomorphic
 to a disk in $\re^2$. This provides a proof of Proposition~1.1 in Mather~\cite{Mat9}
where it is attributed as a consequence of the main result in Richards~\cite{Rich}.
This is a key element to prove the accumulation lemma.

We use exhaustions from Ahlfors and Sario~\cite{AhSa} to obtain a method for
computing ideal boundary points in Proposition~\ref{P9}. 
We also construct a canonical exhaustion for
\linebreak
 residual domains on manifolds
with boundary in subsection \S~\ref{canonical} that we use to prove the accumulation  
lemma~\ref{P22}.
This technique allows a more elementary exposition.

As a  consequence of the methods developed to deal with the ideal boundary of residual domains
 we obtain a characterization of residual domains on surfaces. 
In Corollary~\ref{C18} we prove that for an open subset $U$ of a connected surface $S$:
its frontier in $S$, $fr_SU$, is a compact set with finitely many components if and only if 
$U$ is a residual domain of a compact set $K$ which has finitely many connected components.
We also see that this result is not valid in other topological spaces $S$ which are not surfaces.

\section{Notation and definitions}

If $B$ is a topological space and $A\subset B$, we use the notation
$int_BA$, $cl_BA$ and $fr_BA$ for the interior, closure and the frontier of $A$ in $B$, respectively.
The boundary of a surface will be denoted by $\partial S$ and we will use $S^\circ= S-\partial S$ 
for its interior as a manifold. By {\it domain} we mean a connected open subset of $S$. 
If $A$ is a closed subset of $S$, a connected component of $S-A$ is called a {\it residual domain} of $A$.

A set $K\subset S$ is {\it relatively compact} if its closure $cl_S K$ is compact.
Observe that a residual domain of a compact subset  $K\subset S$ 
is relatively  compact if and only if it is bounded for some complete
metric on $S$.

By a closed (open) disk we mean a set homeomorphic to a closed (open) ball in the plane.

\section{The ideal boundary.}

Let $S$ be a non-compact connected surface
with a complete metric.
We are going to describe a compactification of $S$
by the addition of its (topological) ends or ideal boundary points.
See \cite{AhSa}, \cite{Mat9}, \cite{Rich}.

An {\it ideal boundary component} of $S$ is a decreasing sequence 
$V_1\supset V_2\supset \cdots$ of non-empty subsets of $S$ such that:
\begin{enumerate}
\item \label{c1}$V_n$ is open in $S$.
\item \label{c2} $V_n$ is connected.
\item \label{c3} $cl_SV_n$ is not compact.
\item \label{c4} $fr_S V_n$ is compact.
\item \label{c5} If $K$ is a compact subset of $S$, then there is $n_0$ 
such that $K\cap V_n=\emptyset$ for $n\ge n_0$. 
\newline
We will refer to this property briefly by saying that the sequence 
$(V_n)$  {\it leaves 
\linebreak
compact subsets of $S$.}
%\item\label{c5} 
%$
%K\subset S, \quad K  \text{ compact } \then\quad  \exists n_0 \quad \forall n\ge n_0 \quad K\cap V_n=\emptyset.
%$
%\newline
%We will refer to this property briefly by saying that the sequence 
%$(V_n)$  {\it leaves 
%\linebreak compact subsets of $S$.}
\end{enumerate}

Two ideal boundary components $V_1\supset V_2\supset\cdots$ and
 $V_1'\supset V_2'\supset\cdots$
are equivalent if for every $n$ there is $m$ such that $V_m\subset V_n'$ and vice versa.
An {\it ideal boundary point} is an equivalence class of ideal boundary components.
The set of  ideal boundary points $b(S)$ is called the {\it ideal boundary} of $S$ and the
disjoint union $B(S)=S\sqcup b(S)$ is called the {\it ideal completion} of $S$. 
The ideal boundary gives a description of the way in which compact subsets 
of $S$ divide $S$ into unbounded (i.e non relatively compact) residual domains.

\begin{Remark}\label{R1}\sl
It follows from conditions \eqref{c1} through \eqref{c5} of the definition of an ideal 
boundary component $(V_n)$ that the condition 
\begin{enumerate}
\item[\rm(\customlabel{c5p}{$\ref{c5}'$}\!\!)]
\quad
$\cap_n\, cl_S V_n = \emptyset$.
\end{enumerate}
holds true.
In fact, we could have replaced condition \eqref{c5} by 
\eqref{c5p} 
and have an equivalent definition of ideal boundary components.
It is not true that condition \eqref{c5} is equivalent to $\cap_n V_n=\emptyset$,
as we have seen stated elsewhere.

 Consider polar coordinates $(\theta,r)$ on $S=\re^2-\{(0,0)\}$ and define 
 $$
 V_n=\{(\theta,r)\in S\,:\, r<\tfrac 1n\}\cup\{(\theta,r)\in S\, :\, r<2 ,\; 
 0<\theta<\tfrac 1n\}.
 $$
 Then $(V_n)$ is a decreasing sequence of sets that satisfy 
$\cap_n V_n=\emptyset$, \eqref{c1}, \eqref{c2}, \eqref{c3} and \eqref{c4}
but not \eqref{c5}, because
$$
\cap_n cl_S V_n=\{(\theta,r)\in S\,:\, 0<r\le 2, \;
\theta=0\,\}
$$
and $V_n$ does not leave circles centered at $(0,0)$  and radius $r$ 
with $0<r< 2$.
\end{Remark}

\begin{Remark}\label{R2}
For an ideal boundary component $(V_n)$ it is easy to see that 
for every $n$ there exists $m>n$ such that $fr_S V_m\cap fr_SV_n=\emptyset$.
From this and $V_m\subset V_n$ it follows that 
$cl_S V_m\subset V_n$. Therefore, by taking a subsequence of $(V_n)$, 
we have that $(V_n)$ is equivalent to an ideal 
boundary component $(W_n)$ for which 
$cl_SW_{n+1}\subset W_n$ for every $n$.
\end{Remark}

Let $A$ be an open connected subset of $S$ with $fr_SA$ compact 
and let $A'$ be the set of ideal boundary points of  $S$
whose representing ideal boundary components $(V_n)$ 
satisfy $V_n\subset A$ for sufficiently large $n$.
The collection of all sets $A'$ where $A$ is an open connected
subset of $S$ with $fr_SA$ compact, forms a basis for a topology on $b(S)$.
With this topology $b(S)$ is a compact totally disconnected space
(cf. \cite[ch.~1, \S 36--37]{AhSa}).
On $B(S)$ we use the topology with basis the sets $A^*=A\cup A'$,
where $A\subset S$ is open connected with $fr_S A$ compact.

By a compactification of $S$ we mean a compact topological space $M$
that contains $S$ as an open and dense subset.

A characterization of $B(S)$ as a compactification of $S$ is given by the 
following result:

\begin{Proposition}\quad\label{P3}

The set $B(S)$ is a compactification of $S$ that satisfies the following properties:
\begin{enumerate}
\item\label{p31} $B(S)$ is a locally connected Hausdorff space.
\item\label{p32} $b(S)$ is totally disconnected.
\item\label{p33} $b(S)$ is non separating on $B(S)$, \\
(i.e. if $V\subset B(S)$ is open and connected then $V-b(S)$ is connected).
%(meaning that for any open connected subset $V$ of $B(S)$, $V-b(S)$ is connected)
\end{enumerate}
If $M$ is another compactification of $S$ satisfying these properties, 
then there exists a 
homeomorphism from $M$ onto $B(S)$ which is the identity on $S$.
\end{Proposition}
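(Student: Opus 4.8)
The plan is to verify the three structural properties of $B(S)$, which are mostly formal once the definition is unwound, and then to prove the characterization by constructing an explicit homeomorphism $M\to B(S)$ that fixes $S$; I expect essentially all of the difficulty to lie in the latter. For the former: compactness of $B(S)$ is part of the classical construction of the end compactification (see \cite{AhSa}); it can also be recovered from compactness of $b(S)$, by covering $b(S)$ with finitely many basic sets $A_i^{*}=A_i\cup A_i'$ and noting that the uncovered part $S-\bigcup_iA_i$ is relatively compact — otherwise a K\"onig argument on the complementary components of a nice exhaustion (whose complements have finitely many components) would produce an ideal boundary point lying in no $A_i'$. The Hausdorff property is routine: distinct points of $S$ are separated inside $S$; a point of $S$ and an ideal point $[(V_n)]$ are separated by $S-cl_SV_n$ and $V_n^{*}$ for $n$ large, using Remark~\ref{R2} and the leaving-compacts condition; two distinct ideal points, represented by $(V_n)$ and $(W_n)$, are separated by disjoint basic sets $V_a^{*}$ and $W_b^{*}$ for suitable $a,b$, using non-equivalence together with the fact that a connected set avoiding the compact frontier $fr_SV_a$ must lie inside $V_a$ or outside $cl_SV_a$. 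Local connectedness reduces to connectedness of each basic $A^{*}=A\cup A'$: in a splitting $A^{*}=P\sqcup Q$ one of $P,Q$ is disjoint from the connected set $A$, hence is a nonempty open subset of $B(S)$ disjoint from the dense set $S$, which is impossible. Total disconnectedness of $b(S)$: for a representative with $cl_SV_{n+1}\subset V_n$, the set $(V_n)'$ is clopen in $b(S)$ with complement $(S-cl_SV_n)'$, since any ideal boundary component, having compact frontier and leaving compact sets, is eventually contained in $V_n$ or in $S-cl_SV_n$. For non-separation, given $V\subset B(S)$ open connected and $V\cap S=P\sqcup Q$ relatively open, each ideal point of $V$ has a connected basic neighborhood inside $V$ whose trace on $S$ is connected and so lies in $P$ or in $Q$; grouping the points of $V$ accordingly gives an open splitting of $V$ (the two pieces are disjoint, again by density of $S$), so connectedness of $V$ forces $V\cap S$ connected.

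For the characterization, fix an exhaustion $K_1\subset int_SK_2\subset K_2\subset\cdots$ of $S$ by compact sets with $\bigcup_nK_n=S$, and write $\partial M=M-S$; since $M$ is a compactification of $S$ with the three stated properties, $\partial M$ is compact and totally disconnected and $M$ is locally connected. To each $p\in\partial M$ I attach an ideal boundary point of $S$: let $W_n(p)$ be the connected component of $p$ in the open set $M-K_n$; then $W_n(p)$ is open, and its trace $W_n(p)\cap S=W_n(p)-\partial M$ is nonempty (density) and connected (non-separation), hence contained in a single component $U_n(p)$ of $S-K_n$. One checks that $U_n(p)$ is the unique component of $S-K_n$ whose $M$-closure contains $p$; that $cl_SU_n(p)$ is non-compact (else $cl_MU_n(p)=cl_SU_n(p)\subset S$ could not contain $p$); that $fr_SU_n(p)\subset K_n$ is compact; that $U_{n+1}(p)\subset U_n(p)$; and that $(U_n(p))_n$ leaves compact subsets of $S$, since each such set lies in some $K_N$. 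Thus $(U_n(p))_n$ is an ideal boundary component; set $h(p)=[(U_n(p))_n]$ for $p\in\partial M$, and let $h$ be the identity on $S$.

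To see $h$ is a homeomorphism, the key lemma is: if $C_1\supset C_2\supset\cdots$ with $C_n$ a component of $S-K_n$, $cl_SC_n$ non-compact and $\bigcap_ncl_SC_n=\emptyset$, then $\bigcap_ncl_MC_n$ is a single point of $\partial M$ — the intersection is nonempty by compactness of $M$, lies in $\partial M$ because $cl_MC_n\cap S=cl_SC_n$ (as $S$ is open in $M$), and is connected as a nested intersection of continua, hence is a singleton since $\partial M$ is totally disconnected. Applying this to $(U_n(p))_n$, whose $S$-closures meet in $\emptyset$ by Remark~\ref{R1}, and using $p\in cl_MU_n(p)$, yields $\bigcap_ncl_MU_n(p)=\{p\}$. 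Hence $h$ is injective on $\partial M$ (equal images force $U_n(p)=U_n(p')$ for each $n$, two components of $S-K_n$ with one inside the other being equal, so $p=p'$ lies in the singleton $\bigcap_ncl_MU_n(p)$) and surjective onto $b(S)$ (given $\eta\in b(S)$, the components $V_n^{\eta}$ of $S-K_n$ that eventually contain the representatives of $\eta$ form a nested sequence representing $\eta$, and the point $p=\bigcap_ncl_MV_n^{\eta}$ satisfies $U_n(p)=V_n^{\eta}$ by the uniqueness clause, so $h(p)=\eta$). Continuity of $h$ at a point of $S$ is immediate, since $S$ is open in both spaces. At $p\in\partial M$, take a basic neighborhood $A^{*}$ of $h(p)$ with $A$ open connected and $fr_SA$ compact; since $h(p)\in A'$ we have $U_n(p)\subset A$ for all large $n$, so pick $n$ with both $U_n(p)\subset A$ and $fr_SA\subset K_n$. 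Then $W_n(p)$ is a neighborhood of $p$, $W_n(p)\cap S\subset U_n(p)\subset A$, and for every $q\in W_n(p)\cap\partial M$ and every $m\ge n$ the component $U_m(q)$ contains the nonempty open set $W_m(q)\cap S\subset W_n(p)\cap S\subset A$ while being disjoint from $fr_SA\subset K_m$, hence $U_m(q)\subset A$ and $h(q)\in A'$; therefore $h(W_n(p))\subset A\cup A'=A^{*}$. A continuous bijection from the compact space $M$ onto the Hausdorff space $B(S)$ is a homeomorphism, and it is the identity on $S$ by construction.

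I expect the delicate part to be the characterization — recovering from an abstract point $p\in\partial M$ a single, well-defined ideal boundary point of $S$. An arbitrary connected neighborhood of $p$ in $M$ may be large and need not leave compact subsets of $S$, so one cannot read off a fundamental system of small sets directly; the fix is to cut neighborhoods by a fixed exhaustion, using non-separation to keep $W_n(p)\cap S$ connected (so that it selects exactly one complementary component $U_n(p)$) together with total disconnectedness of $\partial M$ (to collapse the nested intersection $\bigcap_ncl_MC_n$ to a point). The crux is getting these two uses of the hypotheses to match: the component singled out by small neighborhoods of $p$ has to be exactly the one whose $M$-closure contains the point produced by the intersection.
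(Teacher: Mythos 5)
The paper does not give its own proof of Proposition~\ref{P3}: it simply cites sections~36--37 of chapter~1 of Ahlfors--Sario~\cite{AhSa}. Your proof is a correct, self-contained argument, and it follows the route one would expect: verify the three structural properties directly from the basis $\{A^{*}\}$, then construct the comparison homeomorphism by intersecting small connected neighborhoods of a point $p\in M-S$ with a fixed compact exhaustion of $S$ to extract an ideal boundary component $(U_n(p))$. The decisive steps are all present and sound: non-separation guarantees that $W_n(p)\cap S$ is connected and hence selects a single complementary component $U_n(p)$; local connectedness makes $W_n(p)$ open; compactness of $M$ plus total disconnectedness of $M-S$ collapses $\bigcap_n cl_M U_n(p)$ to a singleton; and the "uniqueness clause" (that $U_n(p)$ is the only component of $S-K_n$ whose $M$-closure contains $p$, which follows because two components of $M-K_n$ that meet are equal) is exactly what ties the forward and inverse constructions together and yields both injectivity and surjectivity. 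Your use of Remark~\ref{R1} to ensure $\bigcap_n cl_S U_n(p)=\emptyset$, so that the nested intersection lands in $M-S$, is the right invocation. The final "continuous bijection from compact to Hausdorff" step closes the argument cleanly.

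Two small points worth tightening if this were to be written out in full. First, the compactness of $B(S)$ is only sketched (the K\"onig argument on components of $S-F_n$), and it presupposes compactness of $b(S)$, which the paper asserts with the same reference; as written this part is not yet a proof. Second, in the total-disconnectedness and Hausdorff arguments you use sets of the form $(S-cl_S V_n)'$; since $S-cl_S V_n$ need not be connected it is not literally a basic set, so one should note that it is a union of components each of which is open, connected, with compact frontier contained in $fr_S V_n$, hence $(S-cl_S V_n)'$ is a union of basic sets and therefore open. Neither issue affects the correctness of the overall approach.
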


For a proof see sections~36 and~37 of chapter~1 of~\cite{AhSa}.
Later we will provide a good description of $B(S)$. 
Sometimes we will just write $b=(V_n)$ to say that $b$
is an ideal boundary point of $S$ represented by the ideal boundary
component $(V_n)$. Let $V^*_n=V_n\cup V'_n$.
Then $(V'_n)$ and $(V^*_n)$ are fundamental systems of neighborhoods
for $b$ in $b(S)$ and in $B(S)$  respectively.

Now we  describe a result of Ker\'ekj\'art\'o that gives necessary and sufficient
conditions for non-compact surfaces to be homeomorphic.

Let $b\in b(S)$ be represented by an ideal boundary component $(V_n)$.
We say that $b$ is {\it planar} if $V_n$ is homeomorphic to a subset of the plane 
for all sufficiently large $n$. We say that $b$ is {\it orientable} if $V_n$ is orientable
for all sufficiently large $n$. Let 
\begin{align*}
b'(S)&=\{\text{non planar ideal boundary points of $S$}\},
\\
b''(S)&=\{\text{non orientable ideal boundary points of $S$}\}.
\end{align*}
Clearly $b''(S)\subset b'(S)$.

We say that the surface $S$ has {\it finite genus} if there exists 
a compact bordered surface $K$ contained in $S$
such that $S-K$ is homeomorphic to a subset of the plane.
In this case the genus of $S$ is defined to be the genus of $K$.
Otherwise we say that $S$ has {\it infinite genus}.
The genus of a connected surface with boundary can also be defined 
as the maximum number of disjoint simple closed curves that 
can be embedded in the interior of the surface without disconnecting it.

\begin{Remark}\label{R4}\quad 

\sl The surface with boundary $S$ has finite genus if and only if 
every ideal boundary point of $S$ is planar.
\end{Remark}

Now we are going to define four types of orientability for a non compact surface.

Assume that the surface  $S$ is not orientable. 
We say that $S$ is {\it finitely non orientable} if there exists a compact bordered 
surface $K$ contained in $S$ such that $S-K$ is orientable. Otherwise we say
that $S$ is {\it infinitely non orientable}.
Every compact non orientable surface is the connected sum of a compact orientable
surface and one or two projective planes.
If $S$ is finitely non orientable and $S-K$ is orientable, we say that $S$ is of {\it odd}
or {\it even} non orientability type according to $K$ 
being the connected sum of a compact orientable
surface and one or two projective planes, respectively.

\begin{Kerekjarto}\label{kerekjarto}\quad

Let $S_1$ and $S_2$ be two non compact connected surfaces
which have the same genus and the same orientability type.
Then $S_1$ and $S_2$ are homeomorphic if and only if 
there exists a homeomorphism of $b(S_1)$ onto $b(S_2)$,
such that $b'(S_1)$ and $b''(S_1)$ are mapped onto
$b'(S_2)$ and $b''(S_2)$ respectively.

\end{Kerekjarto}

This is an old result of Ker\'ekj\'art\'o and a complete proof appears in 
Theorem~1 of Richards~\cite{Rich}.

If $K$ is a compact totally disconnected subset of a compact surface $M$,
it is not difficult to show that $b(M-K)$ is homeomorphic to $K$
(for instance, the final part of the proof of Theorem~2 of \cite{Rich} 
contains a detailed proof of this fact).
From this, it follows that $B(M-K)$ is homeomorphic to $M$.
Clearly $b'(M-K)=b''(M-K)=\emptyset$.

Any compact, separable, totally disconnected space $X$ is homeomoprhic
to a subset of the Cantor ternary set. Therefore the same happens 
with the ideal boundary of any non compact surface.

\begin{Proposition}\label{P6}\quad

Let $S$ be a non compact connected surface of finite genus.
Then there exists a compact 
surface $M$ which contains a totally
disconnected compact subset $K$ such that $S$ is 
\linebreak
homeomorphic
to $M-K$ and $b(S)$ is homeomorphic to $K$.
\end{Proposition}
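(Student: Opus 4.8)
The plan is to reduce the statement to the Ker\'ekj\'art\'o theorem, using the facts recorded just before it: for a compact, totally disconnected subset $K$ of a compact surface $M$ one has $b(M-K)\cong K$, $B(M-K)\cong M$ (so in particular $M-K$ is connected) and $b'(M-K)=b''(M-K)=\emptyset$. First I would fix the invariants of $S$. Since $S$ has finite genus, Remark~\ref{R4} shows that every ideal boundary point of $S$ is planar, hence $b'(S)=\emptyset$, and as $b''(S)\subset b'(S)$ also $b''(S)=\emptyset$. Let $g$ be the genus of $S$. By the definition of finite genus there is a compact bordered surface $K_0\subset S$ with $S-K_0$ homeomorphic to a planar set; being planar, $S-K_0$ is orientable, so $S$ is either orientable or finitely non orientable, in the latter case of odd or even type. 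In each of these cases the classification of compact surfaces provides a \emph{closed} connected surface $M$ having the same genus $g$ and the same orientability type as $S$ (namely $\Sigma_g$, or the connected sum of $\Sigma_g$ with one or two projective planes).

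Next I would realize a copy of $b(S)$ inside $M$. Since $S$ is non compact, $b(S)\ne\emptyset$, and $b(S)$ is compact, totally disconnected and separable, so by the observation preceding this proposition it embeds in the Cantor ternary set, hence in a coordinate disk of $M$; let $K\subset M$ be the resulting totally disconnected compact set homeomorphic to $b(S)$. Then, as recalled above, $M-K$ is connected, and it is non compact because $K\ne\emptyset$. Moreover deleting the totally disconnected set $K$ changes neither the genus nor the orientability type of $M$: disjoint simple closed curves, or a compact bordered subsurface, realizing one of these invariants in $M$ can be adjusted to avoid $K$ (the curves by a small isotopy, a subsurface by deleting from it finitely many small open disks whose boundary circles miss $K$ --- possible because $K$ is totally disconnected), and conversely, if a family of disjoint curves has connected complement in $M-K$, then its complement in $M$ is connected too, since the former complement is connected and dense in $M$ while the latter lies between it and $M$. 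Hence $M-K$ is a non compact connected surface with genus $g$ and the orientability type of $S$, and by the facts recalled before this proposition $b(M-K)\cong K\cong b(S)$ while $b'(M-K)=b''(M-K)=\emptyset$.

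Finally I would apply the Ker\'ekj\'art\'o theorem to $S$ and $M-K$: these are non compact connected surfaces of the same genus and orientability type, there is a homeomorphism $b(S)\to b(M-K)$ since both are homeomorphic to $K$, and it carries $b'(S),b''(S)$ onto $b'(M-K),b''(M-K)$ vacuously because all four sets are empty. Therefore $S$ is homeomorphic to $M-K$, and this homeomorphism identifies $b(S)$ with $b(M-K)\cong K$, which is exactly the assertion. The one point requiring genuine care is the claim used above that excising a compact, totally disconnected subset from a closed surface preserves connectedness, the genus and the orientability type; this rests on the fact that such a subset does not separate a surface (which is built into the statements recalled before this proposition) together with the elementary adjustments of curves and subsurfaces off $K$ indicated above. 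Everything else is a direct assembly of Remark~\ref{R4}, the embeddability of $b(S)$ in the Cantor set, the description of $b(M-K)$, and the Ker\'ekj\'art\'o theorem.
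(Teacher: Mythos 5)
Your proof is correct and follows essentially the same route as the paper: establish $b'(S)=b''(S)=\emptyset$, embed a copy of $b(S)$ as a compact totally disconnected set $K$ in a compact surface $M$ of the same genus and orientability type as $S$, and invoke the Ker\'ekj\'art\'o theorem together with the recalled fact $b(M-K)\cong K$. You supply more detail than the paper's three-line proof --- in particular the verification that deleting $K$ leaves a connected surface of the same genus and orientability type, which the paper leaves implicit --- but the argument is the same.
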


\begin{proof}
We have that $b'(S)=b''(S)=\emptyset$. 
Let $M$ be a compact surface of the same genus
and orientability type as $S$.
The Cantor ternary set can be embedded in $M$ and therefore 
there exists a subset $K$ of $M$ homeomorphic to $b(S)$.
It follows from Ker\'ekj\'art\'o Theorem~\ref{kerekjarto} 
that $S$ is homeomorphic to $M-K$.

\end{proof}

From Proposition~\ref{P6} we inmediately have:

\begin{Proposition}\label{P7}\quad

The ideal completion $B(S)$ of a non compact 
connected surface is a surface if and only if $S$
has finite genus.
\end{Proposition}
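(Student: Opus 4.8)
The plan is to prove both implications by combining Proposition~\ref{P6} with the elementary fact that a compact surface is a surface (obviously) and, conversely, that the ideal completion always carries a complete metric only when $b(S)$ embeds as the "missing set" of a genuine surface. First I would dispose of the trivial case: if $S$ is compact there is nothing to prove since then $S=B(S)$ and $S$ is a surface of finite genus. So assume $S$ is non-compact and connected.

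For the "if" direction, assume $S$ has finite genus. By Proposition~\ref{P6} there is a compact surface $M$ containing a totally disconnected compact subset $K$ with a homeomorphism $h:S\to M-K$ carrying $b(S)$ homeomorphically onto $K$ in the sense made precise in the discussion preceding Proposition~\ref{P6}: indeed, for a compact totally disconnected $K\subset M$ one has $b(M-K)$ homeomorphic to $K$ and $B(M-K)$ homeomorphic to $M$. Since the characterization in Proposition~\ref{P3} shows that $B(\,\cdot\,)$ depends only on the topology of the surface and is determined up to a homeomorphism fixing $S$, the homeomorphism $h$ extends to a homeomorphism $B(S)\to B(M-K)\cong M$. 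Hence $B(S)$ is homeomorphic to the compact surface $M$, so $B(S)$ is a surface.

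For the "only if" direction I would argue by contraposition: suppose $S$ has infinite genus; I must show $B(S)$ is not a surface. By Remark~\ref{R4}, infinite genus means $S$ has a non-planar ideal boundary point $b=(V_n)$. The point $b\in B(S)$ has the fundamental system of neighborhoods $(V_n^*)$, where $V_n^*=V_n\cup V_n'$. If $B(S)$ were a surface, then $b$ would have a neighborhood homeomorphic to an open disk (or half-disk) in the plane, hence a planar neighborhood; intersecting with $S$, this would force $V_n$ to be homeomorphic to a planar set for large $n$, since $V_n^*\cap S=V_n$ and an open subset of a planar surface is planar. That contradicts $b$ being non-planar. Therefore $B(S)$ cannot be a surface, completing the contrapositive.

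The main obstacle is the careful bookkeeping in the "only if" direction: one must be sure that a plane neighborhood of $b$ in $B(S)$ really does restrict to a plane neighborhood of the end inside $S$, and that "$V_n$ planar for all large $n$" is exactly the negation of $b\in b'(S)$. The key point is that the trace topology on $S\subset B(S)$ is the original topology of $S$ and that $V_n^*\cap S = V_n$, so any Euclidean chart around $b$ in $B(S)$ yields, after removing the at most one boundary-type point $b$, a region homeomorphic to a planar set that contains $V_n$ for large $n$; since subsets of planar surfaces are planar, $b$ is planar. I do not expect any difficulty in the "if" direction beyond invoking Proposition~\ref{P6} and the uniqueness clause of Proposition~\ref{P3}.
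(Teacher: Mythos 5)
Your proof is correct, and the ``if'' direction is exactly the paper's argument: the paper derives Proposition~\ref{P7} as an immediate consequence of Proposition~\ref{P6}, precisely because $B(S)\cong B(M-K)\cong M$ is a compact surface. The paper does not spell out the ``only if'' direction at all, but your contrapositive argument --- using Remark~\ref{R4} to produce a non-planar end $b$ and then noting that a Euclidean chart around $b$ in $B(S)$ would force $V_n$ to be planar for large $n$ --- is the natural way to fill that gap, and it matches the remark the paper makes after Proposition~\ref{P11} that $B(S)$ fails to be locally Euclidean exactly at points of $b'(S)$.
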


Now we would like to describe a method of computing
ideal boundary points.

An {\it exhaustion} of $S$ is an increasing sequence 
$F_1\subset F_2\subset \cdots$ of compact connected
bordered surfaces contained in $S$ such that:
\begin{enumerate}[(a)]
\item\label{aexh} $F_n\subset int_S F_{n+1}$.
\item\label{bexh} $S=\cup_n F_n$.
\item\label{cexh} If $W$ is a connected component of $S-F_n$, then 
$cl_SW$ is a non compact bordered surface whose boundary consists 
of exactly one connected component of $\partial F_n$.
\end{enumerate}

We have the following existence result:

\begin{Proposition}\label{P8}\quad

Every non compact connected surface without boundary admits an exhaustion.
\end{Proposition}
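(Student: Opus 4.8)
The plan is to build an exhaustion of a non-compact connected surface $S$ without boundary by a standard diagonal/refinement argument, starting from a countable exhaustion by compact sets and successively repairing the defects that prevent conditions \eqref{aexh}, \eqref{bexh}, \eqref{cexh} from holding. First I would invoke second countability: $S$ can be triangulated (Rad\'o's theorem) or, more elementarily, one can take a metric and write $S=\cup_n C_n$ with each $C_n$ compact and $C_n\subset int_S C_{n+1}$. The issue is that the $C_n$ are neither connected, nor bordered surfaces, nor do their complementary components behave as required in \eqref{cexh}.

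The key steps, in order, would be: (1) Replace each $C_n$ by a compact bordered subsurface $F_n$ containing it. Using a triangulation, take $F_n$ to be a regular neighborhood of the union of all closed simplices meeting $C_n$; this is automatically a compact surface with boundary a finite disjoint union of simple closed curves. (2) Make $F_n$ connected: since $S$ is connected, any two components of $F_n$ can be joined by a path in $S$; thicken finitely many such arcs (again via the triangulation) and absorb them into $F_{n+1}$, so after passing to a subsequence we may assume each $F_n$ is a connected compact bordered surface with $F_n\subset int_S F_{n+1}$ and $\cup_n F_n = S$. (3) Fill in ``bounded'' complementary components: if $W$ is a component of $S-F_n$ with $cl_S W$ compact, adjoin $cl_S W$ to $F_n$; this keeps $F_n$ a compact connected bordered surface and removes that $W$. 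After this step every component $W$ of $S-F_n$ has $cl_S W$ non-compact. (4) Arrange that the frontier of each such $W$ is a single boundary circle of $F_n$: a priori $fr_S W = cl_S W\cap \partial F_n$ could be a union of several of the finitely many boundary circles of $F_n$. If $W$ meets several boundary circles of $F_n$, pick arcs inside $W$ joining them and, since $\cup F_n=S$, these arcs lie in some $F_m$ with $m>n$; thickening them and absorbing into $F_m$ merges those boundary circles. Passing once more to a subsequence, we obtain the full list of properties \eqref{aexh}--\eqref{cexh}.

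The main obstacle I expect is making steps (3) and (4) simultaneously consistent along the whole sequence: each repair at level $n$ is carried out using material that only appears at some later level $m=m(n)$, so one must organize the construction as a genuine diagonal argument — repair finitely many defects of $F_1$ using $F_{m_1}$, then relabel, repair $F_2$, and so on — and then verify that after all the relabelings the three exhaustion axioms hold for the final sequence and that $\cup_n F_n$ is still all of $S$ (no point gets ``pushed to infinity''). The latter is guaranteed because we only ever enlarge the $F_n$ and we started from an exhaustion by the $C_n$; a point $x\in C_k$ lies in $F_k$ from the start and stays there. A secondary technical point is ensuring that regular neighborhoods and thickened arcs are taken so that boundaries remain embedded circles and the nesting $F_n\subset int_S F_{n+1}$ is preserved; this is routine given a fixed triangulation or a smooth structure, and I would cite the standard facts about regular neighborhoods rather than reprove them. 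Finally, one records that every component $W$ of $S-F_n$ is then an open connected surface with compact frontier a single circle and non-compact closure, hence $cl_S W$ is a non-compact bordered surface as required in \eqref{cexh}.
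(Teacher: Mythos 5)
The paper does not give its own proof of Proposition~\ref{P8}; it refers to the Theorem of \S 29A, Chapter~1, of Ahlfors--Sario~\cite{AhSa}. So your proposal must be judged as a stand-alone argument. Your overall scheme --- $\sigma$-compact exhaustion, upgrade to compact connected bordered subsurfaces via a triangulation, fill in relatively compact complementary components, merge boundary curves bounding the same complementary component, diagonalize --- is the standard one and is sound in outline.

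The one place where the reasoning as written has a genuine gap is step~(4). First, a small wording problem: ``thickening them and absorbing into $F_m$'' does not modify $F_n$; the band must be adjoined to $F_n$ itself, producing $F_n'=F_n\cup B$, and only then does $F_n'\subset int_S F_m$ for $m$ large support the relabeling. Second, and more substantively, after adjoining a band $B\subset cl_S W$ joining two circles $C_1,C_2$ of $fr_S W$ you must check that $W-B$ is again a single connected unbounded complementary component, so that the operation does not disconnect $W$, does not cut off a new relatively compact piece (which would undo step~(3)), and terminates. You state the conclusion but not the argument. These facts are true but need the following observations: the merged circle $C'$ is the only boundary circle of $F_n\cup B$ meeting $B$; each boundary circle of a compact bordered subsurface lies in the frontier of exactly one complementary component; hence, writing $cl_S W = B\cup\bigcup_i cl_S V_i$ with $V_i$ the components of $W-B$ and $V_{i_0}$ the one whose frontier is $C'$, the sets $cl_S V_i$ for $i\ne i_0$ are disjoint from $B\cup cl_S V_{i_0}$, so connectedness of $cl_S W$ forces $W-B$ to have exactly one component; $cl_S(W-B)$ is noncompact because $cl_S W$ is and $B$ is compact; and each merge strictly decreases the finite, positive number of boundary circles of $F_n$, so finitely many merges suffice at each level. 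With these verifications inserted, your outline becomes a correct proof of the proposition.
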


For a proof see the  Theorem in section~29A of chapter~1 in Ahlfors and Sario~\cite{AhSa}.

Let $(F_n)$ be an exhaustion of $S$.
Since $F_n$ is a compact bordered surface, the number of connected components
of $\partial F_n$ is finite. This implies that 
 the number of connected components of $S-F_n$ is finite. 

Denote the connected components of $S-F_n$ by $W^i_n$, where $i$ runs 
over some finite set.
If $m>n$ then $S-F_m\subset S-F_n$ and every $W^j_m$
is contained in some $W^i_n$.
Besides that, every $W^i_n$ contains at least one $W^j_m$,
otherwise $W^i_n\subset F_m$ and $cl_SW^i_n$
would be compact, contradicting the definition of an exhaustion.

Let $W^{i_1}_1\supset W^{i_2}_2\supset \cdots \supset W^{i_n}_n\supset \cdots$ 
be a decreasing sequence of connected components of the sets $S-F_n$.
It is easy to check that $(W^{i_n}_n)$ satisfies the conditions
\eqref{c1}--\eqref{c5} for being
an ideal boundary component of $S$.

\begin{Proposition}\label{P9}\quad

Let $(F_n)$ be an exhaustion of a non compact connected surface $S$.
Then every ideal boundary component of $S$ is equivalent to another of
the form $(W_n)$, where $W_n$ is a connected component of $S-F_n$.
\end{Proposition}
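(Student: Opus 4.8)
The plan is to show that an arbitrary ideal boundary component $(V_n)$ is equivalent to one built from the exhaustion $(F_n)$. By Remark~\ref{R2} I may replace $(V_n)$ by an equivalent ideal boundary component $(W_n)$ with $cl_SW_{n+1}\subset W_n$ for every $n$; since equivalence is transitive, it suffices to treat this case. The idea is then: for each $n$, the compact set $cl_SV_n$ (or rather a suitably shrunken member of the sequence) is eventually disjoint from $F_n$ because the $V_n$ leave compact subsets of $S$, so $V_n\subset S-F_n$ for large $n$; since $V_n$ is connected, it lies in a single connected component of $S-F_n$.

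First I would fix, for each $k$, an index $n(k)$ with $V_{n(k)}\cap F_k=\emptyset$, using property \eqref{c5}; I can take $n(k)$ strictly increasing. Then $V_{n(k)}$ is a connected subset of $S-F_k$, hence contained in exactly one connected component, call it $\cW_k$, of $S-F_k$. Next I must check that the sequence $(\cW_k)$ is decreasing: if $k<k'$ then $F_k\subset F_{k'}$, so every component of $S-F_{k'}$ lies in a unique component of $S-F_k$; since $V_{n(k')}\subset V_{n(k)}$ (as $n$ is increasing and $(V_n)$ is decreasing) and $V_{n(k')}\subset\cW_{k'}$, $V_{n(k')}\subset\cW_k$ as well, forcing $\cW_{k'}\subset\cW_k$ because $\cW_{k'}$ is connected and meets $\cW_k$. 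So $(\cW_k)$ is a decreasing sequence of connected components of the sets $S-F_k$, which by the discussion preceding the proposition is an ideal boundary component of $S$; write $W_k'=\cW_k$ after reindexing so that $\cW_k$ is the component of $S-F_k$.

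It remains to prove equivalence of $(V_n)$ and $(W_k')$. In one direction, $W'_k=\cW_k\subset S-F_k$, and I must find, for each $n$, some $k$ with $W'_k\subset V_n$; this follows from property \eqref{c5} applied to the compact set $fr_SV_n$ together with connectedness: once $\cW_k$ is disjoint from $fr_SV_n$ and $\cW_k\supset V_{n(k)}$ meets $V_n$, connectedness of $\cW_k$ forces $\cW_k\subset V_n$ (since $\cW_k$ cannot cross the frontier $fr_SV_n$, and $V_{n(k)}\subset V_n$ shows it lands inside rather than outside). Wait---I should be careful that $\cW_k$ cannot meet $fr_SV_n$: since $\cW_k\cap F_k=\emptyset$ and eventually $fr_SV_n\subset F_k$ by \eqref{c5}, this holds for $k$ large. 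In the other direction, $V_{n(k)}\subset\cW_k=W'_k$, so for every $k$ the set $V_{n(k)}$ is a member of $(V_n)$ contained in $W'_k$; thus every $W'_k$ contains some $V_m$, which is exactly the second half of the equivalence. I expect the main obstacle to be the bookkeeping that $\cW_k$ truly cannot straddle $fr_SV_n$---i.e., ensuring that the connected set $\cW_k$, once it avoids a compact set containing $fr_SV_n$, is trapped on the $V_n$-side rather than escaping; this is where property \eqref{c5} (equivalently \eqref{c5p}) and the connectedness hypothesis \eqref{c2} must be combined carefully, and it is cleanest to phrase it as: $\cW_k$ is a connected subset of $S-fr_SV_n = V_n\sqcup(S-cl_SV_n)$ which meets $V_n$, hence $\cW_k\subset V_n$.
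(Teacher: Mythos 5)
Your proof is correct and reaches the conclusion by essentially the same mechanism as the paper, but with a worthwhile simplification. Both arguments rest on the fact that components of $S-F_k$ eventually nest inside the $V_n$'s (using compactness of $fr_S V_n$) and conversely (producing the interleaving that witnesses equivalence). The paper, however, identifies the relevant component $W_m$ by passing through the completion: it takes the component $W_m^*$ of $B(S)-F_m$ containing the ideal boundary point $b$, proves via a separate claim (invoking the non-separating property of $b(S)$) that $W_m = W_m^* - b(S)$ is connected, and only then deduces $W_m$ is a component of $S-F_m$ with $V_\ell \subset W_m$ for some $\ell$. You instead use condition~\eqref{c5} to pick $n(k)$ with $V_{n(k)}\cap F_k=\emptyset$ and let $\cW_k$ be the component of $S-F_k$ containing $V_{n(k)}$, which makes $V_{n(k)}\subset\cW_k$ hold by definition and needs no auxiliary connectivity lemma about $B(S)$. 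Two minor remarks: the initial appeal to Remark~\ref{R2} is unused (nothing in your argument requires nested closures $cl_S V_{n+1}\subset V_n$), and the step $fr_S V_n\subset F_k$ for large $k$ follows from compactness of $fr_S V_n$ plus the exhaustion property \eqref{aexh}--\eqref{bexh}, not from condition~\eqref{c5}; the phrasing of $S-fr_S V_n = V_n\sqcup(S-cl_S V_n)$ as a disjoint union of open sets is exactly the right way to conclude $\cW_k\subset V_n$.
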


\begin{proof}\quad

Let $(V_k)$ be an ideal boundary component of $S$.
Since $fr_S V_n$ is compact and $F_n$ is an exhaustion
of $S$ we have that 
\begin{equation}\label{mok}
\forall k \quad \exists m_0>k \quad \forall m>m_0\quad
fr_S V_k\subset int_SF_m.
\end{equation}
Let $W_m^*$ be the connected component of $B(S)- F_m$
which contains the ideal boundary point $b=(V_n)\in b(S)$.
Let $W_m := W^*_m- b(S)$. We claim that $W_m$ is connected\footnote{This follows from proposition~\ref{P3}.\eqref{p33}. We include a proof for completeness.}. 

Indeed, suppose that $W_m$ is not connected. Let $A_1$, $A_2$ be disjoint nonempty open subsets of $S$
such that $W_m = A_1 \cup A_2$. The sets  $A^*_i:= A_i \cup A_i'$ are nonempty, open and disjoint in $B(S)$.
Let $c=(U_n)\in W^*_m-W_m\subset b(S)$.   
Since $W^*_m$ is open in $B(S)$ and $U^*_n:= U_n\cup\{c\}$ is a fundamental system
of neighborhoods of $c$ in $B(S)$,
we can choose the sets $U_n$ satisfying $U_n\subset W_n$.
Since $U_n$ is connected, there is one $A_i$ such that $U_n\subset A_i$
for infinitely many $n$.
Then $c\in A_i'\subset A^*_i$.
This shows that $W^*_m =A^*_1\cup A^*_2$ and also that $W^*_m$ is disconnected,
which contradicts its choice.

Observe that $W^*_m$ contains a connected component of $S- F_m$.
Since $W_m$ is connected, we have that 
$$
\text{$W_m$ is a connected component of 
$S- F_m$.}
$$

Since $W^*_m$ contains the ideal boundary point $b=(V_n)$, we 
have that $W_m\cap V_k\ne\emptyset$.
Observe that 
$V_k$ is a connected component of $S-fr_S V_k$.
By \eqref{mok},   $S-F_m\subset S-fr_S V_k$ for all $m>m_0(k)$.
Therefore  
$$
\forall m>m_0(k)\qquad W_m\subset V_k.
$$

Since $fr_S W_m\subset F_m$ is compact, 
the set $W^*_m$ is open in $B(S)$. The condition $b\in W_m^*$
means that there exists $\ell>m$ such that $V_\ell\subset W_m$.

For every $k$ we have obtained $\ell>m>k$ and $V_\ell\subset W_m\subset V_k$,
where $W_m$ is a connected component of $S-F_m$. Then there is a sequence
$$
V_1 \supset W_{m_1}\supset V_{k_2}\supset W_{m_2} \supset V_{k_3}\supset \cdots
$$
where $W_{m_i}$ is a connected component of $S-F_{m_i}$ and in particular $fr_S W_{m_i}$ 
is compact. Then $(W_{m_i})$ is an ideal boundary component equivalent to $b$.

\end{proof}

\begin{Remark}\label{R10}\quad

If for every $n$ the number of connected components of  $S-F_n$ is less than or equal to $k$,
then $S$ has at most $k$ ideal boundary points.
\end{Remark}

Now we would like to describe a result that shows that every non compact connected 
surface can be represented as a sphere with the deletion of a finite or infinite 
closed totally disconnected subset and the addition of a finite or infinite number 
of handles and crosscaps. This can be seen as a generalization of the classical
representation theorem for compact surfaces.

So let $S$ be a compact connected surface.

If $S$ has finite genus then the above description is a consequence of Proposition~\ref{P6}. 
In this case $S$ is homeomorphic to the sphere with a closed totally disconnected subset 
removed and the addition of finitely many handles and crosscaps.

Suppose now that $S$ has infinite genus. This is equivalent to $b'(S)\ne \emptyset$.

Let $S^2 = \re^2\cup\{\infty\}$ be the one point compactification of the plane.
We consider the Cantor ternary set $C$ as the subset of $S^2$ consisting 
of all points $(x,0)$ such that $x$ has a ternary expansion which contains no 1's.

Let $X\supset Y\supset Z$ be subsets of $C$ homeomorphic to $b(S)$, $b'(S)$ and $b''(S)$,
respectively. If we look at $C$ as obtained by the process of removing middle thirds,
then $C=\cap_{n\ge 1} J_n$ where $(J_n)$ is a nested sequence of sets consisting of the
union of pairwise disjoint closed intervals $I^k_n$ of length $\tfrac 1 {3^n}$, $1\le k\le 2^n$.
For each $n$ choose a collection of $2^n$ pairwise disjoint open balls 
$B^k_n$ such that $I^k_n\subset B^k_n$, the centers of $B^k_n$ and $I^k_n$ coincide,
and the balls have the same radius.
We also want that either $B^\ell_{n+1}\cap B^k_n=\emptyset$
or $B^\ell_{n+1}\subset B^k_n$. We still have that 
$C=\cap_{n\ge1} \cup_{1\le k\le 2^n} B^k_n$ and for each $x\in C$ 
there exists a unique sequence $k_n$ such that $\cap_{n\ge 1} B^{k_n}_n=\{x\}$.

Every ball $B^k_n$ contains exactly two balls $B^\ell_{n+1}$ and $B^{\ell+1}_{n+1}$.
If $B^k_n$ contains a point of $Z$ then we choose a closed disk $D$ 
contained in $B^k_n$ and disjoint from the closures of $B^\ell_{n+1}$ and 
$B^{\ell+1}_{n+1}$, and make the connected sum of $S^2$ and 
a projective plane along the boundary of $D$.
If $B^k_n$ contains a point of $Y-Z$, then we choose a closed disk 
$D$ contained in $B^k_n$ and disjoint from the closures of 
$B^\ell_{n+1}$ and $B^{\ell+1}_{n+1}$,
and make the connected sum of $S^2$ and a torus along the boundary of $D$.

Let $M$ be the surface obtained after these connected sums and the deletion of
points of $X$. In $S^2$ points of $Z$ are accumulated by crosscaps and points 
of $Y-Z$ are accumulated by handles.

For each $x\in X$ there exists a unique sequence $k_n$ such that 
$\cap_{n\ge 1} B^{k_n}_n=\{x\}$. This gives a one to one
correspondence between points  $x\in X$ and ideal boundary
components $(B^{k_n}_n)$ of $b(M)$.
It is not difficult to show that this correspondence gives a
homeomorphism from $b(M)$ onto $X$ that takes $b'(M)$
to $Y$ and $b''(M)$ to $Z$.
(The final part of the proof of Theorem~2 of \cite{Rich}
contains a detailed proof of this fact).

The surfaces $S$ and $M$ have the same genus and orientability type.
It follows from Ker\'ekj\'art\'o Theorem~\ref{kerekjarto} that $S$ and $M$ are homeomorphic. 
To summarize, we have the following:

\begin{Proposition}\label{P11}\quad

Let $S$ be a non compact connected surface of infinite genus.
Then $S$ is homeomorphic to a surface obtained from the sphere $S^2$
by removing a totally disconnected closed subset $X$, taking an infinite 
collection of pairwise disjoint closed balls $(B_n)$ and making the connected 
sum of $S^2$ and a torus or a projective plane along the boundaries 
of $B_n$. Given a neighborhood $W$ of $X$ in $S^2$ all but finitely many balls
$B_n$ are contained in $W$.
\end{Proposition}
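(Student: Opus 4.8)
The plan is to take the surface $M$ constructed in the paragraphs preceding the statement and check that it has all the required properties, and then to conclude by the Ker\'ekj\'art\'o Theorem~\ref{kerekjarto}. Recall that $M$ is built from $S^2=\re^2\cup\{\infty\}$ by fixing nested subsets $X\supset Y\supset Z$ of the Cantor set $C$ homeomorphic to $b(S)$, $b'(S)$, $b''(S)$, choosing the nested families of open balls $B^k_n\supset I^k_n$, making a connected sum with a torus along the boundary of a closed disk inside each $B^k_n$ that meets $Y-Z$ and with a projective plane along the boundary of a closed disk inside each $B^k_n$ that meets $Z$, and finally deleting the points of $X$. Thus almost all the work is already done; what remains is to identify the ideal boundary of $M$, to pin down the genus and orientability type, and to verify the last sentence.

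First I would build an exhaustion of $M$ adapted to the construction: at stage $n$ delete from $M$ a small open ball around each of the $2^n$ intervals $I^k_n$ and thicken the result to a compact bordered surface. By Proposition~\ref{P9} every ideal boundary component of $M$ is then equivalent to one of the form $(B^{k_n}_n\cap M)$, and for such a descending sequence $\cap_n B^{k_n}_n$ is a single point $x\in X$; this yields a bijection $b(M)\to X$, which I would check is a homeomorphism using that $\{B^{k_n}_n\}_n$ is a neighborhood basis of $x$ in $S^2$ together with the definition of the topology on the ideal boundary given above. Next I would observe that the ideal boundary point $(B^{k_n}_n\cap M)$ is planar precisely when $x\notin Y$ --- since $Y$ is closed and $\diam B^k_n\to0$, for large $n$ the ball $B^{k_n}_n$ contains a disk carrying a handle or crosscap if and only if $x\in Y$ --- and likewise it is orientable precisely when $x\notin Z$. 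Hence the homeomorphism $b(M)\to X$ carries $b'(M)$ onto $Y\cong b'(S)$ and $b''(M)$ onto $Z\cong b''(S)$, and by Remark~\ref{R4} $M$ has infinite genus, as does $S$. Matching the orientability type needs a small case split: if $S$ is orientable, take $Z=\emptyset$; if $S$ is infinitely non orientable, this is the case $Z\ne\emptyset$; if $S$ is finitely non orientable, arrange that one or two of the disks along which a connected sum is made carry a projective plane instead of a torus, so as to give the correct odd/even type. In every case $S$ and $M$ have the same genus and orientability type, so the Ker\'ekj\'art\'o Theorem~\ref{kerekjarto} provides a homeomorphism $S\to M$, which is the asserted representation (the balls $(B_n)$ being the closed disks along which the connected sums were performed).

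Finally, for the neighborhood clause: $X$ is compact because $b(S)$ is (Proposition~\ref{P3}), so given an open set $W\supset X$ in $S^2$ the complement $S^2-W$ is compact and disjoint from $X$. Since there are only finitely many balls at each level and $\diam B^k_n\to0$ as $n\to\infty$ uniformly in $k$, I would argue by contradiction: if infinitely many of the disks used in the connected sums met $S^2-W$, they would occur at arbitrarily high levels, and choosing in each containing ball a point of $S^2-W$ together with the nearby point of $Y\subset X$ would, by compactness of $X$ and $\diam B^k_n\to0$, force a point of $X\cap(S^2-W)$, which is absurd. Hence all but finitely many of the $(B_n)$ lie in $W$. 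The one genuinely delicate point I anticipate is reconciling the combinatorics of the nested balls with the topology on $b(M)$ so that the natural bijection $b(M)\to X$ is seen to be a homeomorphism respecting $b'$ and $b''$; everything else is bookkeeping.
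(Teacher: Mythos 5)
Your argument follows the paper's own construction of the model surface $M$ from $S^2$ via the nested balls $B^k_n$, connected sums with tori and projective planes near $Y$ and $Z$, deletion of $X$, identification of $b(M),b'(M),b''(M)$ with $X,Y,Z$ via Proposition~\ref{P9}, and conclusion by the Ker\'ekj\'art\'o Theorem~\ref{kerekjarto} --- essentially the same route as the paper, with the combinatorics of the exhaustion, the neighborhood clause, and the $b\leftrightarrow x$ homeomorphism spelled out more carefully. You also fill a real gap that the paper's terse ``$S$ and $M$ have the same genus and orientability type'' leaves unaddressed: when $S$ is finitely non orientable one has $Z=\emptyset$ and the paper's recipe yields an orientable $M$, so your insertion of one or two extra projective planes at a finite level away from $X$ is genuinely needed (not just bookkeeping) to make the orientability types match before invoking Ker\'ekj\'art\'o.
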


See Theorem~3 of \cite{Rich} and the discussion preceding it.
Although $B(S)$ is not always a surface, the last proposition
gives a very good description of what it looks like.
The completion $B(S)$ is locally homeomorphic to $\re^2$ at every point not in 
$b'(S)$ or $b''(S)$, whose points are accumulated by handles or crosscaps,
respectively.

\subsection{More general surfaces.}\quad

The concept of ideal boundary can be generalized to non connected 
surfaces possibly with boundary.

Let $S$ be a surface. The definitions of ideal boundary components, ideal boundary points,
ideal boundary $b(S)$, ideal completion $B(S)$ and their topologies are the same.
If $S_1$, $S_2$, ... are the connected components of $S$,
then $b(S)=\sqcup_i b(S_i)$ and $B(S)=\sqcup_iB(S_i)$.
The completion $B(S)$ is compact if and only if $S$ has finitely many connected components.
Obviously if $S$ is compact then $b(S)=\emptyset$.

An important case is that of a connected surface $S$ with $\partial S$ compact. 
In this case $\partial S$ is the union of a finite number of curves homeomorphic to 
circles. All results previously developed for non compact connected surfaces without boundary, 
with the obvious 
\linebreak
adaptations, apply to these surfaces. The ideal completion of $S$
is a surface if and only if $S$ has finite genus.

Let $S_1$ and $S_2$ be two non compact connected surfaces with compact boundary
which have the same genus and the same orientability type. 
Then $S_1$ and $S_2$ are homeomorphic if an only if they 
have the same number of boundary components and there 
exists a homeomorphism of $b(S_1)$ onto $b(S_2)$, such that 
$b'(S_1)$ and $b''(S_2)$ are mapped onto $b'(S_2)$ and $b''(S_2)$, respectively.

The definition of exhaustions $(F_n)$ for non compact connected surfaces $S$ 
with compact boundary could be the same.
Since $(int_SF_n)$ is an open cover of $S$, 
any compact subset of $S$ would be contained in some $F_n$.
So for surfaces with compact boundary we will further require that
all the sets $F_n$ in an exhaustion of $S$ contain $\partial S$
in their interiors.
Let $S^*$ be the surface obtained by gluing a closed disk $D_i$ 
to each boundary component $\xi_i$ of $S$.
We are going to think of $S$ as a subset of $S^*$
and write 
\begin{equation}\label{s*}
S^*=S\cup D, \quad \text{ where  }
D=D_1\cup \cdots\cup D_k.
\end{equation}
Obviously $b(S)=b(S^*)$.
Exhaustions of $S$ would correspond to exhaustions of $S^*$
whose compact sets contain $D$ in their interiors.
From this, it is clear that there exist  exhaustions $(F_n)$ for $S$ and
that ideal boundary points of $S$ can be computed by using ideal boundary 
components made of connected components of the sets $S-F_n$.

\subsection{The impression of an ideal boundary point of a surface
contained in another surface.}
\quad

Now let $S$ be a connected boundaryless surface,
let $U\subset S$ be a surface possibly with compact 
boundary and $b$ and ideal boundary point of $U$.
Let $(V_n)$ be an ideal boundary component of $U$ representing  $b$.
We define the {\it impression of $b$} (relative to $S$) as
\begin{equation}\label{Zb}
Z(b):=\textstyle\bigcap _n cl_{B(S)}V_n.
\end{equation}

The impression $Z(b)$ is just the set of limit points in
 $B(S)$ of sequences in $U$ that converge to $b$ in $B(U)$.
 The definition does not depend on the choice of $(V_n)$ and $Z(b)$
 is a nonempty, connected, compact subset of $fr_{B(S)}U$.
 
 We say that $b$ {\it is relatively compact in $S$} 
 if some $V_n$ is relatively compact in $S$, i.e.
 if some $V_n$ has compact closure in $S$.
 It is easy to check that $b$ is relatively compact in $S$ if and only if 
 $Z(b)\subset S$.
 In this case $Z(b)=\cap_n cl_SV_n$.
 We call $b$ a {\it regular ideal boundary point of $U$} if $b$ is relatively compact
 in $S$ and $Z(b)$ contains more than one point. 
 
 Next we show that if $b$ is relatively compact in $S$ then 
 it has a neighborhood in $B(U)$ homeomorphic to $\re^2$.
 
 \begin{Proposition}\label{P12}
 \quad
 
 Let $S$ be a non compact connected surface,
 let $U\subset S$ be a connected surface possibly with compact boundary.
 If $b$ is an ideal boundary point of $U$ which is relatively compact in $S$
  then $b$ has a neighborhood in $B(U)$ homeomorphic to $\re^2$.
 \end{Proposition}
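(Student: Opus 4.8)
The plan is to reduce to a local model near the impression $Z(b)$ inside $S$ and then quotient by an equivalence relation that collapses $Z(b)$ to the point $b$. First I would use the hypothesis that $b$ is relatively compact in $S$ to fix an ideal boundary component $(V_n)$ representing $b$ with $cl_S V_1$ compact, and (invoking Remark~\ref{R2}) arranged so that $cl_S V_{n+1}\subset V_n$ for every $n$. By the remarks following \eqref{Zb}, $Z(b)=\bigcap_n cl_S V_n$ is a nonempty compact connected subset of $fr_S U$, and $cl_S V_n$ shrinks down to it. Each $cl_S V_n$ is a closed subset of $S$ whose frontier $fr_S V_n=cl_S V_n - V_n$ is a compact $1$-manifold (a finite disjoint union of circles, after passing to a subsequence so that $fr_S V_n$ misses $\partial U$ and is a clean boundary curve as in Remark~\ref{R2}); so $cl_S V_n$ is a compact bordered surface, possibly disconnected, but with one distinguished component $K_n$ containing $V_n$.

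The key step is to understand the topology of $K_n$ for large $n$. Since $(V_n)$ is a single ideal boundary component of $U$, the ``tube'' $K_n$ eventually carries exactly one end of $U$, namely $b$; and since $K_n\subset cl_S V_n$ collapses to the connected set $Z(b)$ as $n\to\infty$, $K_n$ is a compact connected bordered surface with exactly one boundary circle $fr_S V_n$ and — here is where relative compactness does the work — $K_n$ can be taken to have genus zero and to be orientable and two-sided for $n$ large. The point is that if $K_n$ had positive genus (or a crosscap) for all $n$, these would have to accumulate on $Z(b)\subset S$, contradicting that $S$ is a surface (every point of $S$ has a Euclidean neighborhood, hence only finitely many handles/crosscaps can accumulate there — this is exactly the local picture behind Proposition~\ref{P11}). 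Hence for large $n$, $K_n$ is a closed disk. This is the technical heart of the argument and where I expect the main obstacle: making rigorous the claim that, because $Z(b)\subset S$, the tubes $K_n$ are eventually planar and in fact eventually disks. One clean way is to take an exhaustion $(F_j)$ of $S$, apply Proposition~\ref{P9} in $S$ to relate $V_n$ to components $W$ of $S-F_j$, observe each such $W$ is a bordered surface with one boundary circle (condition \eqref{cexh}), and use that $Z(b)$ being a compact subset of $S$ forces $Z(b)\subset F_j$ for large $j$, so the relevant $W$ lies outside $F_j$ yet its closure meets $Z(b)\subset F_j$ only through its single boundary curve; a genus or crosscap in $W$ would be an obstruction surviving into $B(S)$ near a point of $S$, impossible since $B(S)$ is a surface near points of $S$ (Proposition~\ref{P7} locally). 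Thus $W$, hence $K_n$, is a disk.

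Once $K_n$ is a closed disk $\overline{\mathbb D}$ with boundary circle $fr_S V_n$, the end of $U$ at $b$ has the neighborhood $V_n=K_n - Z(b)$, i.e.\ a closed disk with a compact connected subset $Z(b)$ of its interior removed. To finish, I would form the quotient of $cl_{B(U)}V_n^*$ — equivalently of $K_n$ — by the relation that identifies all of $Z(b)$ to a single point; by Moore's decomposition theorem (or directly, since $Z(b)$ is a compact connected, cellular-like subset of the open disk $K_n^\circ$), the quotient is again homeomorphic to $\overline{\mathbb D}$, and the image of $Z(b)$ is an interior point with a neighborhood homeomorphic to $\re^2$. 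It remains to identify this quotient neighborhood with a neighborhood of $b$ in $B(U)$: the sets $V_m^*=V_m\cup V_m'$ form a fundamental system of neighborhoods of $b$, and the natural map $V_n = K_n - Z(b) \hookrightarrow U$ extends to a homeomorphism of the one-point compactification of the end onto $V_n\cup\{b\}$ sitting inside $B(U)$, because by definition of the ideal boundary topology the single ideal boundary point $b$ ``fills'' precisely the hole left by removing $Z(b)$ (the components of $K_n - Z(b)$ minus a compact set correspond to the tails $V_m$, all equivalent to $b$). Therefore $b$ has a neighborhood in $B(U)$ homeomorphic to $\overline{\mathbb D}$, hence to $\re^2$, and in fact $Z(b)$ collapsing shows the neighborhood can be taken open. \qquad$\blacksquare$
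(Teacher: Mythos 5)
Your proposal takes a genuinely different route from the paper, but it contains two concrete errors that sink it in its current form.

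The first is the assertion that $cl_S V_n$ is a compact bordered surface whose frontier $fr_S V_n$ is a finite union of circles. You are confusing $fr_S V_n$ with $fr_U V_n$. The definition of an ideal boundary component of $U$ only controls $fr_U V_n$, which indeed can be arranged to consist of finitely many circles in $U$. But $fr_S V_n$ also contains $fr_S V_n\cap fr_S U$, which is a compact set sitting inside $fr_S U$ and in general is not a $1$--manifold at all — it could be a topologist's sine curve, a pseudo-arc, or anything else that can occur as (part of) the frontier of a domain. In particular Remark~\ref{R2} does not help here: it only lets you assume $cl_U V_{i+1}\subset V_i$, not that $cl_S V_n$ is a nice compact bordered surface. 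So your $K_n$ need not be a bordered surface, and the rest of the argument (counting handles, applying Moore to a disk) does not get off the ground.

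The second error is independent and fatal even if the first were fixed: the claim that $K_n=cl_S V_n$ is eventually a closed disk is false. Take $S=\re^2$, $K$ the unit circle, $U=\{\,|z|>1\,\}$, and $V_n=\{\,1<|z|<1+\tfrac1n\,\}$. Then $b$ is a relatively compact ideal boundary point with $Z(b)=K$, but $cl_S V_n$ is a closed annulus, not a disk, for every $n$. The conclusion of the proposition is of course still true here — $V_n^*=V_n\cup\{b\}$ is a punctured disk with the puncture filled in — but it is not obtained by the route you describe. More generally, $cl_S V_n$ picks up the full homology of $Z(b)$, so it is only a disk when $Z(b)$ is non-separating; your ``handles accumulate at a point of $S$'' heuristic rules out genus but not this kind of separation. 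You flag the disk claim as the ``technical heart''; indeed it is, and it is where the argument breaks.

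The paper sidesteps all of this. It never looks at $cl_S V_n$ or at $Z(b)$. Instead it observes that once $cl_S V_n$ is inside a compact piece $F_k$ of $S$, the open set $V_n$ itself has finite genus, and then shows (using only $cl_U V_{i+1}\subset V_i$, condition \eqref{c5p}, and the identities $cl_{V_n}V_i=cl_U V_i$, $fr_{V_n}V_i=fr_U V_i$) that $(V_i)_{i\ge n}$ is an ideal boundary component of $V_n$. Proposition~\ref{P7} applied to $V_n$ then says $B(V_n)$ is a surface, and the sets $V_i^*$ are simultaneously a neighborhood basis of $b$ in $B(U)$ and of the corresponding end in $B(V_n)$. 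This avoids any analysis of the continuum $Z(b)$ and of $cl_S V_n$ entirely, which is precisely what makes the proof short. If you want to pursue the quotient picture, you would have to (i) replace $cl_S V_n$ by the ideal completion $B(V_n)$ rather than a subset of $S$, and (ii) abandon the ``$K_n$ is a disk'' step, since it is false; at that point you are essentially re-deriving Proposition~\ref{P7}.
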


 \pagebreak
 \begin{proof}
 \quad

There exists an ideal boundary component $(V_n)$ of $U$ that 
represents $b$ such that $cl_SV_n$ is compact for some value of $n$.
Consider an exhaustion $(F_n)$ of $S$. 
Then $cl_SV_n\subset int_S F_k$ for some $k$.
Since $F_k$ is a compact surface we have that $V_n$ is a surface
of finite genus.

We claim that $(V_i)_{i\ge n}$ is an ideal boundary component of $V_n$.
It follows from the claim and Remark~\ref{R4} that $V_i$
is homeomorphic to an open subset of $\re^2$ for every $i$ large enough.

By Remark~\ref{R2} 
we may assume that 
$cl_U V_{i+1}\subset V_i$ for every $i$, 
implying that 
\linebreak
$cl_{V_n} V_i = cl_U V_i$ and $fr_{V_n}V_i=fr_U V_i$ 
for $i>n$. 
The proof of the claim is a simple verification that $(V_i)_{i\ge n}$
satisfies conditions~\eqref{c1} through~\eqref{c4} and~\eqref{c5p} 
of the definition 
of ideal boundary components of $V_n$.

Then
 Proposition~\ref{P7} finishes the proof.

 \end{proof}

\subsection{Residual domains of compact sets whose frontier contains finitely
many components.}\quad

Now we are going to present some properties of residual domains of compact subsets of $S$ 
that have finitely many components. Endow $S$ with a complete riemannian metric,
so that the non relatively compact ideal boundary points of a residual domain
are at infinity and its non relatively compact boundary components are unbounded.

\begin{Proposition}\label{P13}\quad

Let $K$ be a compact subset of a complete  connected surface $S$.
Then the union of $K$ and its bounded residual domains is compact.
\end{Proposition}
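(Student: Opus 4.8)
The plan is to show that the set $\widehat K := K \cup (\text{union of bounded residual domains of } K)$ is closed and bounded, hence compact since $S$ is complete. Boundedness will be the easier part: each bounded residual domain $U$ satisfies $\partial U = fr_S U \subset K$ (because $U$ is a connected component of the open set $S-K$, whose frontier lies in $K$), so $cl_S U$ is a compact set whose frontier lies in $K$; the key geometric fact is that $cl_S U \subset \{x : d(x,K) \le 0\}$ is false in general, so instead I would argue that if there were bounded residual domains of arbitrarily large diameter, one could extract, but actually the cleanest route is: pick $R$ large enough that $K$ is contained in the open ball $B(R)$ around a fixed basepoint; I claim every bounded residual domain $U$ is contained in $B(R)$ as well. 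Indeed $cl_S U$ is connected, $cl_S U \cap K \ne \emptyset$ (the frontier is nonempty since $U \ne S$, as $U$ is bounded while $S$ is not), and if $cl_S U$ met the complement of $B(R)$ it would, by connectedness, meet the sphere $\{d(\cdot,\text{base}) = R\}$, which is disjoint from $K$; but a point of $cl_S U$ outside $K$ lies in $U$ (since $cl_S U \subset U \cup K$), so that sphere would meet $U$ — fine, that alone is not a contradiction.

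So the boundedness argument needs one more idea: the unbounded residual domain. Let $S$ be noncompact (if $S$ is compact the statement is trivial) and fix a basepoint $x_0 \notin K$ (possible since $K$ is compact, $S$ noncompact, $S$ connected). Then the residual domain $U_\infty$ containing $x_0$ contains points arbitrarily far from $x_0$? Not necessarily — $U_\infty$ could be bounded too. The correct structural fact is: since $K$ is compact and $S$ is complete and noncompact, $S - K$ has at least one unbounded component, and more importantly, the set of bounded residual domains is exactly the set of residual domains $U$ with $cl_S U$ compact, and $cl_S U \subset S$ always. I would show $\widehat K$ is bounded as follows: choose $R$ with $K \subset B(R)$; the sphere $\Sigma_R = \{d(\cdot,x_0)=R\}$ is compact and disjoint from $K$, so $\Sigma_R$ lies in $S-K$; since $\Sigma_R$ separates $B(R)$ from its exterior and is compact, each of its (finitely many, if we take $R$ generic, or in any case) components lies in a single residual domain, and these residual domains are unbounded (they contain points outside $B(r)$ for every $r$, by completeness and connectedness of $S$ walking out along a ray). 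Hence any \emph{bounded} residual domain is disjoint from $\Sigma_R$ and, being connected and meeting $B(R)$ (its frontier is in $K \subset B(R)$), must lie entirely inside $B(R)$. Therefore $\widehat K \subset cl_S B(R)$, which is compact by completeness (Hopf–Rinow), and in particular $\widehat K$ is bounded.

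For closedness: suppose $x_n \in \widehat K$ with $x_n \to x \in S$. If infinitely many $x_n$ lie in $K$ then $x \in K \subset \widehat K$. Otherwise, passing to a subsequence, each $x_n$ lies in a bounded residual domain $U_n$. If $x \in K$ we are done; if $x \notin K$, then $x$ lies in some residual domain $U$, which is open, so $U$ contains $x_n$ for large $n$, forcing $U_n = U$ for large $n$, and $U$ is bounded (it equals some $U_n$), so $x \in U \subset \widehat K$. In all cases $x \in \widehat K$, so $\widehat K$ is closed. Being closed and bounded in the complete surface $S$, $\widehat K$ is compact.

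**Main obstacle.** The only real subtlety is the boundedness step — specifically, pinning down that spheres $\Sigma_R$ of large radius lie in unbounded residual domains and that no bounded residual domain can escape $B(R)$. One must be slightly careful that $S$ may have boundary (then use $\partial$-preserving exhaustion arguments or just that the metric ball $B(R)$ still has compact closure by completeness) and that $\Sigma_R$ need not be a nice manifold for every $R$; this is handled by noting we only need $\Sigma_R \subset S-K$ and that $cl_S U$, being connected and meeting both $B(R)$ and — if unbounded — the exterior, must meet $\Sigma_R$, a contradiction. Everything else is point-set topology of the decomposition $S = K \sqcup (\text{residual domains})$ together with Hopf–Rinow.
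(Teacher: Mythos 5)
Your closedness step is correct and matches standard point-set topology: a convergent sequence in $\widehat K := K \cup \bigcup(\text{bounded residual domains})$ has its limit either in $K$ or in a residual domain $U$ that, being open and therefore eventually containing the sequence, must coincide with one of the bounded $U_n$. No issues there.

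The boundedness step, however, has a genuine gap. The key assertion is that every residual domain meeting $\Sigma_R = \{d(\cdot,x_0)=R\}$ is unbounded, with justification ``they contain points outside $B(r)$ for every $r$, by completeness and connectedness of $S$ walking out along a ray.'' This is false, and the justification does not produce a ray lying inside the residual domain. A residual domain can meet $\Sigma_R$ and still be bounded: take a complete metric on $\re^2$ that creates a large ``pocket'' attached to the rest of the plane by a thin neck, and let $K$ be a small circle sitting across that pocket near the neck. The far side of the pocket is a bounded residual domain $U$, and one can choose $x_0$ and $R$ with $K\subset B(R)$ while $U$ still contains points at distance greater than $R$ from $x_0$, so $U$ meets $\Sigma_R$. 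Geodesic rays from $x_0$ need not stay in $U$, so the ``walk out along a ray'' reasoning fails. Consequently your conclusion that every bounded residual domain sits inside the fixed ball $B(R)$ is not established, and in fact it is not true in general.

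The argument can be repaired, and the repair is essentially the paper's proof. Rather than claiming residual domains meeting $\Sigma_R$ are unbounded, observe that $\Sigma_R$ is compact and disjoint from $K$, hence covered by finitely many of the pairwise disjoint open residual domains. Therefore only finitely many residual domains escape $B(R)$. Those among them that are bounded are each relatively compact by Hopf--Rinow, and the remaining bounded residual domains together with $K$ sit inside the compact $\ov{B(R)}$. Combined with your closedness argument this gives compactness of $\widehat K$. The paper achieves the same finiteness more directly by replacing the metric ball with a compact bordered-surface neighborhood $F$ of $K$: any residual domain not contained in $F$ must contain a component of $\partial F$, and $\partial F$ has only finitely many components; this avoids the need to invoke compactness of a possibly wild level set $\Sigma_R$. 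Both routes work, but the step you actually wrote down does not.
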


\begin{proof}\quad

Let $F$ be a compact bordered surface which is a neighborhood of $K$ in $S$.
Let $V$ be the union of the residual domains of $K$ contained in $F$.
Then  $K\cup V$ is closed in $S$,
 and since $K\cup V\subset F$, we have that $K\cup V$ is compact.

Every residual domain of $K$ not contained in $F$ contains at least one connected
\linebreak
component of $\partial F$. 
Therefore at most finitely many residual domains of $K$ are not contained in $F$.
 Since $S$ is complete, any bounded set in $S$ is relatively compact.
Let $W$ be the union of the bounded residual domains of $K$ not contained in $F$.
Then $K\cup V\cup W$ is closed in $S$ and since $W$ is a finite union of relatively compact sets,
we have that $K\cup V\cup W$ is compact.

\end{proof}

\begin{Proposition}\label{P14}\quad

Let $K$ be a compact subset of a connected surface $S$. Then $b(S)$ is  homeomorphic
to a compact subset of $b(S-K)$.
\end{Proposition}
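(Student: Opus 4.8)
The plan is to construct an explicit injection $\phi\colon b(S)\to b(S-K)$, prove it is continuous, and then conclude that it is a homeomorphism onto its image by invoking compactness of $b(S)$ and the Hausdorff property of $b(S-K)$.

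First I would set up the map. Given $b\in b(S)$, pick an ideal boundary component $(V_n)$ representing it. By Remark~\ref{R1} we have $\bigcap_n cl_SV_n=\emptyset$, and since the sets $cl_SV_n$ are decreasing and $K$ is compact, finitely many of the open sets $S-cl_SV_n$ cover $K$; hence there is $N$ with $cl_SV_N\cap K=\emptyset$, and therefore $cl_SV_n\cap K=\emptyset$ for all $n\ge N$. Discarding the first $N-1$ terms, we may assume $cl_SV_n\cap K=\emptyset$ for every $n$. Then each $V_n$ is open in $S-K$, one has $cl_{S-K}V_n=cl_SV_n$ (hence non compact) and $fr_{S-K}V_n=fr_SV_n$ (hence compact), and every compact subset of $S-K$ is a compact subset of $S$, so $(V_n)$ leaves it; thus conditions \eqref{c1}--\eqref{c5} hold and $(V_n)$ is an ideal boundary component of $S-K$. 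I set $\phi(b)$ to be the ideal boundary point of $S-K$ it represents.

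The observation that makes $\phi$ well defined and injective is that the equivalence relation ``for every $n$ there is $m$ with $V_m\subset V_n'$ and vice versa'' refers only to the sets themselves, not to the ambient surface. So if $(V_n)$ and $(W_n)$ are tails with closures disjoint from $K$ of representatives of points $b,b'\in b(S)$, then $(V_n)\sim(W_n)$ in $S$ if and only if $(V_n)\sim(W_n)$ in $S-K$; this yields at once independence of all choices and injectivity of $\phi$. For continuity I would check preimages of the standard basis: let $A$ be open connected in $S-K$ with $fr_{S-K}A$ compact, with associated basic open set $A'\subset b(S-K)$. If $b=(V_n)\in\phi^{-1}(A')$ with the $V_n$ chosen as above, then $V_n\subset A$ for some $n$; but $V_n$ is also open connected in $S$ with $fr_SV_n$ compact, so it determines a basic open set $V_n'\ni b$ in $b(S)$. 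For any $c=(U_k)\in V_n'$ one has $U_k\subset V_n$ for large $k$, so $cl_SU_k\subset cl_SV_n$ is disjoint from $K$, whence $\phi(c)$ is represented by that tail of $(U_k)$, which lies in $V_n\subset A$; thus $\phi(c)\in A'$, so $V_n'\subset\phi^{-1}(A')$ and $\phi$ is continuous.

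Finally, $b(S)$ is closed in the compact space $B(S)$ of Proposition~\ref{P3}, hence compact (and empty if $S$ is compact), while $b(S-K)$ is Hausdorff, being the disjoint union of the Hausdorff ideal boundaries of the connected components of $S-K$ as in the subsection on more general surfaces. A continuous injection from a compact space into a Hausdorff space is a homeomorphism onto its image, which is compact; so $b(S)$ is homeomorphic to the compact subset $\phi(b(S))$ of $b(S-K)$. I expect the only genuinely delicate point to be bookkeeping the frontiers and tails so that $(V_n)$ really becomes an ideal boundary component of $S-K$, together with recording that $b(S)$ is compact and $b(S-K)$ Hausdorff even when $S-K$ is disconnected or has infinitely many components; the rest is routine.
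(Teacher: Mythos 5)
Your proof is correct and takes essentially the same approach as the paper: trim the representing ideal boundary component so its closures are disjoint from $K$, verify conditions \eqref{c1}--\eqref{c5} in $S-K$ to get the injection $\phi$, check continuity on basic sets, and conclude from compactness of $b(S)$. The only cosmetic differences are that the paper invokes Remark~\ref{R2} rather than Remark~\ref{R1} to obtain $cl_S V_n\subset S-K$, shows $\phi^{-1}(A')$ is the basic set in $b(S)$ determined by the same $A$ rather than passing through an auxiliary neighborhood $V_n'$, and leaves implicit the Hausdorffness of $b(S-K)$ that you correctly spell out.
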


\begin{proof}\quad

It follows from Remark~\ref{R2} that any ideal boundary point $b\in b(S)$ can be represented 
by an ideal boundary component $(V_n)$ such that $cl_S V_n\subset S-K$.
We have that $cl_{S-K}V_n=(S-K)\cap cl_S V_n$ and since $S-K$ is open in $S$
we have that $fr_{S-K} V_n=(S-K)\cap fr_S V_n$.
From this it follows that $cl_{S-K} V_n = cl_S V_n$,
$fr_{S-K} V_n = fr_S V_n$ and the sequence of sets $(V_n)$ 
also defines an ideal boundary point $b'\in b(S-K)$. This correspondence
provides an one-to-one mapping $\phi:b(S)\to b(S-K)$, $\phi(b)=b'$.

Basic sets for the topology of $b(S-K)$ are defined by means of connected
subsets $A$ of $S-K$, such that $A$ is open in $S-K$ and $fr_{S-K}A$ is compact.
Since $fr_SA=fr_{S-K}A\cup (fr_S A\cap K)$, we also have that $A$ is open in $S$, 
connected and $fr_SA$ is compact. Therefore the sets $A$ define basic open sets for the
topologies of $b(S)$ and $b(S-K)$, which we denote by $A'_1$ and $A'_2$, respectively.
From the definition of ideal boundary component it follows that $\phi^{-1}(A'_2)=A'_1$, hence
$\phi$ is continuous.
Since $b(S)$ is compact, it follows that $\phi$ is a homeomorphism from $b(S)$ onto
a compact subset of $b(S-K)$.

\end{proof}

We will just think of $b(S)$ as a subset of $b(S-K)$.
Later in Corollary~\ref{C325} we will see that if $K$ has finitely many connected components then 
$b(S-K)-b(S)$ is a discrete topological space and that $b(U)-b(S)$
is a finite set for every residual domain $U$ of $K$.

First we consider the case when $S$ is a compact surface possibly 
with boundary.

\begin{Proposition}\label{P15}
\quad

Let $S$ be a compact connected surface possibly with boundary and $K$ 
a compact subset of the interior $S^\circ$. Assume that $K$ has $m$ connected components.
If $U$ is a residual domain of $K$, then $U$ has at most $m(g+1)$
 ideal boundary points, where $g$ is the genus of $S$.
\end{Proposition}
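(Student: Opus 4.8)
The plan is to use the Euler-characteristic / genus count for a compact bordered surface together with a careful count of how many boundary components of $U$ can arise. Since $U$ is a residual domain of $K$, its frontier in $S$ is contained in $K$, so $cl_S U$ is a compact connected bordered surface; let $U_0$ be the bordered surface obtained from $cl_S U$ by adding $cl_S U\cap\partial S$ (which contributes only to $\partial U_0$, not to $b(U)$). The ideal boundary points of $U$ correspond precisely to the boundary components of $U_0$ that are \emph{not} contained in $\partial S$, i.e. to the boundary circles that touch $K$. So the first step is to bound the number of such ``inner'' boundary components of $cl_S U$, and the second is to note that each such component supplies at most one ideal boundary point of $U$ (in fact exactly one, by condition~\eqref{cexh}-type reasoning on the canonical exhaustion, but for an upper bound ``at most one'' suffices).

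For the counting step I would argue as follows. Since $K$ has $m$ connected components $K_1,\dots,K_m$, the frontier $fr_S U\subset K$ decomposes into at most $m$ pieces, one adjacent to each $K_j$ that $U$ meets; but a single $K_j$ can be adjacent to $U$ along several boundary circles of $cl_S U$ when $K_j$ is not simply connected. The number of boundary circles of $cl_S U$ adjacent to $K_j$ is bounded by the number of boundary components of a regular neighborhood $N_j$ of $K_j$ in $S$; since $N_j$ is a compact connected bordered surface embedded in $S$, its genus is at most $g$ and hence (by the classification of compact bordered surfaces, $\chi(N_j)=2-2g_j-r_j$ with $r_j$ boundary circles, together with the fact that $N_j$ embeds in $S$ so $g_j\le g$ and the number of its boundary components is controlled) the number of boundary circles is at most $g+1$. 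Thus $U$ is adjacent to $K_j$ along at most $g+1$ circles, for each of the at most $m$ values of $j$, giving at most $m(g+1)$ inner boundary circles of $cl_S U$, hence at most $m(g+1)$ ideal boundary points of $U$.

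More precisely, to make the bound ``$g+1$ boundary circles of a regular neighborhood'' rigorous I would invoke that a compact connected subset $K_j$ of the interior of $S$ has a regular neighborhood $N_j$ which is a compact connected bordered surface with $cl_S U$ glued to it along some of its boundary circles; the union $\bigcup_j N_j$ together with a collar of $\partial S$ sits inside a compact bordered surface $F\subset S$, and one can take $F$ to realize an exhaustion step, so that each complementary piece of $F$ (in particular the piece inside $U$) has connected boundary by property~\eqref{cexh}. Running this with finer and finer $F$ along the canonical exhaustion, the connected components of $S-F_n$ inside $U$ that do not die are in bijection with the ideal boundary points of $U$ (Proposition~\ref{P9}), and at stage $n$ their number is bounded by the number of boundary circles of $F_n\cap cl_S U$ that lie over $K$, which in turn is bounded by $\sum_j r(N_j)$ where $r(N_j)\le g+1$. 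Therefore $U$ has at most $m(g+1)$ ideal boundary points.

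\textbf{Main obstacle.} The delicate point is the bound $r(N_j)\le g+1$ on the number of boundary components of a connected bordered subsurface $N_j$ of $S$ arising as a regular neighborhood of $K_j$: a priori $K_j$ could be wild (it is only assumed compact connected, not a subcomplex), so one must either first replace $K_j$ by a regular neighborhood that is genuinely a bordered surface, or argue directly with the exhaustion $(F_n)$ and count boundary circles of $F_n$ lying ``over'' each $K_j$. The cleanest route is the exhaustion one: at each stage the complementary components inside $U$ are in bijection with boundary circles of $F_n$ inside $cl_S U$ that separate toward $K$, and since $F_n$ is a compact connected bordered surface of genus $\le g$ (being embedded in the genus-$g$ surface $S$), the number of such circles grouped around a single connected component $K_j$ of $K$ is at most $g+1$ by the relation $\chi = 2-2(\text{genus}) - (\#\text{boundary components})$ applied to the subsurface of $F_n$ ``cut out'' near $K_j$. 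Verifying that this subsurface is connected and genuinely has genus $\le g$ — i.e. that the pieces around distinct $K_j$'s do not interact and that no excess genus is hidden — is the step that needs the most care, and is exactly where the classification of compact surfaces with boundary developed earlier does the work.
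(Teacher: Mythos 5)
There is a genuine gap. Your central claim — that a compact connected bordered subsurface $N_j$ of $S$ (a regular neighborhood of $K_j$) has at most $g+1$ boundary circles — is false, and the Euler-characteristic justification you sketch cannot repair it. Concretely, take $S = S^2$ (so $g=0$) and $K = K_1$ a wedge of $n$ circles in $S^\circ$; a regular neighborhood $N_1$ of $K_1$ is a connected planar surface with $n+1$ boundary circles, which exceeds $g+1 = 1$ as soon as $n\ge 1$. The proposition's conclusion is nevertheless true in this example because each of those $n+1$ circles faces a \emph{different} residual domain of $K$, so the number of boundary circles of $N_1$ facing any single $U$ is $1 = g+1$. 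Your argument loses exactly this point: the quantity that must be bounded is not the total number of boundary components of $N_j$, but the number of those that bound the fixed residual domain $U$. The Euler-characteristic route also goes the wrong way: from $\chi(N_j) = 2 - 2g_j - r_j$ and $g_j\le g$ you get a \emph{lower} bound on $r_j$, not an upper one; an upper bound on $r_j$ would require a lower bound on $\chi(N_j)$, which is not available a priori (and is precisely what fails in the wedge example, where $\chi$ can be arbitrarily negative).

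The paper avoids all of this by taking an exhaustion $(F_n)$ of the residual domain $U$ itself (so all of $\partial F_n$ already faces $U$, sidestepping the counterexample above), looking at the complementary components $E_1,\dots,E_k$ of $S-F_n$, and bounding the number $\nu_i$ of components of $\partial E_i-\partial S$ by a connectivity argument rather than by Euler characteristic: if $\nu_i\ge g+2$, then after deleting $g+1$ of those circles $C_1,\dots,C_{g+1}$ from $S$, the piece $E_i^\circ\cup(\partial S\cap\partial E_i)$ is still glued to $F_n\cup\bigcup_{j\ne i}E_j$ through the remaining circle $C$ (the disjointness of the $\partial E_i$'s is used here), so $S-(C_1\cup\cdots\cup C_{g+1})$ stays connected — contradicting that the genus of $S$ is $g$. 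Combined with the count $k\le m$ (each $E_i$ contains a component of $K$), this gives $\eta_n\le m(g+1)$ and Remark~\ref{R10} finishes. This connectivity argument is the missing step that would need to replace your Euler-characteristic claim, and working directly with the exhaustion of $U$ also disposes of the issue you flag — that $K_j$ may be wild and $cl_S U$ need not be a bordered surface — since $F_n$ is a genuine compact bordered surface by construction.
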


\begin{proof}\quad

By taking the union of $K$ with all its residual domains different from $U$,
we may assume that $K\cup U=S$.

Since $K\subset S^\circ$, $U$ is a submanifold of $S$. 
Let $(F_n)$ be a exhaustion of $U$.

We first prove  that if $W$ is a connected component of $S-F_n$,
then $W$ contains at least one of the connected components of $K$.
Since $K\subset S-F_n$, it is enough to show that $W\cap K\ne \emptyset$.
Let us assume by contradiction that $W\cap K=\emptyset$.
It follows that $W\subset U$ and that $W$ is contained in a connected component $V$
of $U-F_n$. But every component of $U-F_n$ is contained in a component of $S-F_n$.
From this we conclude that $W=V$. 
We have that $\partial U=\partial S\cap  U\subset F^\circ_n$ and then 
$cl_SW\cap \partial S=\emptyset$.
Also
$cl_SW=W\cup C_1\cup\cdots \cup C_\ell$, where each $C_i$ is a connected
component of $\partial F_n$.
Therefore $cl_SW\subset U$ and $cl_UV=U\cap cl_SV=U\cap cl_SW=cl_SW$ is compact.
But since $(F_n)$ is an exhaustion of $U$, we have that a connected component $V$
of $U-F_n$ is not relatively compact in $U$, a contradiction.

Let $E_1,\ldots,E_k$ be  the closures in $S$ of the  connected components of $S-F_n$.
Since every component of $S-F_n$ contains a component of $K$,
we have that 
$$
k\le m.
$$

We claim that the boundaries  
\begin{equation}\label{bedis}
\text{$\partial E_i$ are disjoint.}
\end{equation}
Indeed, we have that $\partial E_j\subset \partial (S-F_n)\subset \partial S\cup \partial F_n$.
A connected component of $\partial E_i\cap \partial E_j$ would be a component $D$
of $\partial F_n$ which is the boundary of both $E_i$ and $E_j$ (whose interior is disjoint).
Then $E_i\cup E_j$ contains a tubular neighborhood of $D\subset F_n$.
This contradicts the fact that $F_n$ is a surface.

Let $\nu_i$ be the number of components of $\partial E_i-\partial S$.
We claim that 
$$
\nu_i\le g+1.
$$
In fact, if we had $\nu_i\ge g+2$,
then by removing $g+1$ components $C_1,\ldots,C_{g+1}$ of $\partial E_i-\partial S$
from $S$, we would obtain a disconnected set $S-(C_1\cup\cdots\cup C_{g+1})$.
On the other hand, at least one component $C$ of $\partial E_i-\partial S$ remains in
$S-(C_1\cup\cdots\cup C_{g+1})$. 

By \eqref{bedis} the connected components $E_j$, $j\ne i$, attach to $F_n$
through boundary components which are disjoint from $C_1,\ldots, C_{\nu_i}$.
Therefore  the sets $E^\circ_i\cup(\partial S\cap \partial E_i)$  and
$$
F_n\cup(\cup_{j\ne i}E_j) -(C_1\cup\cdots\cup C_{g+1})
$$
 are connected
and their union is $S-(C_1\cup\cdots\cup C_{g+1})$.
But inside $S-(C_1\cup\cdots\cup C_{g+1})$ these sets are glued through $C$.
Therefore $S-(C_1\cup\cdots\cup C_{g+1})$ is connected, a contradiction.

Let $\eta_n$ be the number of connected components of $\partial F_n-\partial S$.
Since 
$$
\partial F_n-\partial S\subset \partial(S-F_n)-\partial S\subset \cup_{i=1}^k (\partial E_i-\partial S),
$$
we have that 
$$
\eta_n\le \nu_1+\cdots+\nu_k\le k(g+1)\le m(g+1).
$$
Observe that every component of $U-F_n$ contains a component of $\partial F_n-\partial S$.
It follows form Remark~\ref{R10} that $U$ has at most $m(g+1)$
ideal boundary points.

\end{proof}

\begin{Remark}\label{r316}\quad

If $K\cap\partial S\ne\emptyset$ we can apply Proposition~\ref{P15} to
the augmented boundaryless surface $S^*$ from~\eqref{s*} and to 
$K^*=K\cup(S^*-S^\circ)$.
The interior of the connected components of $S-K$ and $S^*-K^*$ are the same.
If $K$ has $m$ connected components, $\partial S$ has $n$ component,
$g$ is the genus of $S$ and $U^*$ is a
connected component of $S^*-K^*$ (or of $S^\circ-K$) then 
$$\# b(U^*)\le (m+n) (g+1)$$
because $K^*$ has at most $m+n$ connected components.
\end{Remark}

\subsection{The canonical exhaustion of an unbounded residual domain.}
\label{canonical}
\quad

If $S$ is a surface with compact boundary and $K\subset S$ is a compact subset
with \linebreak
$K\cap \partial S\ne \emptyset$ then a connected $U$ component of 
$S-K$ may not be a surface with compact boundary.
In this case we can use the augmented boundaryless surface $S^*$
from~\eqref{s*} and $K^*=K\cup(S^*-S^\circ)$ as in Remark~\ref{r316}.

Let $S$ be a non compact connected boundaryless surface.
Endow $S$ with a complete metric.
Let $K$ be a compact subset of $S$ and
let $U$ be a residual domain of $K$,
i.e. a connected component of $S-K$.

We are going to construct an adapted exhaustion of $U$
in order to express its ideal boundary points in terms of
ideal boundary points of $S$ and ideal boundary points of $U\cap N$,
where $N$ is a compact surface which is a neighborhood of $K$ in $S$.
The ideal boundary points of $U$ that are relatively compact in $S$ will correspond
to the ideal boundary points of $U\cap N$ and the ideal boundary points of $U$ 
that are not relatively compact in $S$ will correspond to the ideal boundary points of $S$.

The construction will be applicable to both cases, $U$ unbounded or not.

As a consequence, we will see that if $K$ has finitely many components 
then $U$ has only finitely many ideal boundary points which are relatively
compact in $S$ and that the others are ideal boundary points of $S$.
As a corollary we have that $fr_SU$ has finitely many connected components,
a result that is not valid in all topological spaces.

Let $(F_n)_{n\ge 0}$ be an exhaustion of $S$. 
By Proposition~\ref{P13}, we may assume that
$K$ and its bounded residual domains are contained in $F_0^\circ$.
Therefore a residual domain of $K$ is unbounded if and only if 
it is not contained in $F_0$.

\begin{claim}\label{24c0}\quad
 
Any unbounded component $U$ of $S-K$ contains at least one component of $\partial F_0$.
\end{claim}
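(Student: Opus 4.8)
The plan is to argue by contradiction, exploiting the fact that the components of $\partial F_0$ separate $S$ into $F_0$ and the unbounded residual domains of $F_0$, together with the exhaustion property~\eqref{cexh}. Suppose $U$ is an unbounded component of $S-K$ that contains no component of $\partial F_0$. Since $U$ is connected, disjoint from $\partial F_0$, and $\partial F_0$ separates $S^\circ\setminus\partial F_0$ into $F_0^\circ$ and the components of $S-F_0$, it follows that $U$ is entirely contained in one of these pieces. If $U\subset F_0^\circ$, then $cl_SU\subset F_0$ is compact, so $U$ is bounded, contradicting the hypothesis. Hence $U$ is contained in a single connected component $W$ of $S-F_0$.

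Next I would use the exhaustion property. By~\eqref{cexh}, $cl_SW$ is a non-compact bordered surface whose boundary is exactly one component $C$ of $\partial F_0$. The point is that $C\subset F_0\subset F_0^\circ\subset S-K$ (recall $K$ and its bounded residual domains lie in $F_0^\circ$, and $C$ lies in the compact set $F_0$, but more simply $C\cap K=\emptyset$ since $K\subset F_0^\circ$ and $C\subset\partial F_0$ is disjoint from $F_0^\circ$). Wait — here I must be careful: $C\subset\partial F_0$ is contained in $F_0$ but not in $F_0^\circ$, so $C\cap K=\emptyset$ because $K\subset F_0^\circ$. Thus $C$ lies in $S-K$, so $C$ is contained in some residual domain of $K$; since $C$ is connected and $C\subset cl_SW$ with $W\supset U$, and $W$ meets only the residual domains of $K$ that are $\subseteq W$, the component of $S-K$ containing $C$ is one contained in $W$. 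I claim it must be $U$ itself: indeed $W$ is connected and $W\subset S-F_0\subset S-K$ is... no, $W$ need not be contained in $S-K$ since $K\subset F_0^\circ$ gives $K\cap W=\emptyset$ — yes it is, $K\cap W=\emptyset$ because $W\subset S-F_0$ and $K\subset F_0$. Therefore $W\subset S-K$, $W$ is connected, so $W$ lies in a single residual domain of $K$; since $U\subset W$, that domain is $U$, forcing $U=W$.

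Finally, $U=W$ gives the contradiction directly: $U=W\subset S-F_0$, so $U$ contains no point of $F_0$, in particular no point of the connected curve $C=\partial_S(cl_SW)\subset F_0$; but $C\subset cl_SW=cl_SU$, so $C$ consists of frontier points of $U$ lying in $F_0\subset S-K\subset$... this does not yet contradict anything. The real contradiction is cleaner: $W$ is a component of $S-F_0$ and simultaneously equals the residual domain $U$ of $K$; by~\eqref{cexh} applied to the exhaustion of $S$ at level $0$, $W\cup C=cl_SW$ is a bordered surface, so $U\cup C$ is an open subset of $S$ union its frontier curve $C$, and $C\subset F_0^\circ$ is false — rather $C\subset\partial F_0$. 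Then $U$ and $C$ together are disjoint from $K$, so $U\cup C$ is a connected subset of $S-K$ strictly larger than the component $U$, which is absurd. This last line — that $U\cup C\subseteq S-K$ is connected and strictly contains the component $U$ — is the contradiction, and identifying it correctly (that $C$ avoids $K$ while touching $U$) is the one delicate point; everything else is the standard separation argument for exhaustions. $\blacksquare$
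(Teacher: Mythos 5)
Your argument is broadly correct in spirit but has one genuine gap right at the start: you negate the claim to ``$U$ contains no component of $\partial F_0$'' and then immediately assert that $U$ is \emph{disjoint} from $\partial F_0$. These are not a priori the same thing. What bridges them is precisely the observation the paper makes at the end of its own (direct) proof: $\partial F_0\subset S-K$ because $K\subset F_0^\circ$, so any component $C$ of $\partial F_0$ is a connected subset of $S-K$; if $C$ meets the component $U$ of $S-K$, then $C\subset U$. Hence ``contains no component of $\partial F_0$'' does imply ``disjoint from $\partial F_0$'' --- but you have to say so. Once you notice this equivalence you have already done the hard part, and the contradiction machinery that follows becomes avoidable.

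Once the gap is filled, the rest of your argument works: $U$ lies in $F_0^\circ$ (impossible, $U$ unbounded) or in a component $W$ of $S-F_0$, and since $W\cap K=\emptyset$ and $U$ is a component of $S-K$ meeting $W$, one gets $U=W$; then the single boundary curve $C=\partial(cl_S W)$ guaranteed by exhaustion property~\eqref{cexh} satisfies $C\subset\partial F_0\subset S-K$ and $C\subset cl_S U$, so $U\cup C$ is a connected subset of $S-K$ properly containing the component $U$, a contradiction. (A slightly crisper ending: $fr_S U\subset K$ always, yet $fr_S U=fr_S W=C$ would be disjoint from $K$.) By contrast, the paper argues directly in four lines: $fr_S U\subset K\subset F_0^\circ$ shows $U$ meets $F_0$, unboundedness shows $U$ meets $S-F_0$, connectedness of $U$ gives $U\cap\partial F_0\ne\emptyset$, and then the same ``$C$ connected, disjoint from $K$, meets $U$'' step yields $C\subset U$. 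The paper's route never needs the exhaustion property at level $0$, whereas yours invokes~\eqref{cexh} to identify the frontier of $W$; both are fine, but the direct version is shorter and contains exactly the lemma you need to close your gap, so in practice the contradiction framing buys you nothing here.
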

\noindent{\it Proof:}

We have that  $U$ intersects both $F_0$ (since $fr_SU\subset K\subset F^\circ_0$) 
and $S-F_0$, implying that $U$ intersects $\partial F_0$.
Therefore some component $C$ of $\partial F_0$ intersects $U$,
and since $\partial F_0\subset S-K$ we have that $C\subset U$.

\qed

From this we conclude that $K$ has finitely many unbounded residual domains.
Let 
\begin{equation}\label{uu}
U_+=U-F^\circ_0
\qquad\text{ and }\qquad
U_-=U\cap F_0.
\end{equation}
Then $U_+$ and $U_-$ are surfaces with compact boundary
and their boundary as surfaces is 
\begin{equation}\label{bu}
\partial U_+=\partial U_-=U\cap \partial F_0=U_+\cap U_-.
\end{equation}
Observe that 
\begin{equation}\label{buiu}
U\cap\partial F_0=\partial U_+\subset U_+.
\end{equation}
Denote the connected components
of $\partial U_+=\partial U_-=U\cap \partial F_0$
by $\xi_1,\ldots,\xi_s$.

\begin{claim}\label{24c1}\quad

The components of $U_+=U-F^\circ_0$ are the 
components of $S-F^\circ_0$ contained in $U$.
\end{claim}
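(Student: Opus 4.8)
The plan is to establish the two set-theoretic inclusions between the collection of connected components of $U_+$ and the collection of those connected components of $S-F_0^\circ$ that are contained in $U$. The only input needed is that, by our choice of the exhaustion $(F_n)$, the compact set $K$ lies in $F_0^\circ$, so that
$$
S-F_0^\circ\subseteq S-K,\qquad\text{hence}\qquad U_+=U-F_0^\circ=U\cap(S-F_0^\circ).
$$

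First I would check that every connected component $W$ of $S-F_0^\circ$ with $W\subseteq U$ is a connected component of $U_+$. Indeed $W\subseteq U\cap(S-F_0^\circ)=U_+$; and if $W'$ denotes the component of $U_+$ containing $W$, then $W'$ is a connected subset of $U_+\subseteq S-F_0^\circ$ containing $W$, so maximality of $W$ among connected subsets of $S-F_0^\circ$ forces $W'=W$.

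Conversely, let $W$ be a connected component of $U_+$ and let $\tilde W$ be the component of $S-F_0^\circ$ containing the connected set $W$. Then $\tilde W$ is a connected subset of $S-F_0^\circ\subseteq S-K$ which meets $U$ (it contains the nonempty set $W$); since $U$ is a connected component of $S-K$, this forces $\tilde W\subseteq U$, and therefore $\tilde W\subseteq U\cap(S-F_0^\circ)=U_+$. As $W$ is a maximal connected subset of $U_+$ and $W\subseteq\tilde W$, we conclude $\tilde W=W$, so $W$ is a component of $S-F_0^\circ$ contained in $U$, completing the argument.

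There is no serious obstacle here; the one point to handle with care is to keep the interior $F_0^\circ$ (rather than $F_0$) in the statement, since it is precisely the inclusion $K\subseteq F_0^\circ$ that makes $S-F_0^\circ$ a subset of $S-K$ and thus lets components of $S-F_0^\circ$ nest inside the single component $U$ of $S-K$. No appeal to openness or to the surface structure is required — everything follows from the fact that a component is a maximal connected subset.
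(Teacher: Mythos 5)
Your proof is correct and follows essentially the same route as the paper: both rely on the inclusion $K\subseteq F_0^\circ$ to get $S-F_0^\circ\subseteq S-K$, and then use maximality of connected components to show a component of $U_+$ coincides with the component of $S-F_0^\circ$ containing it (and that component lands inside $U$). The only cosmetic difference is that you spell out both inclusions explicitly, whereas the paper argues one direction and leaves the converse implicit.
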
 
\noindent{\it Proof:} 

Let $C$ be a component of $U-F^\circ_0$.
We have that $U-F^\circ_0\subset S-F^\circ_0$
and therefore $C$ is contained in a component $D$
of $S-F^\circ_0$.
We will show that $C=D$.
We first prove that $D$ is contained in $U$.
Indeed, since $S-F^\circ_0\subset S-K$ we have that
$D$ is contained in a component of $S-K$ such as $U$.
But $C\subset D$ and $C\subset U$ implies that 
$D\cap U\ne \emptyset$ and therefore $D\subset U$.
Now we have that $D\subset U-F^\circ_0$ and therefore
$D$ must be contained in a component of $U-F^\circ_0$.
Since $C\cap D\ne\emptyset$, we have that $C=D$.
This proves the claim.

\qed

We have that 
$F_0$ is part of an exhaustion of $S$.
Therefore every component of $S-F^\circ_0$
is a surface whose boundary consists exactly of one
component of $\partial F_0$.
From Claims~\ref{24c0} and~\ref{24c1} we conclude that $U_+$ has $s$ connected components,
each one a surface with compact boundary consisting of exactly
one of the curves $\xi_k$.

\begin{claim}\label{24c2} The set $U_-=U\cap F_0$ is connected.
\end{claim}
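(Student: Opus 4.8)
The plan is to show directly that $U_- = U \cap F_0$ is connected by using the fact that $U$ is connected and ``almost all'' of $U$ lies in $F_0$. The key structural input is the description of $U_+ = U - F_0^\circ$ from Claims~\ref{24c0} and~\ref{24c1}: $U_+$ has exactly $s$ connected components, each one a surface with compact connected boundary consisting of a single curve $\xi_k \subset \partial F_0$, and each such component is a component of $S - F_0^\circ$ contained in $U$. Write $U_+ = P_1 \sqcup \cdots \sqcup P_s$ with $\partial P_k = \xi_k$.

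First I would observe that $U = U_- \cup U_+$ and that these two sets meet exactly along $\partial F_0 \cap U = \xi_1 \cup \cdots \cup \xi_s$ by~\eqref{bu}. Each $\xi_k$ is a connected curve lying in $U_-$ (it is in $F_0$) and also in the closure of $P_k$. Now suppose for contradiction that $U_-$ is disconnected, say $U_- = A \sqcup B$ with $A, B$ nonempty, relatively open in $U_-$, and disjoint. Each curve $\xi_k$, being connected and contained in $U_-$, lies entirely in $A$ or entirely in $B$; relabel so that $\xi_1,\ldots,\xi_j \subset A$ and $\xi_{j+1},\ldots,\xi_s \subset B$ for some $0 \le j \le s$. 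Then set
\[
A' = A \cup P_1 \cup \cdots \cup P_j, \qquad B' = B \cup P_{j+1} \cup \cdots \cup P_s.
\]
The point is that $A'$ and $B'$ are open in $U$, disjoint, and cover $U$: each $P_k$ is glued to $U_-$ only along $\xi_k$, which by construction lies in the same piece ($A$ or $B$) that we have attached $P_k$ to, so the gluing does not create any overlap between $A'$ and $B'$, and openness is preserved because a neighborhood in $U$ of a point of $\xi_k$ is contained in $P_k \cup (\text{small piece of } U_-)$, both on the same side. Since $U$ is connected, one of $A'$, $B'$ must be empty. If, say, $B' = \emptyset$, then $B = \emptyset$ and also there are no $P_k$ with $k > j$, i.e.\ $s = j$; then $U_- = A$ is connected, contradicting our assumption. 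So $U_-$ is connected.

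The main obstacle I anticipate is the bookkeeping needed to verify that $A'$ and $B'$ are genuinely open in $U$ and genuinely disjoint — that is, that the decomposition of $U_-$ restricts compatibly to the curves $\xi_k$ and that no $P_k$ straddles the partition. This is where the hypothesis that each $P_k$ has \emph{connected} boundary $\xi_k$ (a consequence of $F_0$ being part of an exhaustion, via condition~\eqref{cexh}) is essential: if some $P_k$ had two boundary curves, one in $A$ and one in $B$, the argument would collapse. I would make this rigorous by noting that $\partial F_0$ is a $1$-manifold, so $U$ has a collar neighborhood of each $\xi_k$ homeomorphic to $\xi_k \times (-1,1)$, with $\xi_k \times (-1,0] \subset U_-$ and $\xi_k \times [0,1) \subset P_k$; since $\xi_k$ is connected and lies in $A$ (say), the whole collar half $\xi_k \times (-1,0]$ lies in $A$ for small enough collar, so the union $A \cup P_k$ is open along $\xi_k$. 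Everything else is routine point-set topology.
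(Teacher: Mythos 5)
The proposal is correct and takes essentially the same approach as the paper: extend a putative disconnection of $U_-$ to a disconnection of $U$ by attaching each component $P_k$ of $U_+$ to the piece of $U_-$ containing its unique boundary curve $\xi_k$, using Claims~\ref{24c0} and~\ref{24c1}. The only cosmetic difference is that the paper phrases the disconnection in terms of \emph{closed} sets (so closedness of the combined pieces in $U$ is immediate, since $U_-=U\cap F_0$ is closed in $U$), whereas you use \emph{open} sets and hence need the collar argument along each $\xi_k$ to check openness in $U$; both routes are sound.
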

\noindent{\it Proof:}

Assume by contradiction that $U_-=A_1\cup A_2$ with $A_1\cap A_2=\emptyset$
and $A_i$ closed in $U_-$ and non empty.
Then 
$A_i$ is also closed in $U$.
Also 
$U_-$ contains all the components $\xi_1,\ldots,\xi_s$ of $\partial U_-=U\cap \partial F_0$.
Every $\xi_k$, being connected,  is contained in either $A_1$ or $A_2$.
Let $B_i$ be the union of the components of $U_+$ 
whose boundary curves $\xi_k$ are contained in $A_i$.
Since every component of $U_+$ contains exactly one $\xi_k$,
we have that $B_1\cap B_2=\emptyset$. Also each
$B_i$ is closed in $U$.
Therefore $A_i\cup B_i$, $i=1,2$ are disjoint 
subsets of $U$ that are closed in $U$ whose union is $U$.
That would imply that $U$ is disconnected,
a contradiction.

\qed

\begin{claim}\label{cu+fn}
$U_+\cap F_n= U\cap (F_n-F_0^\circ)$ is compact.
\end{claim}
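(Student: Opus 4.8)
The plan is to treat the equality and the compactness separately. First I would verify the set identity $U_+\cap F_n = U\cap(F_n-F_0^\circ)$ by pure set algebra: by the definition~\eqref{uu} of $U_+$,
$$
U_+\cap F_n = (U-F_0^\circ)\cap F_n = U\cap\bigl(F_n\cap(S-F_0^\circ)\bigr) = U\cap(F_n-F_0^\circ),
$$
which needs no hypothesis beyond the definitions.

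The real content is compactness. The key observation I would use is that, since $U$ is a residual domain of $K$, we have $fr_S U\subset K$, and by the choice of $F_0$ via Proposition~\ref{P13} we have $K\subset F_0^\circ$; hence $fr_S U\subset F_0^\circ$. Consequently $cl_S U - F_0^\circ = (U\cup fr_S U)-F_0^\circ = U - F_0^\circ$, and intersecting with $F_n$ yields $U\cap(F_n-F_0^\circ) = cl_S U\cap(F_n-F_0^\circ)$. Since $cl_S U$ is closed in $S$, while $F_n-F_0^\circ = F_n\cap(S-F_0^\circ)$ is a closed subset of the compact set $F_n$ and hence compact, the right-hand side is a closed subset of a compact set, and therefore compact. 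That completes the argument.

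I do not anticipate a genuine obstacle here. The only point that demands a little care is that one cannot directly claim that $U\cap(F_n-F_0^\circ)$ is closed in $S$, since $U$ is merely open; the device is to exploit the inclusion $fr_S U\subset F_0^\circ$ in order to replace $U$ by its closure $cl_S U$ before taking the intersection, after which compactness is immediate from the compactness of $F_n$.
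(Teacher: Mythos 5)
Your proof is correct and rests on the same essential observation as the paper's, namely that $fr_S U\subset K\subset F_0^\circ$ forces the frontier of $U$ to be disjoint from the compact set $F_n-F_0^\circ$. The paper implements this by bounding the frontier of $U\cap(F_n-F_0^\circ)$ directly to conclude closedness, while you use the marginally cleaner device of replacing $U$ with $cl_S U$ before intersecting; the two are equivalent in substance.
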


\noindent{\it Proof:}

From~\eqref{uu}, $U_+=U-F^\circ_0$.
This implies that $U_+\cap F_n= U\cap (F_n-F^\circ_0)$.
Since $F_n-F^\circ_0$ is compact, it is enough to prove that
$U\cap (F_n-F^\circ_0)$ is closed in $S$. 
Since $K\subset F^\circ_0$ and $fr_S U \subset K$
 we have that 
$$
fr_S(U)\cap (F_n-F^\circ_0)=\emptyset.
$$
Therefore 
$$
fr_S(U\cap(F_n-F^\circ_0))\subset
 [fr_S U \cap (F_n-F^\circ_0)] \cup [ U\cap fr_S(F_n-F^\circ_0)]
\subset U\cap (F_n-F^\circ_0).
$$
Hence $U\cap (F_n-F^\circ_0)$ is closed in $S$.

\qed

We construct the exhaustion of $U$ as follows.
Let $(E_n)_{n\ge 1}$ be an exhaustion of $U_-$.
We may assume that $E_1$ contains
 all the components of $\partial U_-=U\cap \partial F_0$. Then
 $E_n$ is a compact surface whose boundary
 contains all the components of $\partial U_-=U\cap \partial F_0$.

For $n\ge 1$, let 
\begin{equation}\label{Gn2}
G_n=E_n\cup(U_+\cap F_n)
\subseteq E_n\cup(F_n-F^\circ_0).
\end{equation}
Using Claim~\ref{cu+fn} we have that $G_n$ is compact. Also $\cup_{n\ge 1}G_n=U$.

\begin{claim} \label{c320}
Every component of 
$U_+\cap F_n\subseteq F_n-F^\circ_0$ contains a component of $\partial F_0$.
\end{claim}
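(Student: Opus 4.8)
The claim asserts that every connected component $C$ of $U_+ \cap F_n$ (which by Claim~\ref{cu+fn} is a compact subset of $F_n - F_0^\circ$) contains one of the curves $\xi_1,\dots,\xi_s$, i.e.\ a component of $\partial F_0$ that lies in $U$. The plan is to trace $C$ outward to the boundary $\partial F_n$ using the exhaustion structure, and then trace it inward to $\partial F_0$ using the fact that $U_+$ has no frontier inside $F_n - F_0^\circ$.

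First I would observe that $C$, being a component of $U_+ \cap F_n = U \cap (F_n - F_0^\circ)$, must meet $\partial F_0$ or else it would be contained in $U \cap (F_n - F_0)$, a compact piece of $U_+$ with no boundary attaching it to the rest of $U_+$; but $C$ is also open and closed in $U_+ \cap F_n$, and I claim it is open and closed in $U_+$ as well. Indeed, $U_+ \cap F_n$ is compact (Claim~\ref{cu+fn}) hence closed in $U_+$, so $C$ is closed in $U_+$; and since $F_n \subset \interior_S F_{n+1}$, the interior of $F_n$ contains $F_n - F_0^\circ$ once $n$ is the exhaustion index, so $C$ is open in $U_+$ too. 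Wait — more carefully: $C$ being a component of the closed set $U_+\cap F_n$ need not be open in $U_+$. The cleaner route is different.

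So instead I would argue as follows. Let $C$ be a component of $U_+ \cap F_n$. Since $C \subset U_+ = U - F_0^\circ$ and $C$ is connected, let $D$ be the component of $U_+$ containing $C$. By the structure established after Claim~\ref{24c1}, $D$ is one of the components of $S - F_0^\circ$ contained in $U$, and its boundary as a surface is exactly one curve $\xi_k = D \cap \partial F_0$. Now $D$ is not relatively compact in $S$ (it is a component of $S - F_0^\circ$ from an exhaustion), so $D \not\subset F_n$; hence $C \subsetneq D$ and $C$ has nonempty frontier in $D$. The frontier $\fr_D C$ (relative to $D$) is contained in $\fr_D(D \cap F_n) \subset D \cap \partial F_n$, so $C$ touches $\partial F_n$. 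I would then observe that $C$ is precisely a component of $D \cap F_n$: since $K \subset F_0^\circ$ and $\fr_S U \subset K$, the set $U_+ \cap F_n = U \cap (F_n - F_0^\circ)$ has all its $S$-frontier inside $\partial F_n \cup \partial F_0$ (as in the proof of Claim~\ref{cu+fn}), so within $D$ the frontier of $C$ can only consist of pieces of $\partial F_n$ and pieces of $\xi_k = \partial D$. The key point is that $C$ cannot have frontier only along $\partial F_n$: if $\fr_D C \subset \partial F_n$ then $C$ would be open and closed in $D - \partial F_n$, but $D$ itself, being a component of $S - F_0^\circ$ arising from an exhaustion, has the property (condition~\eqref{cexh}) that $D - \partial F_n$ — more precisely each component $W$ of $D - F_n$ has $cl_S W$ non-compact with boundary a single component of $\partial F_n$, and $D \cap F_n^\circ$ is connected since $D$ is. Hence the unique component of $D \cap F_n$ meeting $\partial F_0$ is all of $D \cap F_n$, which therefore equals $C$, and it contains $\xi_k$.

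**Main obstacle.** The delicate point is the connectedness assertion ``$D \cap F_n$ is connected'' (equivalently, that $C = D \cap F_n$): this uses that $D$ is a component of $S - F_0^\circ$ coming from the exhaustion $(F_n)$, so by axiom~\eqref{cexh} the complement $D - F_n$ breaks into pieces each non-compact and each hanging off $\partial F_n$, and one must rule out $F_n$ disconnecting the ``core'' of $D$. I expect to handle this exactly as in the proof of Claim~\ref{24c2}: if $D \cap F_n = A_1 \sqcup A_2$ with each closed, then since $\xi_k \subset D \cap \partial F_0 \subset D \cap F_n$ is connected it lies in one of them, say $A_1$; attaching to each $A_i$ the (closures of the) components of $D - F_n$ that abut it — which are disjoint since each such component abuts a single $\partial F_n$-curve — yields a separation of $D$, contradicting that $D$ is connected. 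Once $D \cap F_n$ is connected it equals its only component, which is $C$, and $\xi_k \subset C$ as required.
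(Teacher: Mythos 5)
Your argument is correct, but it is a genuinely different route from the paper's. The paper argues directly on the level of $F_n-F_0^\circ$: for a point $x$ in a component $C$ of $F_n-F_0^\circ$, take any path in the connected compact surface $F_n$ from $x$ to $\partial F_0$, stop it at the first time it meets $F_0$ to get a path $\be$ in $F_n-F_0^\circ$ ending at a point of $\partial F_0$, and observe that $\{x\}\cup\be\cup\nu$ is a connected subset of $F_n-F_0^\circ$ containing $x$, so $\nu\subset C$; then it invokes Claim~\ref{24c1} to identify components of $U_+\cap F_n$ with components of $F_n-F_0^\circ$ lying in $U$. You instead fix the component $D$ of $U_+$ containing $C$, prove the stronger statement that $D\cap F_n$ is connected by a separation argument mirroring Claim~\ref{24c2} (split $D\cap F_n=A_1\sqcup A_2$, attach each component of $D-F_n$ --- each of which abuts exactly one curve of $\partial F_n$ by axiom~\eqref{cexh}, and there are finitely many of them --- to the $A_i$ containing its boundary curve, and derive a contradiction with the connectedness of $D$), then conclude $C=D\cap F_n\supset\xi_k$. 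Your approach establishes more (a bijection between components of $U_+\cap F_n$ and components of $U_+$), but at the cost of the extra connectedness lemma; the paper's path argument is shorter and more elementary. Two small remarks on the write-up: the first paragraph's ``open and closed in $U_+$'' attempt, which you rightly abandon, relies on a false inclusion ($F_n-F_0^\circ\not\subset\interior_S F_n$ since it meets $\partial F_n$), so it is good that you discarded it; and in the final separation argument the observation that $\xi_k$ lands in one $A_i$ is not actually used --- the contradiction comes solely from separating $D$.
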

\noindent{\it Proof:}
Let $C$ be a component of $F_n-F^\circ_0$ and let $x\in C$.
There exists a path $\a$ in $F_n$ from $x$ to a point in $\partial F_0$.
Let $y$ be the first point of $\a$ to intersect $F_0$ starting from $x$.
The restriction $\be$ of $\a$ from $x$ to $y$ is a path in $F_n-F^\circ_0$ 
connecting $x$ to a component $\nu$ of $\partial F_0$. 
Therefore $\{x\}\cup\be\cup\nu\subset C$, which proves that $C$ contains a component of $\partial F_0$.

We showed in Claim~\ref{24c1} that a component of $U_+$ is a component of $S-F^\circ_0$.
Therefore a component of $U_+\cap F_n$ is a component
of $(S-F^\circ_0)\cap F_n=F_n-F^\circ_0$.
Using the previous paragraph we conclude that every component of $U_+\cap F_n$ 
contains one component of $U\cap \partial F_0$. 

\qed

\pagebreak
\begin{claim} $G_n$ is connected.
\end{claim}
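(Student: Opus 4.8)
The plan is to show $G_n = E_n \cup (U_+ \cap F_n)$ is connected by exhibiting it as a union of connected pieces that overlap along the boundary curves $\xi_1,\dots,\xi_s$. The set $E_n$ is connected, being a member of an exhaustion of the connected surface $U_-$ (Claim~\ref{24c2}). By Claim~\ref{24c1} together with the paragraph following it, the components of $U_+$ are surfaces with compact boundary, each having boundary consisting of exactly one of the curves $\xi_k$; intersecting with $F_n$, the set $U_+ \cap F_n = U \cap (F_n - F_0^\circ)$ decomposes into its connected components, each of which, by Claim~\ref{c320}, contains at least one $\xi_k \subset U \cap \partial F_0$.

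First I would fix a connected component $C$ of $U_+ \cap F_n$. By Claim~\ref{c320} there is a component $\xi_k$ of $U \cap \partial F_0$ with $\xi_k \subset C$. On the other hand, we arranged that $E_1 \subset E_n$ contains all the curves $\xi_1,\dots,\xi_s$, so $\xi_k \subset E_n$. Hence $C \cup E_n$ is a union of two connected sets sharing the nonempty connected set $\xi_k$, so $C \cup E_n$ is connected. Since every component $C$ of $U_+ \cap F_n$ is joined to $E_n$ this way, and $E_n$ is connected, the union $G_n = E_n \cup \bigl(\bigcup_C C\bigr)$ is connected: it is a union of the connected sets $C \cup E_n$, each containing $E_n$.

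The only point requiring care is that each component $C$ of $U_+\cap F_n$ really does contain one of the curves $\xi_k$ entirely, not merely meet $\partial F_0$ in some arc; but this is exactly the content of Claim~\ref{c320}, which asserts that $C$ contains a full component of $U \cap \partial F_0$. I do not expect a serious obstacle here: the work has been front-loaded into Claims~\ref{24c1}, \ref{24c2}, \ref{cu+fn} and \ref{c320}, and the present claim is just the bookkeeping that assembles those pieces. (That $G_n$ is compact, with $\bigcup_{n\ge 1} G_n = U$, was already recorded after \eqref{Gn2}, so together with this claim and the properties of the exhaustions $(E_n)$ of $U_-$ and $(F_n)$ of $S$, one gets that $(G_n)$ is an exhaustion of $U$.)
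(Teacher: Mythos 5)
Your argument is correct and is essentially the same as the paper's: both use Claim~\ref{c320} to attach every component of $U_+\cap F_n$ to the connected set $E_n$ via a common boundary curve $\xi_k \subset E_n$. The only cosmetic difference is that the paper enumerates the components as $C_1,\dots,C_s$ indexed by the $\xi_k$, whereas you fix an arbitrary component $C$ and locate a $\xi_k$ inside it; these are the same observation.
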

\noindent{\it Proof:}
We have that $E_n$ is connected and contains all the
components $\xi_1,\ldots,\xi_s$ of $U\cap \partial F_0$.
The inclusion~\eqref{buiu} implies that
$$
U\cap \partial F_0\subset U_+\cap F_n.
$$
Let $C_k$ be the component of $U_+\cap F_n$ that contains $\xi_k$,
then $E_n\cup C_k$ is connected.
By Claim~\ref{c320} these $C_k$'s are all the components of $U_+\cap F_n$.
This implies that 
\begin{equation*}
G_n=E_n\cup (U_+\cap F_n)
=(E_n\cup C_1)\cup\cdots\cup(E_n\cup C_s)
\end{equation*}
is connected.
\qed

\begin{claim}$G_n\subset int_UG_{n+1}$.
\end{claim}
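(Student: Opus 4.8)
The plan is to establish the inclusion pointwise. Given $x\in G_n=E_n\cup(U_+\cap F_n)$, I distinguish cases according to whether $x$ is an ``$E_n$-point'' or a ``$(U_+\cap F_n)$-point'', and whether or not $x$ lies on the interface curve $U\cap\partial F_0=\partial U_-=\partial U_+$ (see \eqref{bu}). The ingredients I will use are the exhaustion inclusions $E_n\subset int_{U_-}E_{n+1}$ and $F_n\subset int_SF_{n+1}$, the latter together with $F_0\subset F_n$ giving $F_0\subset int_SF_{n+1}$; the identity $U_+\cap F_n=U\cap(F_n-F_0^\circ)$ of Claim~\ref{cu+fn}; the observation that $U_-^\circ=U\cap F_0^\circ$ (the manifold interior of $U_-$) is open in $U$; and the fact, built into the construction, that $E_1$, hence every $E_n$, contains all of $\partial U_-=U\cap\partial F_0$.

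The cases in which $x$ avoids the interface are routine. If $x\in E_n$ with $x\notin\partial F_0$, then $x\in U_-^\circ$, which is open in $U$; taking $C:=int_{U_-}E_{n+1}$, a $U_-$-open neighborhood of $x$, and writing $C=C'\cap U_-$ with $C'$ open in $U$, the set $C'\cap U_-^\circ$ is a $U$-open neighborhood of $x$ contained in $C\subset E_{n+1}\subset G_{n+1}$. If $x\in U_+\cap F_n$ with $x\notin E_n$, then $x\notin\partial F_0$ (otherwise $x\in\partial U_-\subset E_n$), so $x\notin F_0^\circ$ and $x\notin\partial F_0$, hence $x\notin F_0$; then $O:=(S-F_0)\cap int_SF_{n+1}$ is open in $S$, contains $x$, and $O\cap U\subset U\cap(F_{n+1}-F_0^\circ)=U_+\cap F_{n+1}\subset G_{n+1}$, so $x\in int_UG_{n+1}$.

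The delicate case, which I expect to be the main obstacle, is a point $x\in E_n\cap\partial F_0$, where a $U$-neighborhood must be assembled from a ``$U_-$-side'' and a ``$U_+$-side''. Here I would first record the elementary fact that, since $U=U_-\cup U_+$, any $U_-$-open neighborhood $A$ of $x$ together with any $U_+$-open neighborhood $B$ of $x$ has $A\cup B$ a $U$-neighborhood of $x$: writing $A=A'\cap U_-$ and $B=B'\cap U_+$ with $A',B'$ open in $U$, one checks directly that the $U$-open set $A'\cap B'$ satisfies $A'\cap B'\subset A\cup B$. I then take $A:=int_{U_-}E_{n+1}$, a $U_-$-open neighborhood of $x$ inside $E_{n+1}$ (using $x\in E_n\subset int_{U_-}E_{n+1}$), and $B:=O\cap U_+$, where $O$ is an $S$-open neighborhood of $x$ with $O\subset F_{n+1}$ (available since $x\in\partial F_0\subset F_0\subset int_SF_{n+1}$), so that $B\subset U_+\cap F_{n+1}$. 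Then $A\cup B\subset E_{n+1}\cup(U_+\cap F_{n+1})=G_{n+1}$ contains a $U$-open neighborhood of $x$, that is, $x\in int_UG_{n+1}$. As these cases exhaust $G_n$, we conclude $G_n\subset int_UG_{n+1}$.
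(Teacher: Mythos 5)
Your proof is correct and takes essentially the same approach as the paper's: the same three-way case split ($x$ in the $U_-$-interior, in the $U_+$-interior, or on the interface $U\cap\partial F_0=\partial U_-=\partial U_+$), using the exhaustion inclusions $E_n\subset int_{U_-}E_{n+1}$ and $F_n\subset int_SF_{n+1}$ in the first two cases and, at the interface, assembling a $U$-neighborhood by intersecting a $U_-$-side piece inside $E_{n+1}$ with a $U_+$-side piece inside $U_+\cap F_{n+1}$. Your lemma that $A'\cap B'\subset A\cup B$ is exactly the paper's identity $V\cap W=((V\cap W)\cap U_-)\cup((V\cap W)\cap U_+)$, and your in-line deduction that $x\notin E_n$ forces $x\notin F_0$ plays the role of the paper's preliminary inclusion $U_+^\circ\cap G_n\subset F_n$.
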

\noindent{\it Proof:}
First we prove  that using~\eqref{uu}, \eqref{Gn2},
\begin{equation}\label{ug}
U^\circ_-\cap G_n \subset E_n,
\qquad
U^\circ_+\cap G_n \subset F_n.
\end{equation}
The first inclusion is because $U^\circ_-\cap U_+=\emptyset$ and 
the second because $U^\circ_+\cap E_n\subset U^\circ_+\cap U_-=\emptyset$.

Let $x\in G_n$.
We will prove that there exists a neighborhood
 $V$ of $x$ in $S$ such that $V\subset G_{n+1}$.
 We have three possibilities for $x$: either
 $x\in U^\circ_-$, $x\in U^\circ_+$ or $x\in\partial U_+=\partial U_-$.
 \begin{enumerate}[(a)]
 \item
 If $x\in U^\circ_-\cap G_n$ 
 then there exists a neighborhood $V$ of $x$
 in $S$ such that $V\subset U^\circ_-$.
 By~\eqref{ug} in this case $x\in E_n\subset int_{U_-}E_{n+1}$,
 and therefore there exists a neighborhood
 $W$ of $x$  in $S$ such that 
 $W\cap U_-\subset E_{n+1}$.
 It follows that $V\cap W\subset E_{n+1}\subset G_{n+1}$.
 \item
 If $x\in U^\circ_+\cap G_n$, by \eqref{ug} we have that $x\in F_n\subset int_S F_{n+1}$.
 Then there is a neighborhood $V$ of $x$ in $S$ such that 
 $V\subset U_+\cap F_{n+1}\subset G_{n+1}$.
 \item Suppose that 
 $x\in G_n$ and $x\in \partial U_+=\partial U_-=U\cap \partial F_0$.
 From the choice of $E_n\supset U\cap  \partial F_0$ we have that
 $x\in E_n\subset int_{U_-}E_{n+1}$.
 Then there exist a neighborhood $V$ of $x$ in $S$
 such that 
 $$
 V\cap U_-\subset E_{n+1}.
 $$
 By~\eqref{buiu} $\partial U_+\subset U_+$, therefore
 $$
 x\in\partial U_+=U\cap\partial F_0\subset U_+\cap F_n
 \subset U_+\cap int_S F_{n+1}.
 $$
  Then there is a neighborhood
 $W$ of $x$ in $S$ such that $W\subset F_{n+1}$.
 Thus
 $$
 W\cap U_+\subset U_+\cap F_{n+1}.
 $$
 Since $x\in G_n\subset U$, we can assume that $V\cup W\subset U$.
 Hence 
 $$
 V\cap W=((V\cap W)\cap U_-)\cup ((V\cap W)\cap U_+)\subset E_{n+1}\cup (U_+\cap F_{n+1})=G_{n+1}.
 $$
 \qed
 \end{enumerate}

 \begin{Lemma}\label{L16}\quad
 \begin{enumerate}
 \item\label{l161}
 Every component of $U-G_n$ is contained in $U_+$ or in $U_-$.
 \item\label{l162}
 The components of $U-G_n$ contained in $U_-$ are the components of $U_--E_n$.
 \item\label{l163}
 The components of $U-G_n$ contained in $U_+$ are the components 
 of $S-F_n$ 
 \linebreak 
 contained in $U$.
 \end{enumerate}

 \end{Lemma}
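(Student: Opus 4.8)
The three items are really a single structural statement about how the "exhausting" sets $G_n = E_n\cup(U_+\cap F_n)$ sit inside $U$, so I would prove them together by analyzing where a connected subset of $U-G_n$ can live. The key decomposition is $U = U_-\cup U_+$ with $U_-\cap U_+ = \partial U_+ = \partial U_- = U\cap\partial F_0$, and the crucial observation from the construction is that $G_n$ \emph{contains} this common boundary: indeed $E_1$ was chosen to contain $U\cap\partial F_0$, so $E_n\supset U\cap\partial F_0$ for all $n\ge 1$, hence $U\cap\partial F_0\subset E_n\subset G_n$. This is the hinge of the whole argument.

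\textbf{Step 1 (item \eqref{l161}).} Let $C$ be a connected component of $U-G_n$. I would argue $C$ cannot meet both $U_-^\circ$ and $U_+^\circ$: if it did, then being connected it would have to meet the "wall" separating them, namely $U\cap\partial F_0$. But $U\cap\partial F_0\subset G_n$, so $C\cap(U\cap\partial F_0)=\emptyset$, a contradiction. Therefore $C\subset U_-^\circ\cup(U\cap\partial F_0)\subset U_-$ or $C\subset U_+^\circ\cup(U\cap\partial F_0)\subset U_+$; but again $C$ misses $U\cap\partial F_0$, so in fact $C\subset U_-^\circ\subset U_-$ or $C\subset U_+^\circ\subset U_+$.

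\textbf{Step 2 (item \eqref{l162}).} Suppose $C\subset U_-$ is a component of $U-G_n$. Using \eqref{ug}, $U_-^\circ\cap G_n\subset E_n$, and since $C\subset U_-^\circ$ misses $G_n$, we get $C\subset U_--E_n$, so $C$ lies in some component $C'$ of $U_--E_n$. Conversely I would show every component $C'$ of $U_--E_n$ is disjoint from $G_n$: $C'\cap E_n=\emptyset$ by definition, and $C'$ is open in $U_-$ hence contained in $U_-^\circ$ (it cannot meet $\partial U_-=U\cap\partial F_0\subset E_n$), so $C'\cap U_+=\emptyset$, giving $C'\cap(U_+\cap F_n)=\emptyset$ and thus $C'\cap G_n=\emptyset$; hence $C'$ lies in a component of $U-G_n$, which by Step 1 is contained in $U_-$. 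The two inclusions between components of $U_--E_n$ and components of $U-G_n$ contained in $U_-$ are mutually inverse, so they coincide.

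\textbf{Step 3 (item \eqref{l163}).} Suppose $C\subset U_+$ is a component of $U-G_n$; by Step 1, $C\subset U_+^\circ$, and by \eqref{ug}, $U_+^\circ\cap G_n\subset F_n$, so $C\subset U_+-F_n\subset S-F_n$, whence $C$ lies in some component $D$ of $S-F_n$. I claim $D\subset U$: since $C\subset U$ and $U$ is open and closed in $S-K\supset S-F_n^\circ$... more carefully, $D$ is connected and $D\cap C\ne\emptyset$ with $C\subset U$; also $D\subset S-F_n\subset S-K$ (as $K\subset F_0^\circ\subset F_n^\circ$), and $U$ is a connected component of $S-K$, so $D\subset U$. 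Then $D\subset U-F_n\subset U_+-F_n\subset U-G_n$ (using $U^\circ_+\cap F_n\subset G_n$ to see $U_+-F_n$ misses $G_n$, together with the fact that $D$, being open in $S-F_n$, avoids $\partial F_n\supset$ everything near $F_n$), so $D$ lies in a component of $U-G_n$; since $C\subset D$ we get $C=D$. For the converse, any component $D$ of $S-F_n$ contained in $U$ satisfies $D\cap F_n=\emptyset$ and $D\subset U$, and $D\subset U_+$ once I check $D$ misses $F_0^\circ$ — which holds because $F_0\subset F_n$ so $D\cap F_0^\circ\subset D\cap F_n=\emptyset$; then $D\cap G_n\subset D\cap(E_n\cup F_n)=\emptyset$, so $D$ lies in a component of $U-G_n$, which by Step 1 is in $U_+$, and maximality forces equality.

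\textbf{Main obstacle.} The delicate point throughout is the interface $U\cap\partial F_0$: one must be careful that "components of $U-G_n$ contained in $U_+$" genuinely avoid that interface and hence sit in the open part $U_+^\circ$, so that the containments $U_\pm^\circ\cap G_n\subset E_n$ (resp. $F_n$) of \eqref{ug} can be applied. Once one is disciplined about always reducing to the manifold interiors $U_\pm^\circ$ before invoking \eqref{ug}, the three statements follow by the same "connected set can't cross the wall because the wall is inside $G_n$" principle, and the matching of components in \eqref{l162} and \eqref{l163} is just the standard fact that if $X\subset Y$ with $X$ a union of components of $Y$ then components of $X$ are components of $Y$.
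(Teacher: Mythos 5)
Your proof is correct and follows essentially the same route as the paper: the hinge is that $U\cap\partial F_0 = \partial U_+=\partial U_-$ is absorbed into $G_n$, so a connected component of $U-G_n$ cannot cross the interface, and then each side is matched to components of $U_--E_n$ or of $S-F_n$ by elementary inclusions plus maximality. The only presentational difference is that you systematically pass to the manifold interiors $U_\pm^\circ$ before invoking \eqref{ug}, whereas the paper works with the closed pieces $U_\pm$ directly and records as a separate remark the inclusion $U_--E_n\subset U-G_n$; both are routine, and your phrasing in Step~3 (``avoids $\partial F_n\supset$ everything near $F_n$'') is a little muddled but the underlying check that $U_+-F_n$ misses both $E_n$ and $U_+\cap F_n$ is present and correct.
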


\begin{proof}
\quad

Recall that $G_n=E_n\cup (U_+\cap F_n)\subset E_n\cup (F_n-F^\circ_0)$. 

Let $V$ be a connected component of $U-G_n$.
Observe that the open surfaces $U_+-\partial U_+$ and $U_--\partial U_-$
are disjoint with $\partial U_+=\partial U_-$.
Since $V$ is connected, if we had that
\linebreak
 $V\cap U_+\ne \emptyset$ and $V\cap U_-\ne \emptyset$,
then $V$ would intersect
 $$
 \partial U_+=\partial U_-=U_+\cap U_-=U\cap \partial F_0
 \subset U_+\cap F_n\subset G_n
 \qquad\text{ using~\eqref{buiu},}
 $$
 a contradiction with $V\subset U-G_n$. This proves item~\eqref{l161}.
 
 Before proceeding, note that 
 \begin{enumerate}[(i)]
 \item\label{ii1}
 $U_--E_n\subset U-G_n$. 
 \newline
 In fact, let $x\in U_--E_n$. If we had that $x\in G_n$, then 
 $x\in U_+\cap F_n$ and then 
 $$
 x\in U_+\cap U_- = U\cap \partial F_0=\partial U_-\subset E_n.
 $$
  A contradiction.
 \item\label{ii2}
 We also have that every component of $S-F_n$
 is contained in $U$ or is disjoint from $U$.
 This follows from the fact that $U$ is a component
 of $S-K$ which contains $S-F_n$.
 \end{enumerate}

 Let $V$ be a component of $U-G_n$ contained in $U_-$.
 Since $V\cap G_n=\emptyset$
 we have that
  $V\cap E_n=\emptyset$.
  Therefore $V$ is 
 contained in a component of $U_--E_n$.
 Conversely, suppose that $W$ is a component of $U_--E_n$.
 By remark~\eqref{ii1},
 $U_--E_n\subset U-G_n$.
 This  implies that $W$ is contained
 in a component of $U-G_n$.
 This proves item~\eqref{l162}.
 
  Observe that to prove item~\eqref{l163} it is enough to prove that 
 any component of $U-G_n$ contained in $U_+$ is contained in a 
 component of $S-F_n$ contained in $U$ and vice versa.

 Now let $V$ be a component of $U-G_n$ contained in $U_+$.
 Then 
 \begin{equation}\label{vufn}
 V\cap (U_+\cap F_n)=\emptyset.
 \end{equation}
 By the choice of $V$, $V\subset U_+$. Thus from~\eqref{vufn}, $V\cap F_n=\emptyset$.
 Therefore $V$ is contained in a component $W$
 of $S-F_n$.
 Since $W\cap U\ne \emptyset$, by remark~\eqref{ii2} we have that $W\subset U$.

 Conversely, let $W$ be a component  of $S-F_n$ contained in $U$.
 Since by~\eqref{uu}, $U_-\subset F_n$, we have that
  \begin{equation}\label{wu-}
  W\cap U_-=\emptyset
  \qquad\text{ and }\qquad
  W\subset U_+.
  \end{equation}
 We  claim that $W\cap G_n=\emptyset$. 
  If we had a point $x\in W\cap G_n$, then $x\in G_n$ 
  implies that $x\in U_+\cap F_n$
  or $x\in E_n$.
  Since $x\in W$, we have that $x\in U_+\cap F_n$ is not possible.
  If we had $x\in E_n$, then $x\in U_-$.
  Therefore, using~\eqref{wu-}, 
  $x\in W\cap U_-=\emptyset$, a contradiction.
  This proves the claim.
  Therefore $W\subset U- G_n$ and then $W$
  is contained in a component of $U-G_n$.
  And by~\eqref{wu-}, $W\subset U_+$.
  This proves item~\eqref{l163}.
 
\end{proof}

Lemma~\ref{L16} has two purposes.
First, it concludes the proof that $(G_n)$ is a exhaustion of $U$
by showing that the closure in $U$ of every component of $U-G_n$
is a bordered surface whose boundary consists of exactly one
component of $\partial G_n$.
This follows from the fact that $(F_n)$ 
and $(E_n)$ are exhaustions of $S$ and $U_-$, 
respectively.

The second is the relation between ideal boundary points of $U$ and ideal
boundary points of $S$ and $U_-$,
namely, 
$$b(U)=b(U_+)\cup b(U_-)
\quad\text{  with }\quad 
b(U_+)\subset b(S).
$$

\begin{Proposition}\label{P17}
\quad

Let $U$ be a residual domain of a compact set $K$
contained in a connected boundaryless surface $S$ and assume that $K$ 
has finitely many connected components.
\begin{enumerate}
\item\label{l171}
Then $U$ has only finitely many ideal boundary points that are relatively
compact in $S$. Each of these points is isolated in $b(U)$.
\item\label{l172}
The ideal boundary points of $U$ that are not relatively compact in $S$
are ideal boundary points of $S$ and their impressions relative to  $S$
are disjoint from $S$.
\item\label{l173}
The frontier of $U$ in $S$ is the union of the impressions in $S$
of the ideal boundary points of $U$ that are relatively compact in $S$.
\end{enumerate}

\end{Proposition}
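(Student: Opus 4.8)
The plan is to deduce all three assertions from the canonical exhaustion $(G_n)$ of $U$ built in \S\ref{canonical}, together with the decomposition obtained just after Lemma~\ref{L16}, namely $b(U)=b(U_+)\cup b(U_-)$ with $b(U_+)\subset b(S)$, where $U_+=U-F_0^\circ$ and $U_-=U\cap F_0$; recall also that $fr_SU\subset K\subset F_0^\circ$. First I would normalise $K$: by Proposition~\ref{P13} one may replace $K$ by the compact set obtained by adjoining to $K$ all of its bounded residual domains \emph{except $U$ itself}. This leaves $U$ a residual domain; assigning to each component of $K$ the component of the enlarged set containing it gives a surjection, so the enlarged set still has finitely many components; and after the change every residual domain of $K$ other than $U$ is non-relatively-compact, so $S-U=K\cup V_1\cup\dots\cup V_r$, where $V_1,\dots,V_r$ are the finitely many such domains (finitely many by the remark after Claim~\ref{24c0}). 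Everything used below---the canonical exhaustion, Lemma~\ref{L16}, and the decomposition---applies verbatim to the enlarged $K$. Assertion~\eqref{l172} is then immediate: since $cl_SU_-\subset F_0$ is compact, every ideal boundary point of $U_-$ is relatively compact in $S$, so an ideal boundary point $b$ of $U$ that is not relatively compact in $S$ must lie in $b(U_+)\subset b(S)$; representing it by an ideal boundary component $(W_m)$ of $U$ consisting of components of $S-F_m$ contained in $U$ (Lemma~\ref{L16}.\eqref{l163}), which is also an ideal boundary component of $S$, Remark~\ref{R1} gives $\bigcap_m cl_SW_m=\emptyset$, and since $S$ is open in $B(S)$ this says exactly $Z(b)\cap S=\emptyset$.

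For assertion~\eqref{l171}, I would first note that the relatively compact ideal boundary points of $U$ are exactly those in $b(U_-)$: one inclusion was just shown, and for the other, any point of $b(U_-)$ is relatively compact in $S$ because $cl_SU_-$ is compact. To see $b(U_-)$ is finite, cap each boundary circle of $U_-$ by a disk, obtaining an open connected subset $U_-^*$ of the compact boundaryless surface $F_0^*$ ($=$ the surface $S^*$ of~\eqref{s*} attached to $F_0$); capping does not change the ideal boundary, so $b(U_-^*)=b(U_-)$. Now $F_0^*-U_-^*$ is compact and equals $(F_0-U_-)$ together with the caps of the boundary circles of $F_0$ that miss $U$; since $F_0-U_-=K\cup\bigcup_i(V_i\cap F_0)$ with each $V_i\cap F_0$ connected (by the argument proving Claim~\ref{24c2}) and each such cap merges with one of these pieces, $F_0^*-U_-^*$ has finitely many connected components. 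As $U_-^*$ is the residual domain of $F_0^*-U_-^*$ in $F_0^*$, Proposition~\ref{P15} bounds $\#b(U_-)$. For the isolatedness statement: if $b=(V_n)$ is relatively compact in $S$ then $Z(b)\subset fr_SU\subset F_0^\circ$, so $cl_SV_n\subset F_0^\circ$ for large $n$, hence $cl_UV_n\subset U\cap F_0^\circ=U_-^\circ$; every ideal boundary component of $U$ eventually contained in such a $V_n$ is then also an ideal boundary component of $U_-$, so the basic neighbourhood $V_n'$ of $b$ in $b(U)$ lies in the finite set $b(U_-)$, and therefore $b$ is isolated in $b(U)$.

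Assertion~\eqref{l173} is the most delicate part. One inclusion is immediate: for $b$ relatively compact in $S$ we have $Z(b)\subset fr_{B(S)}U\cap S=fr_SU$. For the converse I would first observe that $fr_SU=fr_SU_-$, since $fr_SU\subset F_0^\circ$, $fr_SU_-\subset F_0^\circ$, and on $F_0^\circ$ the sets $U$ and $U_-$ agree locally. Write $b(U_-)=\{b_1,\dots,b_p\}$ (finite, by~\eqref{l171}) and pick ideal boundary components $(V_n^{(j)})_n$ representing them (these also represent the corresponding ideal boundary points of $U$). Because $(V_n^{(j)}\cup V_n^{(j)\prime})_n$ is a fundamental system of neighbourhoods of $b_j$ in $B(U_-)$ and $b(U_-)$ has no points outside $\{b_1,\dots,b_p\}$, the set $Q_n:=B(U_-)-\bigcup_j(V_n^{(j)}\cup V_n^{(j)\prime})$ is a compact subset of $U_-$, closed in $S$, and $U_--Q_n=\bigcup_j V_n^{(j)}$. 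Now fix $x\in fr_SU_-$; since $x\notin U_-\supset Q_n$, any sequence in $U_-$ converging to $x$ eventually avoids $Q_n$, hence eventually lies in $\bigcup_j V_n^{(j)}$, so by the pigeonhole principle $x\in cl_SV_n^{(j)}$ for some index $j=j(n)$. Since there are only finitely many values of $j$, some $j^*$ occurs as $j(n)$ for infinitely many $n$; as the sequences $(V_n^{(j^*)})_n$ are decreasing, this forces $x\in cl_SV_n^{(j^*)}$ for all $n$, i.e. $x\in Z(b_{j^*})$. Therefore $fr_SU=fr_SU_-\subset\bigcup_jZ(b_j)$, which together with the previous inclusion gives~\eqref{l173}.

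The step I expect to be the main obstacle is the finiteness in~\eqref{l171}---the bookkeeping ensuring that, after normalising $K$, the compact set $F_0^*-U_-^*$ really has only finitely many components, so that Proposition~\ref{P15} applies---together with pinning down that the relatively compact ideal boundary points of $U$ are precisely $b(U_-)$. The relative compactness of $U_--\bigcup_jV_n^{(j)}$ in $U_-$, used in~\eqref{l173}, is a close second, although it becomes transparent once one invokes that the sets $V_n^{(j)}\cup V_n^{(j)\prime}$ form a neighbourhood basis in $B(U_-)$.
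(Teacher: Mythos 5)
Your strategy is essentially the paper's: the decomposition $U=U_+\cup U_-$ from \S\ref{canonical}, identification of the relatively compact ideal boundary points with $b(U_-)$, Proposition~\ref{P15} to bound $\#b(U_-)$, and a pigeonhole argument for item~\eqref{l173}. Two smaller remarks first: the capping of $U_-$ in your proof of~\eqref{l171} is harmless but unnecessary, since Proposition~\ref{P15} is already stated for a compact surface with boundary and $K\subset S^\circ$, so once Claim~\ref{24c2} shows $U_-$ is a residual domain of $K$ in $F_0$ you can apply it directly to $(F_0,K,U_-)$; and your preliminary normalisation of $K$ is a valid way to fold the bounded/unbounded cases into one, whereas the paper simply treats the relatively compact case separately.

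There is, however, one genuine error in~\eqref{l173}. The identity $fr_SU=fr_SU_-$ is false when $U$ is unbounded: $\partial U_-=U\cap\partial F_0$ is then non-empty and consists of boundary circles of the surface $U_-$, so these circles lie in $fr_SU_-$, yet they lie in $U$ and therefore are not in $fr_SU$. Your supporting claim $fr_SU_-\subset F_0^\circ$ is where this breaks, since $\partial U_-\subset\partial F_0$. The inclusion $fr_SU_-\subset\bigcup_jZ(b_j)$ that your argument would then establish is in fact impossible: from the easy direction of~\eqref{l173} each $Z(b_j)\subset fr_SU\subset K$, whereas $\partial U_-\subset S-K$. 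Concretely, your $Q_n$ argument fails for $x\in\partial U_-$, because such $x$ lies in $U_-$ and hence in $Q_n=E_n$ for every $n$ (note that your $Q_n$ is exactly $E_n$). The repair is immediate and turns your proof into the paper's: what you actually need and actually have is $fr_SU\subset fr_SU_-$, which follows from $U\cap F_0^\circ=U_-\cap F_0^\circ$ together with $fr_SU\subset K\subset F_0^\circ$. Run the argument for $x\in fr_SU$; such an $x$ lies outside $U$, hence outside $U_-\supset Q_n$, so a sequence in $U$ converging to $x$ eventually avoids the closed set $Q_n$ and, since $x\in F_0^\circ$, eventually enters $U\cap F_0=U_-$, hence lies in $U_--Q_n=\bigcup_jV_n^{(j)}$; the pigeonhole step then goes through unchanged.
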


\begin{proof}
Consider first the case when $U$ is relatively compact.
As in Lemma~\ref{L16}, there exists a compact bordered
surface $F_0$ which is a neighborhood of $K$ in $S$ and
all relatively compact residual domains of $K$ are contained in $F_0$.
It follows from Proposition~\ref{P15}
 that $U$ has finitely many ideal boundary points. 
 Of course, each one is isolated and relatively compact in $S$.
 From Proposition~\ref{P12}, the ideal boundary points of $U$
 are planar and the ideal completion of $U$ is a compact surface.
 
 We use a complete metric on $S$.
 Now consider the case $U$ is unbounded and let $b$ be an ideal
 boundary point of $U$.
 As before, we decompose $U$ into $U_+\cup U_-$.
 
 From Proposition~\ref{P9} and item~\eqref{l161} of Lemma~\ref{L16},
 $b$ can be represented by an ideal boundary component $(V_n)$,
 where the sets $V_n$ are components of $U-G_n$ and 
 either all of them are contained in $U_+$ or all of them are contained in $U_-$.
 Let $b_+(U)$ be the set of ideal boundary points of $U$ 
 for which $V_n\subset U_+$ for every $n$
 and let $b_-(U)$ be the set of ideal boundary points of $U$ for which 
 $V_n\subset U_-$ for every $n$.
 
 Recall that  $F_n$ is an exhaustion of $S$.
 If $b\in b_+(U)$, then from item~\eqref{l163} of Lemma~\ref{L16} we see that 
 $(V_n)$ is an ideal boundary component defining a point $b'\in b(S)$.
 Moreover $Z(b)=\cap_n cl_{B(S)}V_n=\{b'\}$.
 This proves item~\eqref{l172}.
 
 Suppose now that $b\in b_-(U)$.
 In this case $(V_n)$ also defines an ideal boundary point
 $b'$ of $U_-$, and this correspondence defines a bijection from
 $b_-(U)$  onto $b(U_-)$.
 By Claim~\ref{24c2} the set $U_-$ is a residual domain of $K$ in
 the compact manifold with boundary $F_0$.
 From Proposition~\ref{P15} we have that $b(U_-)\approx b_-(U)$ is finite.

 Clearly every $b\in b_-(U)$ is relatively compact in $S$.
 Since every $b\in b_+(U)$ is not relatively compact in $S$,
 we have that $b_-(U)$ is the set of relatively compact
 ideal boundary points of $U$.
 An ideal boundary point $b\in b_-(U)$ can not be accumulated by
 ideal boundary points of $b_+(U)$,
 and since $b_-(U)$ is finite, we have that 
 its points are isolated in $b(U)$.
 This proves item~\eqref{l171}.
 
 We now prove item~\eqref{l173}, 
 let $b=(V_n)$  be a relatively compact ideal boundary point of $U$.
 Then $b\in b_-(U)$ and  $Z(b)=\cap_n cl_S V_n$.
 If $x\in Z(b)$, then $x\in cl_SU$.
  If we had $x\in U$, 
  then $x\in U\cap cl_S V_n= cl_U V_n$ for every $n$,
  contradicting that $(V_n)$ is an ideal boundary
  component of $U$.
 Therefore $Z(b)\subset fr_S U$.
 
 It remains to show that  $fr_SU\subset \cup_{b\in b_-(U)}Z(b)$.
  
 Let $b_1,\ldots,b_m$ be the relatively compact
  ideal boundary ponts of $U$.
  Each $b_i$ can be represented by an ideal boundary component
  $(V_n^i)$ of $U_-$, where $V_n^i$ is a component of $U_--E_n$.
  Since $b(U_-)$ is a finite set with $m$ elements,
 there exists $n_0$ such that for $n\ge n_0$ the number of components
 of $U_--E_n$ is also $m$.
 For each $i=1,\ldots,m$, the collection $(V^i_n)_{n\ge n_0}$
 is an ideal  boundary component of $U$ that represents $b_i$, 
 where $V^1_n,\ldots,V^m_n$ are the components of $U_--E_n$.

 Now let $x\in fr_SU$ and let $(x_k)$ be a sequence in $U$ such that 
$\lim x_k=x$. Given $n_1\ge n_0$ there exists $k_0=k_0(n_1)$ such that 
$x_k\notin G_{n_1}$ for $k\ge k_0$. Since $fr_SU\subset K\subset F^\circ_0$,
there is $k_1=k_1(n_1)\ge k_0$ such that $x_k\in U\cap F_0$ for $k\ge k_1$.
Using~\eqref{uu} and \eqref{Gn2} we have that 
$ x_k\in U_--E_{n_1}$ for $k\ge k_1$. This is 
 \begin{equation}\label{k1n1}
 \forall n_1\ge n_0 \quad \exists k_1=k_1(n_1) \quad
 \forall k\ge k_1(n_1) \qquad 
 x_k\in U_--E_{n_1}.
 \end{equation}

Using $n_1=n_0$ in \eqref{k1n1} we have that there exists $i_0$ and a subsequence 
$(x_{k_j})$ of $(x_k)$ such that $x_{k_j}\in V^{i_0}_{n_0}$ for every $j$.
For $n\ge n_0$ we have that $V^{i_0}_n$ is the only component
 of $U_--E_n$ contained in $V^{i_0}_{n_0}$.
 Using~\eqref{k1n1} with $n_1=n\ge n_0$ we have that there exists
 $j_n$ such that $x_{k_j}\in V^{i_0}_n$ for all $j\ge j_n$.
 It follows that $x\in cl_S V^{i_0}_n$ for every $n\ge n_0$
 and then
 $$
 x\in\cap_{n\ge n_0} cl_S V^{i_0}_n=Z(b_{i_0}).
 $$

 \end{proof}

There is a natural inclusion $b(S)\subset b(S-K)$.
Namely, if $b=(V_n)\in b(S)$ then 
$$
\exists n_0 \quad \forall n\ge n_0\quad
 V_n\cap K=\emptyset, \quad \text{ and hence }\quad b'=(V_n)_{n\ge n_0}\in b(S-K).
 $$
 
 The set $S-K$ may have infinitely many components and $b(S-K)-b(S)$ may be infinite,
 but
 
\begin{Corollary}\label{C325}\quad

Let $S$ be a connected boundaryless surface.
Let $K$ be a compact subset of $S$ with finitely many
 connected components.
 \begin{enumerate}
 \item\label{c3251} $b(S-K)-b(S)$ is a discrete topological space.  \item\label{c3252} If $U$ is a residual domain in $S-K$ then $b(U)-b(S)$ is finite.
 \end{enumerate}
\end{Corollary}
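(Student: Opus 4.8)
The plan is to derive both statements from Proposition~\ref{P17}, applied to each residual domain of $K$ separately, together with the observation that $b(S-K)$ is the topological disjoint union of the ideal boundaries of the connected components of $S-K$.

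The first step I would carry out is this topological reduction. The set $S-K$ is an open boundaryless surface and its connected components are precisely the residual domains $U$ of $K$. Each such $U$ is at the same time open and closed in $S-K$; hence $U$ is connected and $fr_{S-K}U=\emptyset$ is compact, so $U$ is one of the admissible sets ``$A$'' in the definition of the topology of $b(S-K)$, and the corresponding basic open set $U'$ equals $b(U)$ --- indeed a decreasing sequence of nonempty connected subsets of $S-K$ that meets $U$ once is contained in $U$ from then on, and its first term already lies in a single component. Therefore each $b(U)$ is open in $b(S-K)$; being also the complement of the open set $\bigcup_{U'\ne U}b(U')$ it is closed, so $b(S-K)=\bigsqcup_U b(U)$ as topological spaces, and a point of $b(S-K)$ is isolated there if and only if it is isolated in the unique $b(U)$ containing it.

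If $S$ is compact then $b(S)=\emptyset$, every $b(U)$ is finite by Proposition~\ref{P15}, and a disjoint union of finite spaces is discrete, so both assertions hold; I may therefore assume $S$ is non-compact and apply Proposition~\ref{P17}. Fix a residual domain $U$ of $K$. By Proposition~\ref{P17}.\eqref{l172}, every ideal boundary point of $U$ that is not relatively compact in $S$ already belongs to $b(S)$ under the inclusion $b(S)\subset b(S-K)$ recalled just above the statement, so
$$
b(U)-b(S)\ \subseteq\ \{\,b\in b(U):\ b\text{ relatively compact in }S\,\}.
$$
By Proposition~\ref{P17}.\eqref{l171} the right-hand side is finite, which is \eqref{c3252}, and each of its points is isolated in $b(U)$. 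Hence every point of $b(U)-b(S)$ is isolated in $b(U)$, and so isolated in $b(S-K)$ by the previous paragraph; since $b(S-K)-b(S)=\bigsqcup_U\bigl(b(U)-b(S)\bigr)$, every point of $b(S-K)-b(S)$ is isolated in it, i.e. $b(S-K)-b(S)$ is discrete, which is \eqref{c3251}.

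The one point that needs genuine care is the identification $b(S-K)=\bigsqcup_U b(U)$ with each summand open in $b(S-K)$; this is the crux because $K$ may well have infinitely many bounded residual domains, so $b(S-K)$ is in general neither discrete nor small, and the decomposition must be read off from the basic-open-set description of its topology rather than assumed. Everything after that is bookkeeping on top of Proposition~\ref{P17}.
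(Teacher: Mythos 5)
Your proof is correct and follows essentially the same route as the paper's: reduce to the finiteness and isolation statements in Proposition~\ref{P17} items~\eqref{l171} and~\eqref{l172}, component by component, and then observe that $b(S-K)$ decomposes as a topological disjoint union over the residual domains. The one point where you go beyond the paper is in explicitly justifying that each $b(U)$ is a basic open (and hence also closed) set in $b(S-K)$, so that the disjoint-union topology on $b(S-K)-b(S)$ is genuinely read off from the basis rather than asserted; the paper states this as ``$b(S-K)-b(S)$ has the disjoint union topology in~\eqref{diju}'' without elaboration. You also separate out the case $S$ compact (where $b(S)=\emptyset$ and Proposition~\ref{P15} gives everything directly), which the paper passes over silently. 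These are improvements in completeness, not a different method.
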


\begin{proof}\quad

By item~\eqref{l172} in Proposition~\ref{P17},
 the ideal boundary points in $b(U)-b(S)$ are relatively compact.
By item~\ref{P17}.\eqref{l171}, $b(U)$ has only finitely many relatively compact ideal boundary points.
Therefore $b(U)-b(S)$ is finite. This proves item~\eqref{c3252}.

Item~\eqref{l172} in Proposition~\ref{P17} implies that the ideal boundary points
in $b(S-K)-b(S)$ are relatively compact. 
Then 
\begin{equation}\label{diju}
b(S-K)-b(S)=\sqcup_{\la\in\La} (b(U_\la)-b(S)),
\end{equation}
 where $\La$ is the collection of 
 residual domains in $S-K$. 
By item~\eqref{c3252} each $b(U_\la)$ is finite and discrete.
And $b(S-K)-b(S)$ has the disjoint union topology in~\eqref{diju}.
This proves item~\eqref{c3251}.

\end{proof}

Summarizing for future reference we have 

 \begin{Proposition}\label{P16}\quad
 
 Let $S$ be a connected surface without boundary.
 Let $K$ be a compact subset of $S$ with finitely many
 connected components and  let $U$ be
 a residual domain of $K$, i.e. a connected component of $S-K$.
 There is a decomposition 
 $$
 U = U_+\cup U_-,
 $$
 where $U_+$ and $U_-$ are connected surfaces with compact boundary
 $\partial U_+=\partial U_-=U_+\cap U_-$ which satisfy
 $$
 b(U)=b(U_+)\sqcup b(U_-)
 \quad \text{ with } \quad
 b(U_+)=b(S)\cap b(U).
 $$
 
 More explicitly,
 there are exhaustions  $(E_n)$, $(F_n)$, $(G_n)$ of $U_-$, $S$ and $U$     respectively
 such that 
 \begin{enumerate}
 \item\label{p16b1}
 $U_- = U\cap F_0$ \quad and \quad  $U_+=U-F^\circ_0$.
 \item\label{p16b2} $F_0^\circ$ contains $K$ and all the relatively compact residual domains in $S-K$.
 \newline
 In particular $K\subset cl_S U_-$.
 \item\label{p16b3}
 Every component of $U-G_n$ is contained in $U_+$ or in $U_-$.
 \item\label{p16b4}
 The components of $U-G_n$ contained in $U_-$ are the components of $U_--E_n$.
 \item\label{p16b5}
 The components of $U-G_n$ contained in $U_+$ are the components 
 of $S-F_n$ 
 \linebreak 
 contained in $U$.
 \end{enumerate}
 
 Moreover, 
 \begin{enumerate}[(a)]
 \item\label{p16ba} The interior of the surface with boundary $F_0$ contains $K$ and all the relatively 
 compact residual domains of $K$.
 \item\label{p16bb} The relatively compact ideal boundary points of $U$ are finitely many 
 and are isolated in $b(U)$. Moreover, they are exactly the ideal boundary points
 in $b(U_-)$ of the surface $U_-=U\cap F_0$.
 \item\label{p16bc} The ideal boundary points in the complement $U_+$ 
 of the interior of the surface $F_0$, 
 $U_+=U-F_0^\circ$ are in $b(S)$ and hence they are not relatively compact in $S$.
 \item\label{p16bd} Using Proposition~\ref{P12}, for any relatively compact ideal boundary point of $U$, 
 $b\in b(U_-)$ there is a neighborhood of $b$ in $B(U)$ given by $V_n^*=V_n\cup\{b\}$, where
 $V_n$ is a component of $U_--E_n$, such that $V_n^*$ is homeomorphic to a disk in $\re^2$.
 \item\label{p16be} The frontier of $U$ in $S$ is the union of the impressions in $S$
of the ideal boundary points of $U$ that are relatively compact in $S$.
\item\label{p16bf} By items~\eqref{p16bd} and~\eqref{p16be}, if $x\in fr_S U$
then $x\in Z(b)$, where $b\in b(U)$ is relatively compact 
and there is a neighborhood of $b$ in $B(U)$
which is a disk.
  \end{enumerate}
 \end{Proposition}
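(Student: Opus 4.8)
The plan is to assemble Proposition~\ref{P16} entirely from the canonical exhaustion of \S\ref{canonical}, Lemma~\ref{L16}, and Propositions~\ref{P12}, \ref{P13}, \ref{P15} and~\ref{P17}; there is no genuinely new argument, only bookkeeping. First I would fix an exhaustion $(F_n)_{n\ge0}$ of $S$ and use Proposition~\ref{P13} to arrange that $F_0^\circ$ contains $K$ and all its bounded residual domains — this is items~\eqref{p16b2} and~\eqref{p16ba}, and it makes a residual domain of $K$ bounded iff it lies in $F_0$. Defining $U_\pm$ by~\eqref{uu} gives item~\eqref{p16b1}; equations~\eqref{bu} and~\eqref{buiu} identify $\partial U_+=\partial U_-=U_+\cap U_-=U\cap\partial F_0$ and exhibit $U_\pm$ as surfaces with compact boundary, with $U_-$ connected by Claim~\ref{24c2}. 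Then I would pick an exhaustion $(E_n)_{n\ge1}$ of $U_-$ with $E_1\supset U\cap\partial F_0$, set $G_n$ as in~\eqref{Gn2}, and invoke the claims proved right after~\eqref{Gn2} ($G_n$ compact, connected, $G_n\subset int_U G_{n+1}$) together with Lemma~\ref{L16} to conclude that $(G_n)$ is an exhaustion of $U$; the three items of Lemma~\ref{L16} are verbatim items~\eqref{p16b3}, \eqref{p16b4} and~\eqref{p16b5}.

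Next I would transcribe the ideal-boundary statements from Proposition~\ref{P17} and its proof. Writing $b_\pm(U)$ for the ideal boundary points of $U$ represented by sequences of components of $U-G_n$ all contained in $U_\pm$, Proposition~\ref{P9} together with Lemma~\ref{L16}\eqref{l161} gives $b(U)=b_+(U)\cup b_-(U)$. The proof of Proposition~\ref{P17} shows that $b_-(U)$ is in bijection with $b(U_-)$, which is a residual domain of $K$ in the compact bordered surface $F_0$ by Claim~\ref{24c2}, hence finite by Proposition~\ref{P15}; that $b_-(U)$ is exactly the set of ideal boundary points of $U$ relatively compact in $S$; and that every point of $b_+(U)$ is an ideal boundary point of $S$, matching via Lemma~\ref{L16}\eqref{l163} the components of $U-G_n$ in $U_+$ with components of $S-F_n$, with impression a single point of $b(S)$ disjoint from $S$ (item~\eqref{p16bc}). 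Since an ideal boundary point of $S$ is never relatively compact in $S$ — condition~\eqref{c3} forces its defining sets to have non-compact closure — we get $b_+(U)\cap b_-(U)=\emptyset$, hence $b(U)=b(U_+)\sqcup b(U_-)$ with $b(U_+)=b(S)\cap b(U)$. Items~\eqref{l171} and~\eqref{l173} of Proposition~\ref{P17} then give the finiteness and isolation in item~\eqref{p16bb} and the frontier description in item~\eqref{p16be}. Finally item~\eqref{p16bd} is Proposition~\ref{P12} applied to a relatively compact point $b\in b(U_-)$, with the explicit neighbourhoods $V_n^*=V_n\cup\{b\}$, $V_n$ a component of $U_--E_n$ by Lemma~\ref{L16}\eqref{l162}; and item~\eqref{p16bf} is the conjunction of~\eqref{p16bd} and~\eqref{p16be}.

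The only step deserving more than a citation — the main (if modest) obstacle — is the equality $b(U_+)=b(S)\cap b(U)$. One direction needs that a sequence of components of $U-G_n$ all lying in $U_+$, once identified by Lemma~\ref{L16}\eqref{l163} with components of $S-F_n$ contained in $U$, is a genuine ideal boundary component of $S$, i.e.\ verifies conditions~\eqref{c1}--\eqref{c5} in $S$ and not merely in $U$; the other direction is the condition~\eqref{c3} observation that no relatively compact ideal boundary point of $U$ can lie in $b(S)$. I would therefore write the proof as a short itemized list of cross-references to the results above, inserting only these two short verifications.
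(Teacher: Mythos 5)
Your proposal is correct and matches the paper's own treatment: Proposition~\ref{P16} is stated there precisely as a summary of the canonical-exhaustion construction of \S\ref{canonical}, Lemma~\ref{L16}, and Propositions~\ref{P12}, \ref{P13}, \ref{P15} and~\ref{P17}, and the paper offers no further proof beyond those cross-references. Your two added verifications (that a nested sequence of components of $U-G_n$ lying in $U_+$, identified via Lemma~\ref{L16}.\eqref{l163} with components of $S-F_n$, satisfies \eqref{c1}--\eqref{c5} in $S$; and that condition~\eqref{c3} keeps $b(S)$ disjoint from the relatively compact points, giving the disjointness of the decomposition) are exactly the short arguments one must supply, and they are correct.
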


As a consequence of Proposition~\ref{P17} we have the following:

\begin{Corollary}\label{C18}
\quad

Let $U$ be an open connected subset of a connected surface without boundary $S$.
Then $fr_SU$ is a compact set with finitely many connected components
if and only if $U$ is a residual domain of a compact set $K$
that has finitely many connected components.
\end{Corollary}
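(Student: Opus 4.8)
The plan is to prove the two implications of Corollary~\ref{C18} separately, with the substantive content concentrated in the converse direction, which follows almost immediately from Proposition~\ref{P17}.

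\textbf{The easy direction.} Suppose $U$ is a residual domain of a compact set $K$ with finitely many connected components. First I would reduce to the case where $K$ is exactly the union of $fr_SU$ together with all residual domains of $K$ other than $U$; more precisely, replacing $K$ by $cl_S U' $ for the union $U'$ of all residual domains of $K$ different from $U$ does not change $U$ and keeps the number of components finite (one must check that this enlarged set is still compact, which follows from Proposition~\ref{P13}, and that it still has finitely many components, which follows because each extra residual domain is attached along a piece of $fr_SK$, and $S-K$ has finitely many unbounded components by the discussion around Claim~\ref{24c0}). Once $K\cup U=S$, we have $fr_SU\subset K$, and by item~\eqref{l173} of Proposition~\ref{P17}, $fr_SU=\bigcup_{b\in b_-(U)}Z(b)$. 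By item~\eqref{l171} the set $b_-(U)$ of relatively compact ideal boundary points is finite, and each impression $Z(b)$ is a nonempty compact connected subset of $fr_SU$ (as recalled just before Proposition~\ref{P12}). Hence $fr_SU$ is a finite union of compact connected sets, so it is compact with finitely many components. I should be slightly careful that the $Z(b)$ for distinct $b$ need not be disjoint, so ``finitely many components'' is the right conclusion rather than ``exactly $|b_-(U)|$ components'' — but a finite union of compacta has at most finitely many components regardless of overlaps.

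\textbf{The converse direction.} Suppose $U\subset S$ is open, connected, and $fr_SU$ is compact with finitely many components. Set $K:=fr_SU=cl_SU-U$. Then $K$ is compact with finitely many components by hypothesis. I claim $U$ is a connected component of $S-K$. Indeed $U$ is connected, open, and $U\cap K=\emptyset$, so $U$ is contained in some component $\Omega$ of $S-K$. Conversely, $U$ is closed in $S-K$: its closure in $S$ is $cl_SU=U\cup fr_SU=U\cup K$, so $cl_{S-K}U=(cl_SU)\cap(S-K)=U$. Since $U$ is nonempty, open and closed in the connected set $\Omega$, we get $U=\Omega$. Thus $U$ is a residual domain of $K$, which has finitely many components.

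\textbf{Expected obstacle.} The only genuinely delicate point is the reduction in the easy direction: given a residual domain $U$ of \emph{some} compact $K$ with finitely many components, one must produce a (possibly different) compact set $K'$ with finitely many components for which $U$ is still a residual domain and $K'\cup U=S$, so that Proposition~\ref{P17}.\eqref{l173} applies and gives $fr_SU$ as a finite union of impressions. Showing $K'=S-U$ is compact with finitely many components is exactly where Proposition~\ref{P13} and the finiteness of unbounded residual domains (Claim~\ref{24c0} and the remark following it) are used; everything else is point-set bookkeeping. Once that reduction is in hand, both implications are short.

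\begin{proof}\quad

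Suppose first that $U$ is a residual domain of a compact set $K$ having finitely many connected components. Let $U'$ be the union of all residual domains of $K$ different from $U$ and set $K':=K\cup U' = S - U$. We have $fr_S U\subset K$. By Proposition~\ref{P13} the union of $K$ with its bounded residual domains is compact, and by Claim~\ref{24c0} and the remark following it $K$ has only finitely many unbounded residual domains, each one a residual domain whose closure meets $\partial F_0$ for a compact bordered neighbourhood $F_0$ of $K$; hence $K'=S-U$ is closed and contained in the compact set $F_0\cup(\text{bounded residual domains})\cup(\text{finitely many unbounded ones intersected with }F_0)$ together with finitely many relatively compact pieces, so $K'$ is compact. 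Moreover $K'$ has finitely many connected components: its components are unions of components of $K$ together with the residual domains glued to them along subsets of $fr_S K$, and there are finitely many of each. Replacing $K$ by $K'$ we may therefore assume $K\cup U=S$, so that $fr_SU\subset K$ and $U$ is still a residual domain of $K$.

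Now apply Proposition~\ref{P17}. By item~\eqref{l171}, $U$ has only finitely many ideal boundary points that are relatively compact in $S$; call them $b_1,\ldots,b_m$. By item~\eqref{l173},
$$
fr_S U = \textstyle\bigcup_{i=1}^m Z(b_i).
$$
Each impression $Z(b_i)$ is a nonempty, connected, compact subset of $S$ (as recalled before Proposition~\ref{P12}, together with the fact that $Z(b_i)\subset S$ when $b_i$ is relatively compact in $S$). Hence $fr_S U$ is a finite union of compact connected sets, so it is compact, and since a finite union of compacta has only finitely many connected components, $fr_S U$ has finitely many components.

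Conversely, suppose that $U$ is open and connected in $S$ and that $fr_S U$ is compact with finitely many connected components. Put $K:=fr_S U = cl_S U - U$; this is compact with finitely many connected components by hypothesis. We check that $U$ is a connected component of $S-K$. First, $U\cap K=\emptyset$ and $U$ is connected, so $U$ lies in some connected component $\Omega$ of $S-K$. On the other hand, $cl_S U = U\cup fr_S U = U\cup K$, so
$$
cl_{S-K} U = (cl_S U)\cap(S-K) = (U\cup K)\cap(S-K) = U,
$$
which shows that $U$ is closed in $S-K$. Since $U$ is also open in $S-K$ and nonempty, and $\Omega$ is connected with $U\subset\Omega$, we conclude $U=\Omega$. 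Thus $U$ is a residual domain of the compact set $K$, which has finitely many connected components.

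\end{proof}
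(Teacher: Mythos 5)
Your converse direction (showing $U$ is a residual domain of $fr_S U$ by checking it is open and closed in $S-fr_SU$) is correct and fills in a step that the paper states but does not justify, so that part is fine.

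The forward direction is the one with a problem, and the problem is exactly the reduction you flag as "the only genuinely delicate point." You replace $K$ by $K' := K\cup U' = S-U$ and claim $K'$ is compact. This is false in general. If $S$ is non-compact, $K$ may have several unbounded residual domains; take for instance $S=\SS^1\times\re$ and $K=\SS^1\times\{0\}$, where both residual domains are unbounded. If $U$ is one of them, then $K'=S-U$ contains the other unbounded residual domain in its entirety, and so $K'$ is not compact. Your justification — that $K'$ is contained in $F_0$ together with finitely many residual domains "intersected with $F_0$" — overlooks that $S-U$ contains those unbounded residual domains whole, not merely their intersections with $F_0$. (The analogous reduction in the proof of Proposition~\ref{P15} is legitimate precisely because there $S$ is compact.)

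The good news is that the reduction is unnecessary: Proposition~\ref{P17} is stated for any residual domain $U$ of a compact set $K$ with finitely many components, with no hypothesis that $K\cup U=S$. In particular one always has $fr_S U\subset K$ for a residual domain $U$ of $K$ (any frontier point of $U$ that lay in $S-K$ would lie in a component of $S-K$ other than $U$, which is open and disjoint from $U$, a contradiction), so there is nothing to reduce. Applying item~\eqref{l173} directly to the original $K$ gives $fr_S U=\bigcup_{i=1}^m Z(b_i)$ with finitely many terms by item~\eqref{l171}, each a nonempty compact connected set, and the conclusion follows as you wrote. Delete the reduction paragraph and the proof is correct and matches the paper's.
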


\pagebreak
\begin{proof}\quad

If $fr_SU$ is a compact set with finitely many connected components,
then $U$ is a residual domain of $S-fr_SU$.
Conversely, 
if $U$ is a residual domain of a compact set $K$
that has finitely many connected components,
then by item~\eqref{l173} of Proposition~\ref{P17}, $fr_SU=\cup_bZ(b)$,
where the union is taken over the finite set
of relatively compact ideal boundary points $b$ of $U$.

\end{proof}

This result is false if $S$ is not a surface.
Consider the following subset of $\re^2$:
$$
X=([0,1]\times\{0,2\})\cup(C\times[0,2]),
$$
where $C$ is an infinite closed subset of $[0,1]$.
If $K=\{(x,y)\in X\;:\;y\le 1\}$,
then $K$ is compact connected,
$U=X-K$ is connected,
but $fr_XU=C\times \{1\}$.

\section{The accumulation lemma.}

Let $S$ be a connected surface without boundary and let $S_0\subset S$
be an open subset with $fr_S S_0$ compact.
Let $\mu$ be a Borel measure in $S$ which  is finite on compact sets
and positive on open non empty sets. We call $\mu$ the area measure.
Let $f:S_0\to S$ be an injective area preserving homeomorphism of $S_0$
onto an open subset $f(S_0)\subset S$. Let $K\subset S_0$ be a connected
compact subsets such that $f(K)=K$.

 There is a natural inclusion $b(S)\subset b(S-K)$ obtained as follows.
 If $(V_n)$ is an ideal boundary component for $q\in b(S)$, since $K$ is compact,
 there is $n_0$ such that   $V_n \cap K=\emptyset$ for all $n>n_0$.
 Then $(V_n)_{n>n_0}\in b(S-K)$.
% Let $p\in b(S-K)$, if $p\in b(S)$ then $Z(p)=p$. 
% Otherwise $p\in b(S-K)-b(S)$ and $Z(p)\subset K$.
% This follows from Proposition~\ref{P17}.\eqref{l172}.
  
  Let $p\in b(S-K)$,  from Proposition~\ref{P17}.\eqref{l172} we have that
   \renewcommand{\theenumi}{\theequation}
 \def\iitem{\refstepcounter{equation}\item}
  \begin{enumerate}
  \iitem  If $p\in b(S)$ then $Z(p)=p$. 
  \iitem\label{ZpK} Otherwise $p\in b(S-K)-b(S)$, $Z(p)\subset K$ and $p$ is relatively compact.
  \end{enumerate}

 We claim that the map $f$ naturally defines an injective  map
$$
f_*:b(S-K)-b(S)\to b(S-K)-b(S).
$$
Indeed, if $p=(V_n)\in b(S-K)-b(S)$ 
then $Z(p)\subset K$, therefore for all $n$, $cl_S V_n\cap K\ne\emptyset$.
Since $fr_S S_0$ is compact 
and $fr_S S_0\cap K=\emptyset$,
$\exists n_1, \; \forall n>n_1, \;V_n\cap fr_S S_0=\emptyset$.
Since $V_n$ is connected and $K\subset S_0$, we obtain that $\forall n>n_1$,
 $V_n\subset S_0$.
Therefore $fV_n$ is defined for $n>n_1$ 
and $(fV_n)_{n>n_1}$ is an ideal boundary component for a 
point $f_*(p)$ which is  in $b(S-K)-b(S)$, because $Z(f_*(p))=f(Z(p))\subset K$.
The injectivity of $f$ implies that $f_*$ is injective.

\begin{Proposition}\label{P51}
For any $p\in b(S-K)-b(S)$ there is $n$ such that $f_*^n(p)=p$.
\end{Proposition}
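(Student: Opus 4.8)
The plan is to show that the injective self-map $f_*$ on the set $b(S-K)-b(S)$ is in fact a bijection of a \emph{finite} set, which immediately forces every point to be periodic. The finiteness is where the work of Section~3 pays off. By Corollary~\ref{C325}.\eqref{c3251}, $b(S-K)-b(S)$ is discrete; but discreteness alone does not give finiteness. I would instead argue as follows. Each $p\in b(S-K)-b(S)$ has $Z(p)\subset K$ and is relatively compact, so by Proposition~\ref{P17}.\eqref{l171} (applied to the residual domain $U=U_p$ of $K$ containing $p$) there are only finitely many such points inside each $U$; and since the union $K\cup(\text{bounded residual domains of }K)$ is compact (Proposition~\ref{P13}), only finitely many residual domains of $K$ meet a fixed compact neighborhood $F_0$ of $K$ with $K\subset F_0^\circ$. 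Every ideal boundary point in $b(S-K)-b(S)$ is represented, after passing to a tail, by components of $S-F_n$ sitting inside such a bounded residual domain, so $b(S-K)-b(S)$ is finite. Call this set $E$ and let $N=\#E$.

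Next I would check that $f_*\colon E\to E$ is a \emph{bijection}, not merely an injection. Injectivity is already established in the excerpt from injectivity of $f$. For surjectivity one uses the area measure: $\mu$ is $f$-invariant, finite on compacts, and positive on nonempty open sets. The natural idea is to attach to each $p\in E$ a "mass": since $p$ is relatively compact with impression $Z(p)\subset K$, one can choose a representing ideal boundary component $(V_n)$ with $\mathrm{cl}_SV_n$ compact, and the decreasing sets $V_n$ each have finite positive $\mu$-measure. But the $\mu(V_n)\to 0$ in general, so a direct mass argument needs care; the cleaner route is simply the finiteness of $E$: an injective self-map of a finite set is automatically a bijection. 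So surjectivity is free once finiteness is in hand, and I would state it that way.

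Finally, $f_*$ being a bijection of the $N$-element set $E$ is a permutation, hence $f_*^{N!}=\mathrm{id}$ on $E$ (or: the orbit of any $p$ is a finite cycle, so $f_*^n(p)=p$ for some $n\le N$). This gives the claimed conclusion that for every $p\in b(S-K)-b(S)$ there is $n$ with $f_*^n(p)=p$.

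The main obstacle I anticipate is the finiteness of $E=b(S-K)-b(S)$: one must be careful that although $S-K$ may have infinitely many components and $b(S-K)$ may be infinite, only finitely many residual domains of $K$ are \emph{unbounded} (Claim~\ref{24c0} and the discussion after it) and only finitely many bounded residual domains exist near $K$ with positive "size," and that the relatively compact ideal boundary points all live in the finitely many residual domains meeting $F_0$, each contributing finitely many by Proposition~\ref{P17}.\eqref{l171}. Pinning down that every $p\in E$ really is accounted for this way — i.e. that no relatively compact ideal boundary point of $S-K$ can "escape" to infinitely many distinct residual domains — is the step that requires the structural results of \S3 rather than a soft argument. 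Once that is secured, the rest is the elementary observation that an injective self-map of a finite set is a bijection, hence periodic.
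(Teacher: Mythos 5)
Your proposal hinges on the claim that $E = b(S-K)-b(S)$ is finite, but that claim is false in general, and the paper itself warns against it: right before Corollary~\ref{C325} it notes explicitly that ``the set $S-K$ may have infinitely many components and $b(S-K)-b(S)$ may be infinite.'' Your argument that ``only finitely many residual domains of $K$ meet a fixed compact neighborhood $F_0$'' does not hold: Proposition~\ref{P13} says that $K$ together with its bounded residual domains is \emph{compact}, not that there are finitely many of them. A connected compact $K$ can easily have infinitely many bounded residual domains, all packed inside $F_0$ (e.g.\ a comb-like continuum in the plane), and each contributes at least one relatively compact ideal boundary point to $b(S-K)-b(S)$. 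What Proposition~\ref{P17}.\eqref{l171} and Corollary~\ref{C325}.\eqref{c3252} give you is finiteness \emph{per residual domain}, and discreteness of $E$, but not finiteness of $E$. So the reduction to ``injective self-map of a finite set is a permutation'' is not available.

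The step that saves the argument — and is the real content of the paper's proof — is the measure. One picks $\ve>0$ to be the minimum area of the finitely many residual domains not contained in the compact neighborhood $N$ of $K$. Then there are only finitely many residual domains of area $\ge\ve$. For a residual domain $U$ of area $<\ve$, the set map $f$ itself (not just $f_*$) is defined on $U$, sends $U$ to another residual domain of area $<\ve$ contained in $N$, and by finiteness of $\mu(N)$ together with injectivity, some iterate satisfies $f^n(U)=U$; consequently $f_*^n$ permutes the finite set $b(U)$, and all of its points are periodic. This handles the possibly infinite collection of small domains all at once, without ever needing $E$ itself to be finite. Only after that does one restrict attention to the finitely many remaining ideal boundary points (those of the large-area domains) and run the finite-permutation argument you had in mind — applied to the set $Q$ of not-yet-periodic points, which is now provably finite. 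Without the area-based step, the proposal has a genuine gap.
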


\begin{proof}
\quad

By Proposition~\ref{P13}, there exists a compact
bordered surface $N$ which is a neighborhood 
of $K$ in $S_0$, such that all relatively compact residual
domains in $S_0-K$ are contained in $N$.
The residual domains in $S-K$ which are not contained
in $N$, contain a component of $\partial N$.
Then there are only finitely many of them.
Let $\ve$ be the minimum of the area of the 
residual domains in $S-K$ which are not contained in $N$.
Then $\ve>0$.

Since $\ve>0$ and the area of $N$ is finite, there are only finitely 
many residual domains in $S-K$ with area $\ge\ve$ which are contained in $N$.
The number of residual domain not contained in $N$ is finite. 
Therefore the number of residual domains in $S-K$ with area $\ge \e$
is finite.

If $U$ is a residual domain in $S-K$ with area $<\e$. Then $U$ is contained in 
$N\subset S_0$ and hence it is relatively compact in $S_0$ and $fr_SU\subset K$.
The map $f$ is defined in $U$ and $f(U)$ is a connected component of $f(S_0)-K$
which is relatively compact in $f(S_0)$. Then the frontier $fr_Sf(U)\subset K$.
This implies that $f(U)$ is a connected component of $S-K$. Since $f$ preserves
area, we have that $f(U)$ is a residual domain in $S-K$ with area $<\e$, and then
$f(U)\subset N\subset S_0$. Inductively, all the iterates $f^n$ are defined in $U$ and
$f^n(U)$ is a residual domain in $S-K$ with $f^n(U)\subset N$. Since the area of $N$
is finite, we have that there is $n>0$ such that $f^n(U)=U$. This implies that the map
$f^n_*$ sends $b(U)$ to itself.

If $U$ is a residual domain in $S-K$  with area $<\ve$ then it is relatively
compact in $S$ and hence by  Proposition~\ref{P17}.\eqref{l171},
$b(U)$ is finite. Since $b(U)$ is a periodic set for $f_*$ and $f_*$ is injective,
we obtain that all points in $b(U)$ are periodic for $f_*$.

 There are only finitely many residual domains $V$ in $S-K$ of area $\ge\ve$ 
 and by 
 \linebreak
 Corollary~\ref{C325}.\eqref{c3252} for each one
 $b(V)-b(S)$ is finite. 
 Therefore the set  $Q\subset b(S-K)-b(S)$ of points which are not periodic for $f_*$
 is at most finite. The injectivity of $f_*$ implies that $f_*(Q)\subset Q$.
 Since $f_*$ is injective, $Q$ is finite and $f_*(Q)\subset Q$, 
 we have that $f_*$ is a bijection, i.e. a permutation of $Q$. Then all points in $Q$ are periodic, i.e. $Q=\emptyset$,
  and all points in $b(S-K)-b(S)$ are periodic for $f_*$.
 
 \end{proof}

 We will need the following result.
 In its proof we refer to the canonical exhaustion of $U$ from subsection~\S~\ref{canonical}
 which is also described in Proposition~\ref{P16}.

\begin{Proposition}\label{P21}\quad

Let $U$ be a residual domain of $K$ and 
$\a:]0,1]\to U$ a path such that
$$\lim\nolimits_{t\to 0}\a(t)=p\in K\quad \text{ in }S.
$$
Then there exists a relatively compact 
ideal boundary point $b$ of $U$
such that 
$$
\lim_{t\to 0}\a(t)=b\quad\text{ in }B(U).
$$

\end{Proposition}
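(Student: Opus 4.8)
The plan is to use the canonical exhaustion $(G_n)$ of $U$ from subsection \S\ref{canonical} (summarized in Proposition~\ref{P16}) together with the decomposition $U = U_+ \cup U_-$, where $U_- = U \cap F_0$ and $U_+ = U - F_0^\circ$, chosen so that $F_0^\circ \supset K$ and contains all relatively compact residual domains of $K$. Since $\lim_{t\to 0}\a(t) = p \in K \subset F_0^\circ$, the tail of the path $\a$ is eventually trapped inside $U_- = U \cap F_0$: more precisely, because $\a(t) \to p$ and $p$ lies in the interior of $F_0$, there is $t_0 > 0$ with $\a(t) \in U_- $ for all $t \le t_0$. Likewise, for every $n$ the compact set $G_n$ cannot contain $p$ on its frontier in the relevant sense — indeed $\operatorname{fr}_S U \subset K \subset F_0^\circ$ and $p \in \operatorname{fr}_S U$, while $p \notin G_n$ since $p \notin U$ — so for each $n$ there is $t_n > 0$ such that $\a(t) \notin G_n$ for $t \le t_n$; combining, $\a(t) \in U_- - E_n$ for $t \le t_n$ (using $U_- \cap G_n \subset E_n$, as in Lemma~\ref{L16}).

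Next I would organize the components. By Proposition~\ref{P16}\eqref{p16bb}, $b(U_-)$ is a finite set, say $\{b_1,\dots,b_m\}$, and there is $n_0$ such that for $n \ge n_0$ the number of connected components of $U_- - E_n$ is exactly $m$; label them $V_n^1,\dots,V_n^m$, chosen coherently so that $(V_n^i)_{n\ge n_0}$ represents $b_i$ (as in the proof of item~\eqref{l173} of Proposition~\ref{P17}). For each $n \ge n_0$, the connected path-image $\a(]0,t_n])$ lies in $U_- - E_n$, hence in a single one of the components $V_n^{i(n)}$. Because the $(V_n^i)$ form decreasing sequences nested compatibly with $i$, the index $i(n)$ is eventually constant in $n$: once $\a(]0,t_N])$ lies in $V_N^{i_0}$ for some large $N \ge n_0$, it lies in the unique component of $U_- - E_n$ contained in $V_N^{i_0}$ for all $n \ge N$, namely $V_n^{i_0}$. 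Set $b := b_{i_0} \in b(U_-) \subset b(U)$, which is relatively compact in $S$ by Proposition~\ref{P16}\eqref{p16bb}.

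Finally I would verify convergence in $B(U)$. By Proposition~\ref{P16}\eqref{p16bd} (an application of Proposition~\ref{P12}), the sets $(V_n^{i_0})^* = V_n^{i_0} \cup \{b\}$ form a fundamental system of neighborhoods of $b$ in $B(U)$. Given such a basic neighborhood $(V_n^{i_0})^*$, choose $n$ large enough that the nesting argument above applies; then for $t$ small (say $t \le t_n'$ for a suitable $t_n' > 0$, possibly shrinking $t_n$ to account for $V_n^{i_0}$ being the specific component), $\a(t) \in V_n^{i_0} \subset (V_n^{i_0})^*$. This shows $\a(t) \to b$ in $B(U)$ as $t \to 0$, which is the assertion.

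The main obstacle I anticipate is the bookkeeping that pins down the single relatively compact ideal boundary point $b$: one must argue that the connected tail of $\a$ selects one component $V_n^{i}$ of $U_- - E_n$ for each $n$, and that these selections are compatible across $n$ so that they assemble into a genuine ideal boundary component $(V_n^{i_0})_{n \ge n_0}$ — rather than, say, the path wandering between different components as $t$ decreases. This is handled by the facts that $\a$ restricted to each $]0,t_n]$ has connected image and that the components $V_n^i$ for varying $n$ but fixed limiting behavior are nested (each component of $U_- - E_{n+1}$ lying in a unique component of $U_- - E_n$), forcing the index to stabilize. Everything else is a matter of invoking Proposition~\ref{P16} and the structure of the canonical exhaustion, plus the elementary observation that $p \in F_0^\circ$ keeps the tail of the path inside $U_-$.
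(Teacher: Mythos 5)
Your proposal is correct and follows essentially the same route as the paper's proof: locate the tail of $\a$ in $U_-$ because $p\in K\subset F_0^\circ$, use the finiteness of $b(U_-)$ and the exhaustion $(E_n)$ of $U_-$ to see the tail eventually lies in a single component $V_n^{i_0}$ of $U_--E_n$ (with the index stabilizing by connectedness and nesting), and then conclude convergence via the neighborhood basis $(V_n^{i_0})^*=V_n^{i_0}\cup\{b_{i_0}\}$ of $b_{i_0}$ in $B(U)$. The only cosmetic differences are that you route the ``escape from $E_n$'' step through $G_n$ (equivalently one can argue directly with $E_n$, as the paper does, since $E_n$ is compact and $p\notin E_n$), and the paper phrases the final step by computing $\bigcap_n cl_{B(U)}\a(]0,t_{n+1}[)=\{b_{i_0}\}$ rather than quoting the fundamental system of neighborhoods, but these are the same argument.
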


\begin{proof}\quad

Observe that $U_-$ contains a relative neighborhood of $K\subset fr_S U_-$ inside $U_-$.
Then there exists $\tau>0$ such that $\a(t)\in U_-$ for all $t<\tau$.
As in the proof of item~\eqref{l173}  of Proposition~\ref{P17},
if $b_1,\ldots,b_m$ are the relatively compact
ideal boundary points of $U$,
then there exists $n_0$ such that  
for $n\ge n_0$ the number of components
of $U_--E_n$ is $m$, and for each
$i=1,\ldots,m$, the collection
$(V^i_n)_{n\ge n_0}$ is an ideal
boundary component of $U$
that represents $b_i$,
where $V^1_n,\ldots,V^m_n$
are the components of $U_--E_n$.
There exist $0<t_0<\tau$ and $i_0$ such that 
the curve
$\a(]0,t_0[)\subset V^{i_0}_{n_0}$.
For every $n>n_0$ we have that 
$V^{i_0}_n$ is the only
component of $U_--E_n$ contained in $V^{i_0}_{n_0}$,
and therefore there exists a sequence
$t_n\downarrow 0$
such that $\a(]0,t_n[)\subset V^{i_0}_n$.
For $n>n_0$, we have that 
$$
cl_{B(U)}\a(]0,t_{n+1}[)\subset cl_{B(U)}V^{i_0}_{n+1}\subset (V^{i_0}_n)^*:=V^{i_0}_n\cup\{b_{i_0}\},
$$
implying that
 $$
 \cap_{n> n_0}cl_{B(U)}\a(]0,t_{n+1}[)\subset \cap_{n>n_0}(V^{i_0}_n)^*=\{b_{i_0}\}.
 $$
 From this we conclude that $\lim_{t\to 0}\a(t)=b_{i_0}$ in $B(U)$.

\end{proof}

 Let $S$ be a connected surface with 
  compact boundary  and  $S_0\subset S$ be an open subset with $fr_S S_0$ compact.
 Let $f:S_0\to S$ be an area preserving 
 homeomorphism onto its image.
Let $p\in\text{Fix}(f^n)$ be a periodic point of $f$, possibly in the boundary $p\in \partial S_0$.
We say that $p$ is a saddle point if there is an open neighborhood $V$
of $p$ in $S_0$ and a continuous system of coordinates in $V$ such that 
in these coordinates we have that $f^n(x,y)=(\la x, \la^{-1} y)$ for some
$\la>1$. In this case the {\it branches} of $p$ are the connected
components of $W^s(p,f^n)- \{p\}$ and of $W^u(p,f^n)-\{p\}$,
where
\begin{align*}
W^s(p,f^n)&=\textstyle\big\{\,y\in S:\lim_{k\to +\infty} f^{nk}(y)=p\,\big\},
\\
W^u(p,f^n)&=\textstyle\big\{\,y\in S:\lim_{k\to-\infty}f^{nk}(y)=p\,\big\}
\end{align*}
are the {\it stable} and {\it unstable manifolds of $p$}.
The {\it branches of $f$} are the branches of its saddle periodic points.

In order to use the previous results for surfaces without boundary we define 
the {\it double} of a surface with boundary. 
Let  $\sim$ be the equivalence relation defined by the partition of 
$S\times\{0,1\}$ into one point sets $\{(p,i)\}$ if $p\notin\partial S$, $i\in\{0,1\}$
and two point sets $\{(p,0),(p,1)\}$ if $p\in\partial S$.
Let $\ov S = S\times\{ 0, 1\}/\sim$, provided with the quotient topology.
Then $\ov S$ is a surface without boundary and $\ov S_0 =S_0\times\{0,1\}/\sim$
is an open subset of $\ov S$.

Let $\nu$ be the measure with $\nu(\{0\})=\nu(\{1\})=1$ on $\{0,1\}$.
Extend the measure $\mu$ in $S$ to the measure $\ov \mu$ given by the 
push forward of the measure $\mu\times\nu$ on $S\times\{0,1\}$ by the quotient map.
Extend the map $f$ to $\ov S_0$ by $\ov f[(x,i)]=[(f(x),i)]$, where $[(x,i)]$ is the class of $(x,i)$.
Then $\ov f$ is an area preserving homeomorphism of $\ov S_0$ onto an 
open subset of $\ov S$. If $p$ is a saddle point for $f$ in $S$ and $i\in\{0,1\}$,
then the point $[(p,i)]\in\ov S$ is a saddle point for $\ov f$.

\begin{Theorem}[The accumulation lemma]\label{P22}\quad

 Let $S$ be a  connected surface with compact boundary provided with 
a Borel measure $\mu$ such that open non-empty subsets have positive
measure and compact subsets have finite measure.
Let $S_0\subset S$ be an open subset with $fr_S S_0$ compact.

Let $f, f^{-1}:S_0\to S$ be an area preserving homeomorphism of $S_0$
onto  open subsets $f(S_0)$, $f^{-1}(S_0)$ of $S$.
Let $K\subset S_0$ be a compact connected  invariant subset of $S_0$.

If $L\subset S_0$ is a branch of $f$ and $L\cap K\ne \emptyset$,
then $L\subset K$.
\end{Theorem}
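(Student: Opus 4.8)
The plan is to reduce to the boundaryless case via the double $\ov S$ and then argue by contradiction using the periodic structure of ideal boundary points established in Proposition~\ref{P51}. First I would pass to $\ov S$, $\ov S_0$, $\ov f$, $\ov\mu$ and the image of $K$, noting that $L$ becomes a branch of a saddle point $\ov p$ of $\ov f$, that $K$ stays compact, connected and invariant, and that $L\cap K\ne\emptyset$ is preserved; so it suffices to prove the theorem when $S$ has no boundary. Replacing $f$ by a suitable power $f^n$, we may assume $p$ is a fixed saddle point and $L$ is an invariant branch, say $L\subset W^u(p)$ with $f(L)=L$ (the stable case is symmetric, using $f^{-1}$).

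Next I would set up the contradiction hypothesis: suppose $L\not\subset K$. Since $L\cap K\ne\emptyset$ and $L$ is connected, $L$ must meet $fr_S K$, hence $L$ meets $S-K$ in a nonempty set; let $U$ be a residual domain of $K$ (a connected component of $S-K$) with $L\cap U\ne\emptyset$. The key is to produce an end of $U$ coming from $L$. Parametrize $L$ as the image of an injective curve $\gamma:[0,\infty)\to S$ with $\gamma(0)$ close to $p$, $f(\gamma(t))=\gamma(\lambda t)$ in the linearizing coordinates near $p$, so that $\gamma(t)\to p\in K$ as $t\to 0^+$ along a subarc; more usefully, the branch $L$ accumulates on $K$ (since $L\cap K\ne\emptyset$ and $L$ is invariant under the area-preserving $f$, standard arguments show $cl_S L$ meets $K$ in an invariant set). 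The idea is that a subarc of $L$ running into $K$ through $U$ realizes a relatively compact ideal boundary point $b$ of $U$: by Proposition~\ref{P21}, any path in $U$ converging in $S$ to a point of $K\subset fr_S U_-$ converges in $B(U)$ to one of the finitely many relatively compact ideal boundary points $b\in b_-(U)$, and by Proposition~\ref{P16}\eqref{p16bd} such $b$ has a disk neighborhood $V^*=V\cup\{b\}$ in $B(U)$ where $V$ is a component of $U_--E_n$.

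The heart of the argument is then to combine the dynamics of $f_*$ on $b(S-K)-b(S)$ with the flow-box picture of the branch near $b$. By Proposition~\ref{P51}, every point of $b(S-K)-b(S)$ is periodic for $f_*$; in particular the ideal boundary point $b$ of $U$ produced above is $f_*$-periodic, so after replacing $f$ by a power we may assume $f_*(b)=b$ and $f(U)=U$. Now one works inside the disk $V^*\cong\re^2$ with $b$ corresponding to $\infty$ (or the center): the branch $L\cap U$ enters this disk, and invariance $f(L)=L$ together with the linearized dynamics at the saddle forces $L$ to wind so as to fill a neighborhood of $b$, because the $f$-images of an arc of $L$ escaping to $b$ must again escape to $b$ and accumulate; an area argument (as in Franks--Le~Calvez~\cite{FLC} Lemma~6.1, now localized to the disk $V^*$) shows that the closure of the branch near $b$ contains points of $fr_S U$, i.e.\ of $K$, in a way that forces $L$ to stay in $K$ — contradicting $L\cap U\ne\emptyset$. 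Concretely: if $\bar L$ near $b$ had an accessible boundary point in $U$, iterating $f$ produces infinitely many disjoint copies of a fixed-area region, contradicting finiteness of $\mu$ on the compact neighborhood $N$ of $K$; hence $L$ cannot escape $K$ at all, so $L\subset K$.

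\textbf{Main obstacle.} The hard part will be the last step: rigorously showing that an invariant branch $L$ that both meets $K$ and meets a residual domain $U$ must converge to a single relatively compact ideal boundary point $b$ of $U$ and then deriving a contradiction from the area-preserving dynamics localized in the disk neighborhood $V^*$ of $b$. This requires care in (i) choosing the right arc of $L$ and applying Proposition~\ref{P21}, (ii) reducing to $f_*(b)=b$, $f(U)=U$ via Proposition~\ref{P51} and a further passage to a power of $f$, and (iii) running the Franks--Le~Calvez--style recurrence/area argument inside $V^*\cong\re^2$, where the simple connectivity of the disk makes the classical sphere argument applicable. The transition between ``$L$ meets $K$'' (a statement in $S$) and ``$L$ defines an end of $U$'' (a statement in $B(U)$) is exactly what Propositions~\ref{P16}, \ref{P17}, \ref{P21} and \ref{P51} were built to handle, so I expect the proof to be a careful assembly of these, with the genuine dynamical content confined to the final planar area estimate.
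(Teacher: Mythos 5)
Your outline correctly identifies the architecture of the paper's proof — double the surface to remove the boundary, localize to a residual domain $U$, invoke Propositions~\ref{P21}, \ref{P51}, \ref{P16}, \ref{P17} to produce a relatively compact ideal boundary point $b$ of $U$ with a disk neighborhood $V_k^*$ in $B(U)$ on which $f_*$ acts and fixes $b$, then derive a contradiction from area preservation inside that disk. But the step you yourself flag as the ``main obstacle'' is exactly where your heuristics go wrong. The argument does not use a linearized flow-box or ``winding'' picture near the saddle $p$; the single dynamical input that drives everything is that, for an arc $\be\subset L$ with endpoints in $K$ and $\be^\circ\subset U$, the stable-branch property gives $\diam(f^n\be)\to 0$ (take $L$ stable, replacing $f$ by $f^{-1}$ otherwise). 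This forces $f^n\be^\circ\subset V_k$ for all large $n$, and — crucially — forces the two relatively compact ideal boundary points $b_0,b_1$ that Proposition~\ref{P21} attaches to the two ends of $\be^\circ$ to coincide, so that $f_*^n\be_*$ (with $\be_*=cl_{B(U)}\be^\circ$) becomes a Jordan curve in the disk $V_k^*$ bounding a component $A_n$ of $V_k^*-f_*^n\be_*$ that is an open ball. Your sketch never sets up the arc $\be$, never uses $\diam(f^n\be)\to 0$, and never observes the collapse $b_0=b_1$; without these one does not get the Jordan curves $f_*^n\be_*$ on which the whole area argument lives.

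The final contradiction is also of a different shape than you propose. It is not ``infinitely many disjoint copies of a fixed-area region'' — the sets $A_n$ are not disjoint. One proves the inclusion $f_*(A_n)\subset A_{n+1}$ by a connectedness argument whose only alternative, $f_*(A_n)\supset B_{n+1}$ (the complementary component of a fixed disk $D_0$ cut off by $f_*^{n+1}\be_*$), is ruled out by the area comparison $\mu(f_*(A_n))=\mu(A_n)<\mu(B_{n+1})$, itself a consequence of $\mu(V_k^*)\to 0$. The inclusion makes $\mu(A_n)$ non-decreasing and bounded away from zero; on the other hand $A_n\subset V_k^*$ with $\mu(V_k^*)\to 0$ forces $\mu(A_n)\to 0$ along a subsequence. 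That nesting-versus-shrinking tension is the contradiction. A disjointness/total-measure estimate has no purchase here, and the inclusion $f_*(A_n)\subset A_{n+1}$ — which your plan has no mechanism to produce — is the irreplaceable ingredient.
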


 \begin{proof}\quad

By applying the theorem to the double of the surface $\ov S$ and $\ov S_0$, $\ov f$,
we can assume that $\partial S=\emptyset$.
Observe that $\ov S_0$ is an open subset of $\ov S$ and
$fr_{\ov S}\ov S_0$ is also compact.
We use $\ov L=\pi_0(L)$ and $\ov K=\pi_0(K)$, where 
$\pi_0:S\to\ov S$ is $\pi_0(x)=[(x,0)]$.
We may assume that $L$ is an invariant stable branch.
Observe that it is enough to prove the theorem for 
an iterate $f^q$ of $f$.

Suppose by contradiction that $L$ contains an arc $\be$
whose endpoints $p_0$ and $p_1$ belong to $K$
but its interior $\be^\circ$ satisfies $\be^\circ\cap K=\emptyset$.
We are going to think of $\be$ also as a path
$\be:[0,1]\to U$, with $\be(0)=p_0$ and $\be(1)=p_1$.
Since $L$ is invariant and $\be\subset L\subset S_0$, 
we have that $f^n\be$ is defined and 
$f^n\be\subset S_0$ for all $n$.

Let $U$ be the connected component of $S-K$ that contains $\be^\circ$.
By Proposition~\ref{P17}.\eqref{l172} the relatively compact ideal boundary 
points for $U$ are in $b(S-K)-b(S)$.
By Proposition \ref{P17}.\eqref{l171} they are finitely many and by Proposition~\ref{P51}
they are periodic points for $f_*$.
Therefore by taking an iterate $f^q$ if necessary,
we can assume that 
 $f_*(b)=b$ for any relatively compact ideal boundary point of $U$.

By Proposition~\ref{P21}, there exist two relatively
compact ideal boundary points of $U$, $b_0$ and $b_1$,
such that $\lim_{t\to i}\be(t)=b_i$ in $B(U)$, $i=0,1$.
If $\be_*=cl_{B(U)}\be^\circ$, then 
$\be_*=\be^\circ\cup\{b_0,b_1\}$.
By Proposition~\ref{P17}.\eqref{l171} we have that 
$b_0$ and $b_1$ are isolated points of $b(U)$.

Referring to the canonical exhaustion $(G_n)$ of $U$,
described in Proposition~\ref{P16}, \eqref{p16bb}, \eqref{p16b4},
the point
$b_0\in b(U)$ is represented by an ideal boundary component $(V_k)$ of $U$,
where $V_k$ is a component of $U_--E_k$
and $cl_SV_k$ is a bordered surface whose boundary contains 
only one component, which we denote by
$\partial(cl_U V_k)$.
By~\ref{P16}.\eqref{p16bb} or \ref{P17}.\eqref{l171},
for every $k$ sufficiently large,
 $b_0$ is the only ideal boundary point of $U$ contained in $cl_{B(U)}V_k$.
 So we assume that this happens for every $k$.
  Let $V^*_k:=V_k\cup\{b_0\}$. By~\ref{P16}.\eqref{p16bd}, the collection $(V^*_k)$
  is a fundamental system
 of neighborhoods of $b_0$ in $B(U)$,
$D_k:=cl_{B(U)}V^*_k$ 
 is a closed disk and
 $\partial D_k=\partial (cl_U V_k)$.

Since $fr_S S_0$ 
is compact and $K\subset  S_0$, we have that
$U\cap fr_S S_0$ is compact. Then
\begin{equation}\label{vkfrs0}
\exists k_0\quad \forall k> k_0\quad V_k\cap fr_S S_0=\emptyset.
\end{equation}
Using~\eqref{Zb} and~\eqref{ZpK} we have that 
\begin{equation}\label{ivk}
\emptyset\ne\textstyle\bigcap_k cl_{B(S)} V_k=Z(b_0)\subset K.
\end{equation}
 From~\eqref{ivk}, $\forall k$ $cl_S V_k\cap K\ne\emptyset$.
 Since $V_k$ is connected and $K\subset S_0$, from~\eqref{vkfrs0} we get that 
 \linebreak
 $\forall k> k_0$ $V_k\subset S_0$.
Fix $k_1$ such that $V_{k_1}\subset S_0$ and $D_0:=cl_{B(S)}V_{k_1}^*$ is a closed disk and
\begin{equation}\label{D0S}
\forall k\ge k_1\qquad V_k^*\subset int_{B(S)} D_0\subset V_{k_1}\cup\{b_0\}.
\end{equation}
Extend the measure $\mu$ to $B(U)$ by $\mu(b(U))=0$.
Then $f_*$ is defined in $D_0$, is continuous,  preserves area and $f_*(b_0)=b_0$.

By its definition in~\eqref{uu} we have that $U_-$ is relatively compact in $S$,
and hence it has finite measure. 
By Proposition \ref{P16}.\eqref{p16bb} $V_k\subset U_-$.
Since $\mu(U^*_-)=\mu(U_-)<\infty$, $V_k\subset U_-$ and $\cap_k V_k^*=\{b_0\}$, 
we have that 
\begin{equation}\label{Vn0}
\lim_{k\to\infty}\mu(V_k^*)=0.
\end{equation}

Let $d$ be a metric compatible with the topology of $S$.
Since $\be^\circ\subset L$,
if $\diam(f^n\be)$ is the diameter of $f^n\be$,
then $\lim_{n\to\infty}\diam(f^n\be)=0$.
Therefore
\begin{equation}\label{nk0}
\forall k\quad\exists n_k>k\quad \forall n\ge n_k\quad
\diam(f^n\be)<d(\partial(cl_U V_k),K),
\end{equation}
where $d(\partial(cl_U V_k),K)=\inf\{ d(x,y):x\in \partial(cl_UV_k),\,y\in K\}$.

Since $K$ is invariant, we have that $f^n\be(0)\in K$.
Therefore $f^n\be$ can not intersect
$\partial (cl_UV_k)$ if $n\ge n_k$.
Since $\lim_{t\to 0}f^n\be(t)=f^n_*(b_0)=b_0$ in $B(U)$,
we have that for $n\ge n_k$, $f^n\be(t)\in V_k$ for some $t>0$.
This implies by connectedness that $f^n\be^\circ=f^n\be(]0,1[)\subset V_k$
and hence $b_0=b_1$.

For  $k>k_1$, $n\ge n_k$ we have that $f^n_*\be_*$ 
is a simple closed curve contained in $V^*_k$
which separates $D_0$ and also $B(U)$ in two connected components.
Let $A_n$ be the component of $B(U)-f_*^n\be_*$ contained in $V^*_k$.
Let $B_n:= D_0-cl_{B(U)}A_n$ be the other component of $D_0-f_*^n\be_*$.
We have that $A_n$ is homeomorphic to an open ball.

Since
$\lim_n\diam f_*^n(\be)=0$ we have that $\lim_n \diam A_n=0$
and then 
\begin{equation}\label{fanh}
\exists k_2>k_1\quad\forall n\ge n_{k_2}\quad f_*:A_n\to f_*(A_n)\subset D_0 \text{ is a homeomorphism.}
\end{equation}

Since $L$ is a stable branch, the set $cl_S(\cup_{n\ge 0} f^n \be)$ is compact, 
and hence it has finite $\mu$-measure.
Since the points in  $fr_{U} A_n\subset f^n\be\subset L$ are wandering
we have that 
$$
\mu(fr_U A_n)\le \mu(f^n\be)=\mu(\be)=0.
$$
Observe that $fr_{B(U)} A_n=fr_U A_n \cup \{b_0\}$,
and hence  $\mu(fr_{B(U)} A_n)=0$.
Therefore 
\begin{equation}\label{dabn}
\mu(D_0-A_n)=\mu(D_0-cl_{B(U)} A_n)=\mu(B_n).
\end{equation}

Since $V_{k_1}\subset D_0$, by \eqref{Vn0},
\begin{equation}\label{vkdo}
\exists k_3>k_2\qquad \forall k\ge k_3\qquad \mu(V_k)<\mu(D_0-V_k).
\end{equation}

If $k\ge k_3$ and $n\ge n_{k}$,
   then $A_n\subset V_k^*$. Therefore by~\eqref{Vn0}
we can extract a subsequence $A_n$ such that $\mu(A_n)$ is decreasing
and
\begin{equation}\label{Anto0}
\lim_n\mu(A_n)=0.
\end{equation}
Using~\eqref{vkdo} and \eqref{dabn}, we have that 
$$
\forall n\ge n_{k_3}\qquad \mu(A_n)<\mu(D_0-A_n)=\mu(B_n)\le \mu(B_{n+1}).
$$

Using~\eqref{fanh} we have that for $n\ge n_{k_3}$,

\begin{enumerate}
\iitem\label{ee24} $f_*(A_n)$ is connected.
\iitem\label{ee25} $f_*(A_n)\subset B(U)-f^{n+1}\be_*$.
\iitem\label{ee19} $fr_{B(U)} f_*(A_n)\subset f_*(fr_{B(U)} A_n) =f_*^{n+1}\be_*$.
\iitem\label{e15} $\mu(f_*(A_n))=\mu(A_n)<\mu(B_n)\le \mu(B_{n+1})$.
\end{enumerate}
\begin{claim}\label{cc7}
If  $f_*(A_n)\not\subset A_{n+1}$  then 
$f_*(A_n)\cap D_0$ would contain $D_0-cl_{B(U)} A_{n+1}=B_{n+1}$.
\end{claim}
But the thesis in Claim~\ref{cc7} contradicts~\eqref{e15}. Therefore 
\begin{equation}\label{faan}
 f_*(A_n)\subset A_{n+1}.
\end{equation}

\noindent{\it Proof of Claim~\ref{cc7}:}\quad

Since $fr_{B(U)} A_{n+1}=f_*^{n+1}\be_*$, from~\eqref{ee24} and~\eqref{ee19} we have that 
either $f_*(A_n)\subset A_{n+1}$ or $f_*(A_n)\subset B(U)-A_{n+1}$.
We are assuming that $f_*(A_n)\not\subset A_{n+1}$ then $f_*(A_n)\subset B(U)-A_{n+1}$.
Moreover by~\eqref{ee25}, $f_*(A_n)\subset B(U)-cl_{B(U)} A_{n+1}$.
Since $b_0\in D_0\cap cl_{B(U)}f_*(A_n)$, there is
$$
x_0\in D_0\cap f_*(A_n)\subset D_0\cap f_*(A_n)-cl_{B(U)}A_{n+1}.
 $$
Given $x_1\in D_0-cl_{B(U)} A_{n+1}$ there is
 a path 
$\ga:[0,1]\to D_0 -cl_{B(U)} A_{n+1}$ with $\ga(i)=x_i$, $i=0,1$.
Let $t=\sup\{s\in[0,1]:\ga(s)\in f_*(A_n)\}$.
Since $x_0\in f_*(A_n)$, $t>0$.
If $t<1$ then by~\eqref{ee19},
 $$
 \ga(t)\in  fr_{B(U)}f_*(A_n)\subset f_*^{n+1}\be_*=fr_{B(U)}A_{n+1},
 $$
which contradicts the choice of $\ga$.
Therefore $x_1\in D_0\cap f_*(A_n)$.

\ted

From~\eqref{faan} and~\eqref{Anto0} we have that 
$$
\forall n>n_{k_3}\quad \mu(A_{n+1})\ge \mu(f_*(A_n))=\mu(A_{n})>0
\qquad\text{and}\qquad
 \lim_n \mu(A_n)=0.
$$
A contradiction.
This finishes the proof of Theorem~\ref{P22}.

\end{proof}

\nocite{Mat12}

%\bibliographystyle{amsplain}
% \bibliography{biblio}

\def\cprime{$'$} \def\cprime{$'$} \def\cprime{$'$} \def\cprime{$'$}
\providecommand{\bysame}{\leavevmode\hbox to3em{\hrulefill}\thinspace}
\providecommand{\MR}{\relax\ifhmode\unskip\space\fi MR }
% \MRhref is called by the amsart/book/proc definition of \MR.
\providecommand{\MRhref}[2]{%
  \href{http://www.ams.org/mathscinet-getitem?mr=#1}{#2}
}
\providecommand{\href}[2]{#2}

\end{document}